\newcommand{\excise}[1]{}
\newtheorem{theorem}{Theorem}[section]
\newtheorem{lemma}[theorem]{Lemma}
\newtheorem{corollary}[theorem]{Corollary}
\newtheorem{proposition}[theorem]{Proposition}
\theoremstyle{definition}
\newtheorem{example}[theorem]{Example}
\newtheorem{remark}[theorem]{Remark}
\newtheorem{definition}[theorem]{Definition}
\newtheorem{observation}[theorem]{Observation}
\newtheorem{notation}[theorem]{Notation}
\newenvironment{romanlist}
        {\begin{list}
                {\noindent\makebox[0mm][r]{(\roman{enumi})}}
                {\leftmargin=5.5ex \usecounter{enumi}}
        }
        {\end{list}}
\newcommand{\baseRing}[1]{\ensuremath{\mathbb{#1}}}
\newcommand{\Z}{\baseRing{Z}}
\newcommand{\C}{\baseRing{C}}
\newcommand{\N}{\baseRing{N}}
\newcommand{\R}{\baseRing{R}}
\newcommand{\Q}{\baseRing{Q}}
\def\<{\langle}
\def\>{\rangle}
\def\0{\mathbf{0}}
\def\CC{{\mathbb C}}
\def\EE{{\mathcal E}}
\def\FF{{\mathcal F}}
\def\HH{{\mathcal H}}
\def\KK{{\mathcal K}}
\def\PP{{\mathbb P}}
\def\RR{{\mathbb R}}
\def\SS{{\mathbb S}}
\def\TT{{\mathbb T}}
\def\ZZ{{\mathbb Z}}
\def\cC{{\mathcal C}}
\def\cG{{\mathrm \Delta}}
\def\cI{{\mathcal I}}
\def\cJ{{\mathcal J}}
\def\cP{{\mathcal P}}
\def\cR{{\mathcal R}}
\def\cS{{\mathcal S}}
\def\cZ{{\mathcal Z}}
\def\bbE{{\mathbb E}}
\def\dM{{\mathbb M}}
\def\fA{{\mathfrak A}}
\def\fC{{\mathfrak C}}
\def\fS{{\mathfrak S}}
\def\oalpha{\overline{\alpha}}
\def\del{\partial}
\def\qdeg{{\rm qdeg}}
\def\ext{\operatorname{Ext}}
\def\mor{\operatorname{Mor}}
\def\vol{{\rm vol}}
\def\rank{{\rm rank\ }}
\def\supp{\text{\rm supp}}
\def\image{\text{\rm image }}
\def\codim{\text{\rm codim}}
\def\nothing{\varnothing}
\def\ol#1{{\overline {#1}}}
\def\wt#1{{\widetilde {#1}}}
\def\dlim{{\lim\limits_{\raisebox{.2ex}
    {$\scriptstyle\longrightarrow$}}}}
\numberwithin{equation}{section}
\begin{document}%%%%%%%%%%%%%%%%%%%%%%%%%%%%%%%%
%%%%%%%%%%%%%%%%%%%%%%%%%%%%%%%%%%%%%%%%%

\title{The Rank of a Hypergeometric System}
\author{Christine Berkesch}
\email{cberkesc@purdue.edu}
\address{Department of Mathematics\\ Purdue University\\
West Lafayette, IN \ 47907.}
\thanks{This author was partially supported by NSF Grant DMS 0555319.}
\subjclass[2000]{
%Primary: 
33C70, 
%; Secondary: 
14M25, 16E30, 20M25, 13N10}
\keywords{hypergeometric, D-module, toric, 
	holonomic, rank, Euler--Koszul, combinatorial.}

\begin{abstract}
The holonomic rank of the $A$-hypergeometric system $M_A(\beta)$ 
is the degree of the toric ideal $I_A$ for generic parameters; 
in general, this is only a lower bound.
To the semigroup ring of $A$ we attach the ranking arrangement 
and use this algebraic invariant and the exceptional arrangement 
of nongeneric parameters to construct a combinatorial formula for the rank jump of 
$M_A(\beta)$. 
As consequences, we obtain a refinement of the stratification 
of the exceptional arrangement by the rank of $M_A(\beta)$ and 
show that the Zariski closure of each of its strata 
is a union of translates of linear subspaces of the parameter space. 
These results hold for generalized $A$-hypergeometric systems 
as well, where the semigroup ring of $A$ 
is replaced by a nontrivial weakly toric module $M\subseteq \C[\Z A]$. 
We also provide a direct proof of the main result in \cite{saito isom} regarding 
the isomorphism classes of $M_A(\beta)$.
\end{abstract}

\maketitle
\setcounter{tocdepth}{1}
\tableofcontents

%%%%%%%%%%%%%%%%%%%%%%%%%%%%%%%%%%%%%%%%%
\section{Introduction}%%%%%%%%%%%%%%%%%%%%%%%%%%%%%%
%%%%%%%%%%%%%%%%%%%%%%%%%%%%%%%%%%%%%%%%%
\label{sec:intro}

An $A$-hypergeometric system $M_A(\beta)$ is a $D$-module 
determined by an integral matrix $A$  
and a complex parameter vector $\beta \in\C^d$. 
These systems are also known as {\it GKZ-systems}, 
as they were introduced in the late 1980's by Gelfand, 
Graev, Kapranov, and Zelevinsky \cite{GGZ,GKZ}. 
Their solutions occur naturally in mathematics and physics,
including the study of roots of polynomials, 
Picard--Fuchs equations for the variation of
Hodge structure of Calabi-Yau toric hypersurfaces,
and generating functions for intersection numbers on moduli spaces of curves, 
see \cite{sturmfels-solving, BvS95, HLY96, Oko02}.

The (holonomic) rank of $M_A(\beta)$ coincides with 
the dimension of its solution space at a nonsingular point. 
In this article, we provide a combinatorial formula for the rank of $M_A(\beta)$ in 
terms of certain lattice translates determined by $A$ and $\beta$. 
For a fixed matrix $A$, 
this computation yields a geometric stratification of the parameter space $\C^d$ 
that refines its stratification by the rank of $M_A(\beta)$.  

\begin{notation}
Let $A=[ a_1 \ a_2 \ \cdots \ a_n]$ be an integer 
$(d\times n)$-matrix with integral column span $\Z A=\Z^d$.
Assume further that $A$ is {\it pointed}, 
meaning that the origin is the only linear subspace of the cone 
$\R_{\geq0}A= \{\sum_{i=1}^n \gamma_ia_i \mid
    \gamma_i \in\R_{\geq0}\}$.  

A subset $F$ of the column set of $A$ is called a {\it face} of $A$, 
denoted $F\preceq A$, if 
$\R_{\geq0}F$ 
is a face of the cone $\R_{\geq0}A$.

Let $x=x_1,\dots,x_n$ be variables and $\del=\del_1,\dots,\del_n$
their associated partial differentiation operators.
In the polynomial ring $R=\C[\del]$, let 
\begin{align*}
I_A = \<\del^u-\del^v \mid Au=Av,\ u,v\in\N^n \>\subseteq R 
\end{align*}
denote the toric ideal associated to $A$, and let 
$S_A = R/I_A$ be its quotient ring. 
Note that $S_A$ is isomorphic to the semigroup ring of $A$, which is 
\begin{align}\label{eq-semigroup ring}
S_A \cong 
\C[\N A] := \bigoplus_{a\in \N A} \C\cdot t^a 
\end{align} 
with multiplication given by semigroup addition of exponents. 
The Weyl algebra
\[
D = \C\<x,\del \mid [\del_i,x_j]=\delta_{ij}, [x_i,x_j]=0=[\del_i,\del_j]\>
\]
is the ring of $\C$-linear differential operators on $\C[x]$.
\end{notation}

\begin{definition}\label{def-gkz}
The {\it $A$-hypergeometric system} with parameter $\beta\in\C^d$ 
is the left $D$-module
\[
M_A(\beta) = D/D\cdot(I_A,\{E_i-\beta_i\}_{i=1}^d),
\]
where $E_i-\beta_i =\sum_{j=1}^na_{ij}x_j\del_j - \beta_i$
are \emph{Euler operators} associated to $A$.
\end{definition}

The {\it rank} of a left $D$-module $M$ is
\[
\rank M=\dim_{\C(x)}\C(x)\otimes_{\C[x]}M.
\]
The rank of a holonomic $D$-module is finite and equal to the 
dimension of its solution space of germs of holomorphic functions
at a generic nonsingular point \cite{Kashiwara}.

%%%%%%%%%%%%%%%%%%%%%%%%%%%%%%%%%%%%%%%%%
\subsection{The exceptional arrangement of a hypergeometric system}%%%%%%%
%%%%%%%%%%%%%%%%%%%%%%%%%%%%%%%%%%%%%%%%%
\label{subsec:background}

In \cite{GKZ}, Gelfand, Kapranov, and Zelevinsky showed that 
when $S_A$ is Cohen--Macaulay and standard $\Z$-graded, 
the $A$-hypergeometric system $M_A(\beta)$ is 
holonomic of rank $\vol(A)$ for all parameters $\beta$, 
where $\vol(A)$ is $d!$ times the Euclidean volume of 
the convex hull of $A$ and the origin.
Adolphson established further that $M_A(\beta)$ is holonomic 
for all choices of $A$ and $\beta$ and that the holonomic rank of 
$M_A(\beta)$ is generically given by $\vol(A)$ \cite{Ado}.
However, an example found by Sturmfels and Takayama showed that 
equality need not hold in general \cite{0134} (see also \cite{SST}). 
At the same time, Cattani, D'Andrea, and Dickenstein produced 
an infinite family of such examples through a complete investigation 
of the rank of $M_A(\beta)$ when $I_A$ defines a 
projective monomial curve \cite{CDD}.

The relationship between $\vol(A)$ and the rank of $M_A(\beta)$ 
was made precise by Matusevich, Miller, and Walther, who used 
Euler--Koszul homology to study the holonomic 
\emph{generalized $A$-hypergeometric system} $\HH_0(M,\beta)$ 
associated to a toric module $M$ (see Definition~\ref{def:toric}).
The Euler--Koszul homology $\HH_\bullet(M,\beta)$ 
of $M$ with respect to $\beta$ 
is the homology of a twisted Koszul complex 
on $D\otimes_R M$ given by the sequence $E - \beta$. 
This includes the $A$-hypergeometric system
$M_A(\beta) = \HH_0(S_A,\beta)$ as the special case that 
$M = S_A$. 
As in this special case, and for the purposes of this article, 
suppose that the generic rank of $\HH_0(M,-)$ is $\vol(A)$. 

The matrix $A$ induces a natural $\Z^d$-grading on $R$;
the {\it quasidegree set} of a finitely generated $\Z^d$-graded 
$R$-module $N$ is defined to be the Zariski closure in $\C^d$ of the set 
of vectors $\alpha$ for which the graded piece $N_\alpha$ is nonzero. 
In \cite{MMW}, an explicit description of the {\it exceptional arrangement}
\[
\EE_A(M) = \{\beta\in\C^d \mid \rank \HH_0(M,\beta)\neq\vol(A)\}
\]
associated to $M$ is given in terms of the quasidegrees of certain 
$\ext$ modules involving $M$ (see \eqref{13}). 
This description shows that $\EE_A(M)$ is a subspace arrangement in 
$\C^d$ given by translates of linear subspaces that are generated by 
the faces of the cone $\R_{\geq0}A$, and that $\EE_A(M)$ is empty exactly 
when $M\neq 0$ is a maximal Cohen--Macaulay $S_A$-module.
It is also shown in \cite{MMW} that the rank of $\HH_\bullet(M,\beta)$ is 
upper semi-continuous as a function of $\beta$. Thus 
the exceptional arrangement $\EE_A(M)$ is the nested union over 
$i\geq 0$ of the Zariski closed sets
\[
\EE_A^i(M) = \{\beta\in\C^d\mid \rank \HH_0(M,\beta)-\vol(A)> i\}. 
\]
In particular, the rank of $\HH_0(M,\beta)$ induces a stratification of $\EE_A(M)$, 
which we call its {\it rank stratification}.

%%%%%%%%%%%%%%%%%%%%%%%%%%%%%%%%%%%%%%%%%
\subsection{A homological study of rank jumps}%%%%%%%%%%%%%%%%%%
%%%%%%%%%%%%%%%%%%%%%%%%%%%%%%%%%%%%%%%%%
\label{subsec:hom study}

The present article is a study of the rank stratification of $\EE_A(M)$ 
when $M\subseteq S_A[\del_A^{-1}]$ is $\Z^d$-graded such that the 
degree set $\dM = \deg(M)$ of $M$ is a nontrivial $\N A$--monoid.  
In particular, $M$ is weakly toric (see Definition~\ref{def-wtoric}). 
If $\dM = \N A$, then $M$ is the semigroup ring $S_A$ from \eqref{eq-semigroup ring} 
and $\HH_0(M,\beta) = M_A(\beta)$ is the $A$-hypergeometric system at $\beta$. 
The module $M$ could also be a localization of $S_A$ along a  
subset of faces of $A$. 
As $M$ will be fixed throughout this article, 
we will often not indicate dependence on $M$ in the notation. 

Examination of the long exact sequence in Euler--Koszul homology
induced by the short exact sequence of weakly toric modules 
\begin{eqnarray*}
    0 \rightarrow M \rightarrow
    S_A[\del_A^{-1}] \rightarrow Q \rightarrow 0
\end{eqnarray*}
reveals that the {\it rank jump} of $M$ at $\beta$,
\begin{align}
j(\beta) & =\rank\HH_0(M,\beta)-\vol(A), \nonumber 
\intertext{can be calculated in terms of $Q$ by}
\label{eqn-Q}
j(\beta) & = \rank \HH_1(Q,\beta)-\rank \HH_0(Q,\beta).
\end{align}
We define the \emph{ranking arrangement} $\cR_A(M)$ of $M$ to be 
the quasidegrees of $Q$. 
Vanishing properties of Euler--Koszul homology imply that the 
exceptional arrangement $\EE_A(M)$ is contained in the 
ranking arrangement $\cR_A(M)$. 
We show in Theorem~\ref{thm-str comparison}
that $\cR_A(M)$ is the union of $\EE_A(M)$ 
and an explicit collection of hyperplanes. 

For a fixed $\beta\in\EE_A(M)$, we then proceed to compute $j(\beta)$. 
In Section~\ref{sec:r-t mods}, 
we combinatorially construct a \emph{finitely generated} $\Z^d$-graded 
\emph{ranking toric module} $P^{\beta}$ with 
$\HH_\bullet(Q,\beta) \cong \HH_\bullet(P^{\beta},\beta)$. 
Since $j(\beta)$ is determined by the Euler--Koszul homology of $Q$ 
by \eqref{eqn-Q}, we see that $P^{\beta}$ contains the information 
essential to computing the rank jump $j(\beta)$. 
To outline the construction of the module $P^{\beta}$, let 
\[
    \FF(\beta)
        = \{ F \preceq A \mid \beta + \C F \subseteq \cR_A(M) \}
\]
be the polyhedral complex of faces of $A$ determined by the components of 
the ranking arrangement $\cR_A(M)$ that contain $\beta$. 
We call the collection of integral points 
\[
\bbE^{\beta} = 
\Z^d \cap \bigcup_{F\in\FF(\beta)} (\beta+\C F)\setminus(\dM+\Z F) 
\]
the \emph{ranking lattices} $\bbE^{\beta}$ of $M$ at $\beta$. 
This set is a union of translates of lattices generated by faces of $A$, 
where the vectors in these lattice translates of $\Z F$ in $\bbE^{\beta}$ 
are precisely the degrees of $Q$ 
which cause $\beta + \C F$ to lie in the ranking arrangement. 
Since it contains full lattice translates, 
$\bbE^{\beta}$ cannot be the degree set of a finitely generated $S_A$-module. 
Thus, to complete the construction of the degree set $\PP^\beta$ of $P^{\beta}$, 
we intersect $\bbE^{\beta}$ with an appropriate half space (see Definition~\ref{def-CP}). 
To give a flavor of our approach for $\beta\in\R^d$, 
this is equivalent to intersecting $\bbE^\beta$ with 
$\cC_A(\beta) = \ZZ^d \cap [\beta + \R_{\geq0}A]$. 
By setting up the proper module structure, 
$\PP^{\beta} = \cC_A(\beta) \cap \bbE^\beta$ gives the 
$\Z^d$-graded degree set of the desired toric module $P^{\beta}$ with 
$j(\beta) = \rank \HH_1(P^{\beta},\beta)-\rank \HH_0(P^{\beta},\beta)$.
After translating the computation of the rank jump $j(\beta)$ to $P^{\beta}$, 
we obtain a generalization of the formula given by Okuyama 
in the case $d=3$ \cite{okuyama}. 

\begin{theorem}\label{thm-compute jump}
The rank jump $j(\beta)$ of $M$ at $\beta$ can be computed from 
the combinatorics of the ranking lattices $\bbE^{\beta}$ of $M$ at $\beta$.
\end{theorem}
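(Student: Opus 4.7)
The plan is to leverage the isomorphism $\HH_\bullet(Q,\beta) \cong \HH_\bullet(P^{\beta},\beta)$, combined with \eqref{eqn-Q}, to reduce the computation to
\[
j(\beta) = \rank\HH_1(P^{\beta},\beta) - \rank\HH_0(P^{\beta},\beta),
\]
and then to extract the right-hand side from the stratified structure of $\PP^{\beta} = \cC_A(\beta)\cap \bbE^{\beta}$. Since each piece of $\bbE^{\beta}$ is a lattice translate attached to a face $F\in\FF(\beta)$, the strategy is to break $P^{\beta}$ into subquotients supported on individual translates, compute Euler--Koszul homology face by face, and assemble via long exact sequences.

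First I would filter $P^{\beta}$ by a chain of $\Z^d$-graded toric submodules
\[
0 = P^{\beta}_{(-1)} \subseteq P^{\beta}_{(0)} \subseteq \cdots \subseteq P^{\beta}_{(\dim A)} = P^{\beta},
\]
where $P^{\beta}_{(k)}$ collects graded pieces of $P^{\beta}$ supported on faces $F\in\FF(\beta)$ of dimension at most $k$. Each successive quotient $P^{\beta}_{(k)}/P^{\beta}_{(k-1)}$ decomposes as a direct sum of toric modules, one for each $F\in\FF(\beta)$ with $\dim F = k$, whose degree set is exactly the portion of $\bbE^{\beta}$ coming from $F$ and lying inside the chosen half-space, minus contributions already picked up by proper subfaces. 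This is precisely the step at which the combinatorics of $\bbE^{\beta}$ enters: each summand is determined as an $S_A$-module by its lattice translate of $\Z F$, together with its intersection with $\cC_A(\beta)$.

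Next I would compute $\rank\HH_i$ of each face-summand separately. For a toric module supported on $\beta+\C F$, the Euler--Koszul complex splits into a Koszul piece along the face ring $S_F$ and a transverse piece along the remaining Euler operators. The vanishing results for Euler--Koszul homology of Cohen--Macaulay toric modules from \cite{MMW} collapse most of the homology, and the surviving pieces are dimensions of explicit graded components, i.e.\ counts of lattice points in $\bbE^{\beta}$ restricted to the appropriate shifted cone. Summing the alternating Euler--Koszul Euler characteristics along the filtration via the long exact sequences then telescopes to an expression for $j(\beta)$ that depends only on the combinatorics of $\bbE^{\beta}$.

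The main obstacle will be controlling the overlap between nested faces $F'\preceq F$ in $\FF(\beta)$: graded pieces along $\beta+\C F'$ lie in every stratum containing $F'$, so the face-summands must be defined with care to avoid double-counting, and one must check that the filtration quotients are themselves honest toric modules for which the Euler--Koszul formalism applies. A related subtlety is showing that the resulting formula does not depend on the auxiliary half-space chosen in Definition~\ref{def-CP}; this will rest on the fact that altering the half-space changes $\PP^{\beta}$ only by maximal Cohen--Macaulay toric pieces, whose Euler--Koszul Euler characteristic $\rank\HH_1 - \rank\HH_0$ vanishes. Once these two points are in place, the rank jump $j(\beta)$ is visibly a function of $\bbE^{\beta}$ alone.
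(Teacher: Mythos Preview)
Your reduction to $j(\beta)=\rank\HH_1(P^\beta,\beta)-\rank\HH_0(P^\beta,\beta)$ matches the paper, but the filtration strategy after that has two genuine gaps. First, the dimension filtration you describe is not a filtration by $S_A$-submodules: if $\alpha$ lies in a translate $b+\Z G$ with $\dim G\le k$ and also in $b'+\Z F$ for some larger $F\in\FF(\beta)$, then $\partial_i\cdot t^\alpha$ with $a_i\in F\setminus G$ can land in $\PP^\beta$ but outside every translate attached to a face of dimension $\le k$. So $P^\beta_{(k)}$ as written is not closed under the $S_A$-action, and your successive quotients are not well-defined toric modules.

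Second, and more seriously, even with a correct filtration the quantity $\rank\HH_1-\rank\HH_0$ does \emph{not} telescope along long exact sequences: only the full alternating sum $\chi_0=\sum_q(-1)^q\rank\HH_q$ is additive, and that vanishes for every ranking toric module (Lemma~\ref{lemma-chi P}), so it carries no information. What you need is the partial characteristic $\chi_2$, and extracting that from a filtration requires control of the connecting maps. The paper confronts exactly this issue by replacing your filtration with a cellular \emph{resolution} $I_J^\bullet$ of $P^\beta$ by simple ranking toric modules indexed by the faces of a simplex (Notation~\ref{not-cpx I}), running the spectral sequence of the double complex $\KK_\bullet(I_J^\bullet,\beta)$, and proving the key technical Lemma~\ref{lemma-ss terminates} that this spectral sequence degenerates at $E_2$. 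That degeneration is what converts the ``overlap'' problem you flag into the explicit correction terms $\rank(\image\delta_1^{p,-q})$ in \eqref{11}, and its proof rests on the $\bigwedge^\bullet\Z F^\bot$ decomposition of Euler--Koszul homology over face rings from Section~\ref{sec:EKstr}. Your sketch does not supply any analogue of this degeneration step, and without it the combinatorial formula cannot be extracted.
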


In particular, the rank of the hypergeometric system is the same 
at parameters which share the same ranking lattices. 
The proof of Theorem~\ref{thm-compute jump} can be found in 
Section~\ref{subsec:rank jumps} as a special case of our main result, 
Theorem~\ref{thm-main}. 

%%%%%%%%%%%%%%%%%%%%%%%%%%%%%%%%%%%%%%%%%
\subsection{The ranking slab stratification of the exceptional arrangement}%%%%%
%%%%%%%%%%%%%%%%%%%%%%%%%%%%%%%%%%%%%%%%%
\label{subsec:r-slabs}

Let $X$ and $Y$ be subspace arrangements in $\C^d$. 
We say that a stratification $\cS$ of $X$ \emph{respects} $Y$ 
if for each irreducible component $Z$ of $Y$ and each stratum $S\in \cS$, 
either $S\cap Z = \nothing$ or $S\subseteq Z$. 
A \emph{ranking slab} of $M$ is 
a stratum in the coarsest stratification of $\EE_A(M)$ 
that respects the arrangements $\cR_A(M)$ 
and the negatives of the quasidegrees of each of the 
$\ext$ modules that determine $\EE_A(M)$ (see Definition~\ref{def-ranking slab}).

Proposition~\ref{prop-zc E} states that the parameters 
$\beta,\beta'\in\C^d$ belong to the same 
ranking slab of $M$ exactly when their ranking lattices coincide, 
that is, $\bbE^{\beta} = \bbE^{\beta'}$. 
Combining this with Theorem~\ref{thm-compute jump}, we see that 
the rank of $\HH_0(M,-)$ is constant on each ranking slab. 

\begin{corollary}\label{cor-strat vs equiv}
The function $j(-)$ is constant on each ranking slab. 
In particular, the stratification of the exceptional arrangement 
$\EE_A(M)$ by ranking slabs refines its rank stratification.
\end{corollary}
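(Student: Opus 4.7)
The plan is to combine the two results cited just before the corollary: Theorem~\ref{thm-compute jump} and Proposition~\ref{prop-zc E}. The former expresses the rank jump $j(\beta)$ as a combinatorial invariant of the ranking lattices $\bbE^{\beta}$; the latter identifies the ranking slab containing $\beta$ with the equivalence class of $\beta$ under the relation $\bbE^{\beta} = \bbE^{\beta'}$. Chaining these two statements immediately produces the constancy of $j(-)$ on ranking slabs.

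More concretely, I would fix a ranking slab $S\subseteq \EE_A(M)$ and pick any two parameters $\beta, \beta'\in S$. Proposition~\ref{prop-zc E} forces $\bbE^{\beta} = \bbE^{\beta'}$. Theorem~\ref{thm-compute jump} then gives that the rank jump is a function of the ranking lattices alone, so $j(\beta)=j(\beta')$. This establishes that $j(-)$ is constant on each ranking slab, which is the first assertion of the corollary.

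For the refinement claim, recall that the rank stratification of $\EE_A(M)$ is by definition the partition into level sets of $j(-)$, or equivalently the stratification whose members are the pieces $\EE_A^i(M)\setminus \EE_A^{i+1}(M)$ carved out by the nested Zariski-closed sets from Section~\ref{subsec:background}. Since $j(-)$ has just been shown to be constant on every ranking slab, each ranking slab is contained in a single level set of $j(-)$, i.e.\ in a single rank stratum. Thus the partition of $\EE_A(M)$ into ranking slabs refines its rank stratification, as required.

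The only real subtlety is a borrowed one: the whole argument rests on Theorem~\ref{thm-compute jump}, whose contents include that $\rank \HH_1(P^{\beta},\beta)-\rank \HH_0(P^{\beta},\beta)$ depends on $\beta$ only through $\bbE^{\beta}$, despite the intermediate construction of $P^{\beta}$ involving a truncation of $\bbE^\beta$ by a half-space that depends on $\beta$. Once that theorem is in hand — and Proposition~\ref{prop-zc E} has characterized ranking slabs via equality of ranking lattices — the corollary is essentially a formal consequence, with no new obstacle arising in its proof.
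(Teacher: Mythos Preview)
Your proposal is correct and matches the paper's own proof essentially line for line: the paper also takes $\beta,\beta'$ in the same ranking slab, invokes Proposition~\ref{prop-zc E} to get $\bbE^{\beta}=\bbE^{\beta'}$, and then applies Theorem~\ref{thm-compute jump} to conclude $j(\beta)=j(\beta')$. Your added paragraph unpacking why constancy of $j(-)$ on ranking slabs yields the refinement statement is accurate and simply makes explicit what the paper leaves implicit.
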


Hence, like $\EE_A(M)$, each set $\EE_A^i(M)$ is a union of translated 
linear subspaces of $\C^d$ which are generated by faces of $\R_{\geq0}A$.
In order for the ranking slab stratification of $\EE_A(M)$ to refine 
its rank stratification, it must respect 
each of the arrangements appearing in its definition; 
this can be seen from 
Examples~\ref{example-hidden},~\ref{example-nonconstant slab},~and~\ref{plane-line-4diml}. 
In particular, as $\cR_A(M)\supsetneq\EE_A(M)$, 
the exceptional arrangement $\EE_A(M)$ 
does not generally contain enough information to 
determine its rank stratification. 

%%%%%%%%%%%%%%%%%%%%%%%%%%%%%%%%%%%%%%%%%
\subsection{A connection to the isomorphism classes of hypergeometric systems}%
%%%%%%%%%%%%%%%%%%%%%%%%%%%%%%%%%%%%%%%%%
\label{subsec:iso conn}

When $M = S_A$, the ranking lattices $\bbE^{\beta}$
are directly related to the combinatorial sets $E_\tau(\beta)$ defined by Saito, 
which determine the isomorphism classes of $M_A(\beta)$. 
In \cite{saito isom,ST diffl algs}, various $b$-functions 
arising from an analysis of the symmetry algebra of 
$A$-hypergeometric systems are used to link 
these isomorphism classes to the sets $E_\tau(\beta)$. 
We conclude this paper with a shorter proof, 
replacing the use of $b$-functions with Euler--Koszul homology.

%%%%%%%%%%%%%%%%%%%%%%%%%%%%%%%%%%%%%%%%%
\subsection*{Outline}%%%%%%%%%%%%%%%%%%%%%%%%%%%%%%
%%%%%%%%%%%%%%%%%%%%%%%%%%%%%%%%%%%%%%%%%
\label{subsec:outline}

The following is a brief outline of this article. 
In Section~\ref{sec:defns}, we summarize definitions and results on
weakly toric modules and Euler--Koszul homology, following \cite{MMW,EKDI}.
Section~\ref{sec:EKstr} is a study of the structure of the Euler--Koszul complex 
of maximal Cohen--Macaulay toric face modules. 
The relationship between the exceptional and ranking arrangements of $M$ 
is made precise in Section~\ref{sec:exceptional arrangement}. 
In Section~\ref{sec:r-t mods}, we define the class of ranking toric modules, 
which play a pivotal role in calculating the rank jump $j(\beta)$. 
Section~\ref{sec:comb of rank} contains our main theorem, 
Theorem~\ref{thm-main}, which results in the computation 
of $j(\beta)$ for a fixed parameter $\beta$.
We close with a discussion on the isomorphism classes of 
$A$-hypergeometric systems in Section~\ref{sec:isom classes}.

%%%%%%%%%%%%%%%%%%%%%%%%%%%%%%%%%%%%%%%%%
\subsection*{Acknowledgments}%%%%%%%%%%%%%%%%%%%%%%%%%
%%%%%%%%%%%%%%%%%%%%%%%%%%%%%%%%%%%%%%%%%

I am grateful to my advisor Uli Walther for many inspiring conversations 
and thoughtful suggestions throughout the duration of this work. 
I would also like to thank Laura Felicia Matusevich for asking the question 
``Is rank constant on a slab?" as well as helpful remarks on this text  
and conversations that led directly to 
the proof of Theorem~\ref{thm-isom classes}. 
Finally, my thanks to Ezra Miller for many valuable comments 
for improving this paper, 
including the addition of Definition~\ref{def-cellular definition}, 
and to the referee for detailed notes that clarified this article.

%%%%%%%%%%%%%%%%%%%%%%%%%%%%%%%%%%%%%%%%%
\section{The language of Euler--Koszul homology}%%%%%%%%%%%%%%%%
%%%%%%%%%%%%%%%%%%%%%%%%%%%%%%%%%%%%%%%%%
\label{sec:defns}

In this section, we summarize definitions found in the literature and set notation.
Most important are the definitions of a weakly toric 
module \cite{EKDI} and Euler--Koszul homology \cite{MMW}.

Let $a_1,a_2,\dots,a_n$ denote the columns of $A$. 
For a face $F\preceq A$, let $F^c$ denote the complement of a face $F$ 
in the column set of $A$. 
If $F$ is any subset of the columns of $A$, 
the \emph{codimension} of $F$ is $\codim(F):= \codim_{\C^d}(\C F)$, 
the codimension of the $\CC$-vector space generated by $F$. 
The \emph{dimension} of $F$ is $\dim(F)=d-\codim_\C(\C F)$. 

A face $F$ of $A$ is a {\it facet} of $A$ if $\dim(F)=d-1$. 
Recall that the {\it primitive integral support function} of a facet 
$F\preceq A$ is the unique linear functional 
$p_F:\C^d\rightarrow\C$ such that
    \begin{romanlist}
      \item $p_F(\Z A)= \Z,$
      \item $p_F(a_i)\geq0$ for all $j=1,\dots,n$, and
      \item $p_F(a_i)=0$ exactly when $a_i\in F$.
    \end{romanlist}
The {\it volume} of a face $F$, denoted $\vol(F)$, is the integer 
$\dim(F)!$ times the Euclidean volume in $\Z F \otimes_{\Z} \R$ 
of the convex hull of $F$ and the origin.

\begin{definition}\label{def:semigroup ring D}
Let $\N F = \{\sum_{a_i\in F} \gamma_ia_i \mid \gamma_i \in\N \}$
be the semigroup generated by the face $F$ and, as in \eqref{eq-semigroup ring},
\[
S_F=\C[\N F]
\] 
is the corresponding semigroup ring, called a {\it face ring} of $A$. 
Let $x_F = \{ x_i \mid a_i\in F\}$ and $\del_F = \{\del_i\mid a_i\in F\}$. 
Define 
\[
R_F = \C[\del_F] 
\]
to be the polynomial ring in $\del_F$ and 
\[
D_F = \C\<x_F,\del_F\mid [x_i,\del_j] = \delta_{ij},[x_i,x_j] = 0 = [\del_i,\del_j]\>
\]
to be the \emph{Weyl algebra} associated to $F$. 
Note that 
\[
S_F \cong R_F/ (I_F +\<\del_{F^c}\>) \quad \text{ with } I_F=\ker(R_F\rightarrow S_F) \text{ and } F^c = A\setminus F.
\]
\end{definition}

\begin{definition}\label{def:monoid}
Let $t=t_1,\dots,t_d$ be variables. 
For a face $F\preceq A$, we say that a subset $\SS\subseteq\Z^d$ is an 
\emph{$\N F$--module} if $\SS + \N F\subseteq \SS$. 
Further, we call an $\N F$--module $\SS$ an \emph{$\N F$--monoid} 
if it is closed under addition, that is, for all $s,s'\in\SS$, $s+s'\in\SS$. 
Given an $\N F$--module $\SS$, define the $S_F$-module
$\C\{\SS\} = \bigoplus_{s\in\SS}\C\cdot t^s$ as a $\C$-vector space 
with $S_F$-action given by $\del_i\cdot t^s = t^{s + a_i}$. 
Further, $\C\{\SS\}$ is equipped with a multiplicative structure given by 
$t^s\cdot t^{s'} = t^{s+s'}$ for $s,s'\in\SS$ and extended $\C$-linearly. 
By definition, $\N F$ is an $\N F$--monoid and $S_F\cong \C\{ \N F\}$ as rings.
\end{definition}

Define a $\Z^d$-grading on $R_F \subseteq D_F$ by setting 
\[
\deg(\del_i)=a_i \quad \text{and} \quad \deg(x_i)=-a_i.
\]
Then $\C\{\SS\}$ is naturally a $\Z^d$-graded $S_F$-module 
by setting $\deg(t^s)=s$. 

The {\it saturation} of $F$ in $\Z F$ is the semigroup
$\wt{\N F}=\R_{\geq 0}F\cap\Z F$.
The {\it saturation}, or {\it normalization}, of $S_F$ is the semigroup ring 
of the saturation of $F$ in $\Z F$, which is given by $\wt{S}_F=\C\{ \wt{\N F} \}$ 
as a $\Z^d$-graded $S_F$-module. 
Note that $\wt{S}_F$ is a Cohen--Macaulay $S_F$-module \cite{Hochster}.

If $N$ is a $\Z^d$-graded $R$-module and $v\in\Z^d$,
the {\it degree set} of $N$, denoted $\deg(N)$,
is the set of all $v\in\Z^d$ such that $N_v\neq0$.
Let $N(v)$ denote the $\Z^d$-graded module with $v'$-graded piece 
$N(v)_{v'} = N_{v+v'}$. 

We now recall the definitions of toric and weakly toric modules and 
their quasidegree sets, which can be found in 
\cite[Definition~4.5]{MMW} and \cite[Section~5]{EKDI}, respectively. 

\begin{definition}\label{def:toric}
A $\Z^d$-graded $R$-module is \emph{toric} if it has a filtration 
\[
0 = M_0 \subseteq M_1 \subseteq \cdots \subseteq 
M_{\ell - 1} \subseteq M_\ell = M
\]
such that for each $i$, $M_i/M_{i-1}$ is a $\Z^d$-graded translate of 
$S_{F_i}$ for some face $F_i \preceq A$. 
Notice that toric modules are necessarily finitely generated $R$-modules. 
\end{definition}

\begin{definition}\label{def-qdeg}
If $N$ is a finitely generated $\Z^d$-graded $R$-module, 
a vector $v\in\C^d$ is a {\it quasidegree} of $N$, written
$v\in\qdeg(N)$, if $v$ lies in the Zariski closure of
$\deg(N)$ under the natural embedding $\Z^d\hookrightarrow\C^d$. 
Notice that if $N$ is toric, then $\qdeg(N)$ is a finite subspace arrangement 
in $\C^d$, consisting of translated subspaces generated by faces of $A$, 
see \cite[Lemma~2.5]{DMM}. 
\end{definition}

A partially ordered set $(\fS,\leq)$ is \emph{filtered} 
if for each $s',s''\in\fS$ there exists $s\in\fS$ 
with $s'\leq s$ and $s''\leq s$. 

\begin{definition}\label{def-wtoric}
We say that a $\Z^d$-graded $R$-module $M$ is \emph{weakly toric} if 
there is a filtered partially ordered set $(\fS,\leq)$
and a $\Z^d$-graded direct limit 
\[
\phi_s: M_s \to {\dlim}_{s\in\fS} M_s = M
\]
where $M_s$ is a toric $R$-module for each $s\in\fS$. 
We then define the \emph{quasidegree set} of $M$ to be 
\[
\qdeg(M) = \bigcup_{s\in\fS} \qdeg(\phi_s(M_s)),
\]
where each $\qdeg(\phi_s(M_s))$ is defined by Definition~\ref{def-qdeg}. 
\end{definition}

\begin{example}\label{ex-M wt}
If $\dM\subseteq\Z^d$ is an $\N A$--module, 
then $M = \C\{\dM\}$ is weakly toric 
because it is a direct limit over $b\in\dM$ of $S_A(-b)$ under 
the natural $A$-homogeneous inclusion 
$S_A(-b)\subseteq S_A[\del_A^{-1}]\cong \C[\Z^d]$. 
\end{example}

\begin{example}
Consider the matrix 
$A = 
\left[\begin{smallmatrix}
1 & 1 & 1 & 1 \\
0 & 1 & 3 & 4
\end{smallmatrix}\right]$
with face $F = \left[\begin{smallmatrix}
1 \\
4
\end{smallmatrix}\right]$. 
The module $S_F[\del_F^{-1}]$ is weakly toric with quasidegree set 
\[
\qdeg\left(S_F[\del_F^{-1}]\right) = \C F 
\]
because it is a filtered direct limit over $b\in\Z F$ of $S_F(-b)$. 
Similarly, the module $S_A[\del_F^{-1}]$ is weakly toric 
with $\qdeg\left(S_A[\del_F^{-1}]\right) = \C^2$. 
The quotient $S_A[\del_F^{-1}]/S_A$ is also weakly toric. 
Its quasidegree set consists of the point 
$\left[\begin{smallmatrix} 1 \\ 2 \end{smallmatrix}\right]$ 
and the union of lines in $\{ t_2 = k \mid k\in\Z_{<0}\}$, 
where $(t_1,t_2)$ are the coordinates of $\C^2$. 
\end{example}

We now recall the definition of the Euler--Koszul complex of a weakly toric module 
$M$ with respect to a parameter $\beta\in\C^d$. 
For $1\leq i\leq d$, each {\it Euler operator} 
$E_i-\beta_i = \sum_{j=1}^na_{ij}x_j\del_j - \beta_i$
determines a $\Z^d$-graded $D$-linear endomorphism of $D\otimes_R M$, 
defined on a homogeneous $y\in D\otimes_R M$ by
\begin{eqnarray*}
(E_i-\beta_i)\circ y=(E_i - \beta_i + \deg_i(y))y
\end{eqnarray*}
and extended $\C$-linearly.  
This sequence $E-\beta$ of commuting endomorphisms determines 
a Koszul complex $\KK^A_\bullet(M,\beta)=\KK_\bullet(M,\beta)$
on the left $D$-module $D\otimes_R M$, called the 
{\it Euler--Koszul complex} of $M$ with parameter $\beta$. 
The {\it $i^\text{th}$ Euler--Koszul homology module} of $M$ is
$\HH^A_i(M,\beta)=\HH_i(M,\beta) = H_i(\KK_\bullet(M,\beta))$. 
Our object of study will be the {\it generalized $A$-hypergeometric system}
$\HH_0(M,\beta)$ associated to $M$.

The Euler--Koszul complex defines an exact functor from the category of 
weakly toric modules with degree-preserving morphisms to the category of 
bounded complexes of $\Z^d$-graded left $D$-modules with degree-preserving 
morphisms, so short exact sequences of weakly toric modules yield long exact 
sequences of Euler--Koszul homology.
Notice also that Euler--Koszul homology behaves well under $\Z^d$-graded 
translations: for $b\in\Z^d$,
\begin{eqnarray}\label{shift HH}
\HH_q(M(b),\beta)\cong \HH_q(M,\beta-b)(b).
\end{eqnarray}
We close this section by recording two important vanishing results for 
Euler--Koszul homology.

\begin{proposition}\label{prop-HH qis 0}
For a weakly toric module $M$, the following are equivalent:
\begin{enumerate}
\item $\HH_i(M,\beta)=0$ for all $i\geq0$,
\item $\HH_0(M,\beta)=0$,
\item $\beta\notin\qdeg(M).$
\end{enumerate}
\end{proposition}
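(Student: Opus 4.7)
The implication (1) $\Rightarrow$ (2) is immediate. For the remaining equivalences, the plan is a two-stage reduction to the case $M = S_F$ for a single face $F \preceq A$, where the algebraic structure of $D \otimes_R S_F$ can be exploited directly.

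The first reduction uses that $D\otimes_R(-)$ and the homology of a complex commute with filtered direct limits, so writing $M = \dlim_{s\in\fS} M_s$ as in Definition~\ref{def-wtoric} yields $\HH_i(M,\beta) = \dlim_s \HH_i(\phi_s(M_s),\beta)$, while $\qdeg(M) = \bigcup_s \qdeg(\phi_s(M_s))$ by definition; thus it suffices to verify the equivalence on each $\phi_s(M_s)$, a finitely generated $\Z^d$-graded image of a toric module and hence itself carrying a toric filtration. Inducting on the filtration length via the long exact sequence attached to $0 \to M_{k-1} \to M_k \to S_{F_k}(v_k) \to 0$, together with the short-exact-sequence additivity $\qdeg(M_k) = \qdeg(M_{k-1}) \cup (v_k + \C F_k)$, reduces the problem to $M = S_F(v)$; the degree-shift formula \eqref{shift HH} then puts us in the case $M = S_F$ with $\qdeg(S_F) = \C F$.

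For the base case, (3) $\Rightarrow$ (1) follows from the following trick: if $\beta \notin \C F$, pick $c \in \C^d$ vanishing on $\C F$ with $c(\beta) \neq 0$. Since $\del_j$ acts as zero on $S_F$ for $a_j \in F^c$, we have $\sum_i c_i E_i = \sum_j c(a_j)\,x_j\del_j = 0$ on $D\otimes_R S_F$, so $\sum_i c_i(E_i - \beta_i) = -c(\beta)$ is a nonzero scalar. But this operator lies in the $D$-ideal generated by $E - \beta$, which annihilates every Euler--Koszul homology module, forcing $\HH_i(S_F,\beta) = 0$ for all $i$. The contrapositive of (2) $\Rightarrow$ (3) instead requires $\HH_0(S_F,\beta) \neq 0$ whenever $\beta \in \C F$; the strategy is to identify $\HH_0(S_F,\beta)$, after absorbing the relations $\del_{F^c}=0$, with an $F$-hypergeometric system in the Weyl algebra $D_F$, and then to invoke Adolphson's generic-rank result plus upper semi-continuity to obtain the uniform lower bound $\rank\geq\vol(F)>0$. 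The main obstacle is precisely this last step: one must check that restricting from $D$ to $D_F$ introduces no spurious relations in $\HH_0$, and that the rank lower bound $\vol(F)$ holds at every $\beta\in\C F$ rather than only generically, which is where one imports the face-level nonvanishing analysis of \cite{MMW}.
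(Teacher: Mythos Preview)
The paper does not prove this statement; it simply cites \cite[Theorem~5.4]{EKDI}, which in turn extends the toric case \cite[Proposition~5.3]{MMW} to weakly toric modules. Your sketch follows the same architecture as those references: reduce to toric modules via direct limits, reduce to $S_F$ via the toric filtration, and handle $S_F$ directly. Your base-case arguments are correct---the scalar trick for (3)$\Rightarrow$(1) is exactly the argument in \cite{MMW}, and the filtration induction for finitely generated toric modules goes through with the standard case split on whether $\beta$ lies in $v_k+\C F_k$.

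There is, however, a genuine gap in the direct-limit reduction for the implication (2)$\Rightarrow$(3). You assert that ``it suffices to verify the equivalence on each $\phi_s(M_s)$,'' and while this is immediate for (3)$\Rightarrow$(1) (vanishing of each $\HH_i(\phi_s(M_s),\beta)$ forces vanishing of the filtered colimit), the converse direction does not follow formally: from $\HH_0(M,\beta)=\dlim_s \HH_0(\phi_s(M_s),\beta)=0$ one cannot conclude that each $\HH_0(\phi_s(M_s),\beta)$ vanishes, since filtered colimits of nonzero modules can be zero. Concretely, if $\beta\in\qdeg(\phi_{s_0}(M_{s_0}))$ then the toric case gives $\HH_0(\phi_{s_0}(M_{s_0}),\beta)\neq 0$, but the transition maps $\HH_0(\phi_s(M_s),\beta)\to\HH_0(\phi_t(M_t),\beta)$ need not be injective---their kernels are quotients of $\HH_1(\phi_t(M_t)/\phi_s(M_s),\beta)$---so a nonzero class can die in the limit. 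The treatment in \cite{EKDI} addresses this point with more care; you should either consult that argument or identify an invariant (a specific $\Z^d$-graded piece, or a rank bound) that provably survives passage to the colimit.
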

\begin{proof} See \cite[Theorem~5.4]{EKDI}.
\end{proof}

\begin{theorem}\label{thm-CM HH rel}
Let $M$ be a weakly toric module.
Then $\HH_i(M,\beta)=0$ for all $i>0$ and for all $\beta\in\C^d$
if and only if $M$ is a maximal Cohen--Macaulay $S_A$-module.
\end{theorem}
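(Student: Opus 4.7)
The plan is to reduce both implications to an explicit analysis of the face rings $S_F$, via two successive filtration arguments.

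First, I would reduce from weakly toric to toric modules. Since the Euler--Koszul complex is built functorially from $D\otimes_R(-)$ and a Koszul complex on a fixed sequence of endomorphisms, and both operations commute with filtered colimits, writing $M = \dlim_{s\in\fS} M_s$ with each $M_s$ toric yields $\HH_i(M,\beta) = \dlim_s \HH_i(M_s,\beta)$. Combined with the definition $\qdeg(M) = \bigcup_s \qdeg(\phi_s(M_s))$, and interpreting the Cohen--Macaulay hypothesis on a weakly toric $M$ via its toric approximants, the theorem reduces to the case of a toric $M$. For a toric $M$ with filtration $0 = M_0 \subset M_1 \subset \cdots \subset M_\ell = M$ and $M_i/M_{i-1} \cong S_{F_i}(a_i)$, the long exact sequences in Euler--Koszul homology together with the shift identity \eqref{shift HH} let me induct on $\ell$, reducing the problem to the case $M = S_F$ for a face $F\preceq A$. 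Since $S_F$ has Krull dimension $\dim(F)$ as an $S_A$-module, it is maximal Cohen--Macaulay over $S_A$ exactly when $F = A$ and $S_A$ itself is Cohen--Macaulay, so the theorem becomes
\[
\HH_i(S_F,\beta) = 0 \text{ for all } i > 0 \text{ and all } \beta\in\C^d
\iff F = A \text{ and } S_A \text{ is Cohen--Macaulay}.
\]

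For the ``$\Leftarrow$'' direction of this face-ring assertion, I would exploit the $\Z^d$-grading of $D\otimes_R S_A$. The twisted action $(E_i - \beta_i)\circ y = (E_i - \beta_i + \deg_i(y))y$ respects the grading, and on the degree-$\alpha$ component it presents the Euler--Koszul complex as a Koszul complex whose associated graded is the multiplication sequence $E_1,\dots,E_d$ acting on $S_A$. Since $E_1,\dots,E_d$ form a homogeneous system of parameters on $S_A$, the Cohen--Macaulay hypothesis makes them a regular sequence, hence the Koszul homology vanishes in positive degrees, one graded piece at a time. For the converse, suppose $F$ is a proper face and pick $\beta\in\C F$ generic; Proposition~\ref{prop-HH qis 0} forces $\HH_0(S_F,\beta)\neq 0$, while the primitive integral support functionals $p_G$ for facets $G\supseteq F$ furnish Euler directions along which $E-\beta$ fails to act regularly on $D\otimes_R S_F$. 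By a rank comparison in the Koszul complex, this deficit produces a nonzero top Euler--Koszul homology module $\HH_{d-\dim F}(S_F,\beta)$, contradicting the hypothesis.

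The main obstacle is the ``$\Leftarrow$'' direction in the face-ring case: translating the commutative Cohen--Macaulay statement (that $E_1,\dots,E_d$ form a regular sequence on the semigroup ring $S_A$) into vanishing of a Koszul complex built over the noncommutative algebra $D\otimes_R S_A$. The bridge is a careful analysis of the twisted action against the $\Z^d$-grading, so that the noncommutative Koszul complex can be compared, graded piece by graded piece, with the commutative Koszul complex on the regular sequence $E$ inside $S_A$. Setting up this comparison cleanly, without resorting to heavy spectral-sequence machinery, is the technical core of the argument.
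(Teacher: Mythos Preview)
The paper itself does not give a proof; it cites \cite[Theorem~6.6]{MMW} for the toric case and \cite{EKDI} for the weakly toric extension. Your proposal attempts an independent argument, but its central reduction step fails.

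The gap is in your second paragraph: you claim that the toric filtration with quotients $S_{F_i}(a_i)$, together with the long exact sequences in Euler--Koszul homology, reduces the problem to the case $M = S_F$. But neither side of the equivalence is compatible with passage to filtration quotients. A maximal Cohen--Macaulay toric $S_A$-module (for instance $\wt{S}_A$ when $S_A$ is not normal) will typically have proper face rings among its filtration quotients, and a proper face ring $S_F$ is \emph{never} maximal Cohen--Macaulay over $S_A$, since its Krull dimension is $\dim F < d$. Conversely, such an $S_F$ has nonvanishing higher Euler--Koszul homology for $\beta\in\C F$ (this is exactly what Proposition~\ref{prop-str CM EK complex} computes). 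So in the long exact sequences you are splicing together pieces that individually fail \emph{both} conditions of the theorem; the global vanishing for $M$, when it occurs, comes from cancellation across the sequence, not from vanishing on each quotient, and there is no induction on $\ell$ to run in either direction. The argument in \cite{MMW} proceeds along entirely different lines: a holonomic duality result identifies Euler--Koszul homology of $M$ with Euler--Koszul homology of the modules $\ext^j_R(M,R)$, so that vanishing of $\HH_{>0}(M,\beta)$ for all $\beta$ becomes, via Proposition~\ref{prop-HH qis 0}, the vanishing of $\ext^j_R(M,R)$ for $j>n-d$, which is precisely the maximal Cohen--Macaulay property. Your sketch of the ``$\Leftarrow$'' direction for $S_A$ is also not yet a proof as written: the $E_i$ act on $D\otimes_R S_A$, whose graded pieces are not copies of $S_A$, so the phrase ``multiplication sequence $E_1,\dots,E_d$ acting on $S_A$'' requires a genuine comparison argument (e.g.\ via the order filtration on $D$) that you have not supplied.
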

\begin{proof}
See \cite[Theorem~6.6]{MMW} for the toric case.
The extension to the weakly toric case can be found in \cite{EKDI}.
\end{proof}

%%%%%%%%%%%%%%%%%%%%%%%%%%%%%%%%%%%%%%%%%
\section{Euler--Koszul homology and toric face modules}%%%%%%%%%%%%%
%%%%%%%%%%%%%%%%%%%%%%%%%%%%%%%%%%%%%%%%%
\label{sec:EKstr}

Theorem~\ref{thm-CM HH rel} provides a criterion for higher vanishing of 
Euler--Koszul homology via maximal Cohen--Macaulay $S_A$-modules. 
In this section, we provide a description of the Euler--Koszul homology modules 
of maximal Cohen--Macaulay $S_F$-modules for a face $F\preceq A$ 
and use it to understand the images of maps between such modules. 

Throughout this section, 
$N$ is a toric $S_F$-module for a face $F\preceq A$. 
Recall the definitions of toric, $S_F$, $D_F$, and $R_F$ from 
Definition~\ref{def:semigroup ring D}, and let 
\[
x_{F^c} = \{x_i\mid a_i \in F^c\}.
\] 

\begin{notation}\label{not-HHF}
Let $\cI_F$ be the lexicographically first subset of $\{1,2,\dots,d\}$ 
of cardinality $\dim(F)$ such that $\{E_i-\beta_i\}_{i\in\cI_F}$ is a set of 
linearly independent Euler operators on $D\otimes_R N$. 
The existence of $\cI_F$ follows from the fact that the matrix $A$ has full rank. 
We use $\KK^F_\bullet(N,\beta)$ to denote the Euler--Koszul complex on 
$D_F \otimes_{R_F} N$ given by the operators $\{E_i - \beta_i\}_{i\in\cI_F}$, 
and set 
\[
\HH^F_i(N,\beta) = H_i(\KK^F_\bullet(N,\beta)).
\]
\end{notation}

Using the standard basis of $\Z A=\Z^d$, let
\[
\Z F^{\bot} = 
	\left\{ v\in\Z^d\ \Bigg\vert\ 
		\sum_{i=1}^d v_ia_{ij}=0\ \forall\ a_j\in F \right\},
\]
and let $\bigwedge^\bullet\Z F^\bot$ denote a complex 
with trivial differentials. 
We show now that when $\beta\in\C F$, $\KK_\bullet(N,\beta)$ 
is quasi-isomorphic to a complex involving $\KK^F_\bullet(N,\beta)$ 
and $\bigwedge^\bullet\Z F^\bot$.

\begin{proposition}\label{prop-str CM EK complex}
Let $F\preceq A$ and $N$ be a toric $S_F$-module.
If $\beta\in\CC F$, then there is a quasi-isomorphism of complexes
\begin{eqnarray}\label{12}
\KK_\bullet(N,\beta)
    \simeq_{qis} \C[x_{F^c}] \otimes_\C
    \KK_\bullet^F(N,\beta) \otimes_\Z
    \left(\textstyle\bigwedge^\bullet\Z F^\bot\right).
\end{eqnarray}
In particular, if $N$ is maximal Cohen--Macaulay as an $S_F$-module, 
there is a decomposition
\begin{eqnarray}\label{6}
\HH_\bullet(N,\beta)
    = \C[x_{F^c}] \otimes_\C \HH_0^F(N,\beta) \otimes_\Z
     \left(\textstyle\bigwedge^\bullet\Z F^\bot\right).
\end{eqnarray}
\end{proposition}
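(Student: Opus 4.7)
The strategy is to exploit the $S_F$-module structure of $N$ to decompose $D\otimes_R N$, then change basis among the $d$ Euler operators so that $c:=\codim F$ of them assemble into the exterior factor $\bigwedge^\bullet\ZZ F^\bot$, while the remaining $\dim F$ yield $\KK^F_\bullet(N,\beta)$. The hypothesis $\beta\in\C F$ is what makes this change of basis clean. To begin, since $N$ is an $S_F$-module, $\del_j$ annihilates $N$ for $a_j\in F^c$, so combining the PBW splitting $D\cong D_F\otimes_\C D_{F^c}$ with the identification $D_{F^c}/D_{F^c}\langle\del_{F^c}\rangle\cong\C[x_{F^c}]$ of left $D_{F^c}$-modules yields the $\Z^d$-graded $D$-module isomorphism
\[
D\otimes_R N\;\cong\;(D_F\otimes_{R_F} N)\otimes_\C\C[x_{F^c}].
\]

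Next I would change the basis of the Euler operators. Fix a $\Z$-basis $v_1,\ldots,v_c$ of the free abelian group $\Z F^\bot$. The defining linear independence of $\{E_i^F\}_{i\in\cI_F}$ implies that $\{e_i\}_{i\in\cI_F}\cup\{v_l\}$ is a $\C$-basis of $\C^d$, so by invariance of Koszul complexes under invertible linear change of basis, $\KK_\bullet(N,\beta)$ is isomorphic to the Koszul complex on $\{E_i-\beta_i\}_{i\in\cI_F}\cup\{v_l\cdot(E-\beta)\}_{l=1}^c$. Here $\beta\in\C F$ forces $v_l\cdot\beta=0$, and since $v_l$ kills the columns of $F$, the operator $v_l\cdot E=\sum_{j\in F^c}(v_l\cdot a_j)\,x_j\del_j$ involves only $D_{F^c}$. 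A short direct calculation on a pure homogeneous tensor $m\otimes f$ with $m\in D_F\otimes_{R_F} N$ of $\Z^d$-degree $\mu$ then yields
\[
(E_i-\beta_i)\circ(m\otimes f)=\bigl((E_i^F-\beta_i)\circ m\bigr)\otimes f,\quad (v_l\cdot(E-\beta))\circ(m\otimes f)=(v_l\cdot\mu)\,(m\otimes f),
\]
the essential cancellation being that the Euler--Koszul degree shift on the full tensor exactly offsets the differential-operator action of $E_i^{F^c}$ (resp.\ $v_l\cdot E$) on the $\C[x_{F^c}]$ factor, decoupling the two tensor components.

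The resulting bicomplex has Euler--Koszul differential $d_1$ in the $\bigwedge^\bullet\C^{\dim F}$ direction and scalar Koszul differential $d_2$ in the $\bigwedge^\bullet\C^c$ direction. Taking $d_2$-homology first, the $\mu$-graded piece of $D_F\otimes_{R_F} N$ carries the scalar Koszul complex on $(v_l\cdot\mu)_{l=1}^c$ over $\C$: this is exact when $\mu\notin\C F$, and has zero differential (contributing $\bigwedge^\bullet\C^c$) when $\mu\in\C F$. The spectral sequence collapses at $E_2$, and after identifying $\bigwedge^\bullet\C^c\cong\bigwedge^\bullet\Z F^\bot\otimes_\Z\C$ this gives the quasi-isomorphism~\eqref{12}. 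For the MCM specialization, the $F$-analog of Theorem~\ref{thm-CM HH rel} forces $\HH^F_q(N,\beta)=0$ for $q>0$; since the $\bigwedge^\bullet\Z F^\bot$ factor in \eqref{12} has zero differential, taking homology recovers~\eqref{6}. The main obstacle I anticipate is the bookkeeping for summands of $N$ whose degrees lie outside $\C F$: these must contribute trivially on both sides of the qis. On the left this follows from Proposition~\ref{prop-HH qis 0} combined with the shift formula~\eqref{shift HH} (since $\beta$ lies outside the relevant quasidegree sets); on the right it is precisely the acyclicity of $d_2$ on non-$\C F$-graded pieces established above.
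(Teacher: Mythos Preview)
Your approach is essentially the paper's: change basis among the Euler operators so that $\codim F$ of them become trivial on the $\bigwedge^\bullet\Z F^\bot$ factor while the remaining $\dim F$ give $\KK^F_\bullet$. The paper packages this via a matrix $g_F\in\mathrm{GL}_d(\Z)$ whose rows outside $\cI_F$ vanish on $F$; this is exactly your basis $\{e_i\}_{i\in\cI_F}\cup\{v_l\}$ in dual form. Your explicit verification that $(E_i-\beta_i)\circ(m\otimes f)=((E_i^F-\beta_i)\circ m)\otimes f$ and $(v_l\cdot(E-\beta))\circ(m\otimes f)=(v_l\cdot\deg m)\,(m\otimes f)$ supplies detail the paper leaves implicit.

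One caution on your last paragraph. Your spectral sequence argument correctly yields a quasi-isomorphism from $\KK_\bullet(N,\beta)$ to $\C[x_{F^c}]\otimes\KK^F_\bullet(N_{\C F},\beta)\otimes\bigwedge^\bullet\Z F^\bot$, where $N_{\C F}$ is the part of $N$ supported in $\C F$-degrees. But your claim that summands of $N$ with degrees outside $\C F$ contribute trivially ``on the right'' of \eqref{12} via acyclicity of $d_2$ does not parse: the right-hand side of \eqref{12} carries no $d_2$, and in fact $\HH^F_\bullet$ need not vanish on such summands (take $F=\nothing$, $\beta=0$, $N=\C(-b)$ with $b\neq 0$: then $\HH^F_0(N,0)=N\neq 0$ while $\HH_\bullet(N,0)=0$). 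The paper's argument is equally silent on this point. Fortunately the issue is moot in practice: every application of the proposition in the paper (notably Theorem~\ref{thm-rank simple R-toric}, via $\wt S_F$) takes $N$ with $\deg(N)\subseteq\C F$, in which case all scalars $v_l\cdot\mu$ vanish identically and \eqref{12} is an honest isomorphism of complexes---no spectral sequence needed.
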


Under the hypotheses of Proposition~\ref{prop-str CM EK complex},
\[
\HH_i(N,\beta) = \C[x_{F^c}] \otimes_\C
    \HH_0^F(N,\beta^F)^{\binom{\codim(F)}{i}},
\]
for $i>0$, as shown in \cite{okuyama}.
In particular,
\begin{eqnarray}\label{3}
    \rank \HH_i(N,\beta) = \binom{\codim(F)}{i}
        \cdot \rank \HH_0(N,\beta).
\end{eqnarray}
We show in Proposition~\ref{prop-str/rk image} that surjections of 
maximal Cohen--Macaulay toric modules for nested faces yield induced maps 
on Euler--Koszul homology that respect the decompositions of \eqref{6}. 
The additional information stored in $\bigwedge^\bullet\Z F^\bot$ 
of \eqref{12} shows how images of collections 
of such surjections overlap, which will be vital to our 
calculation of $j(\beta)$ in Section~\ref{sec:comb of rank}.

\begin{proof}[Proof of Proposition~\ref{prop-str CM EK complex}]
\label{pf-gF}
Fix a matrix $g_F\in\text{GL}_d(\Z)$ such that the entries of 
each row of $g_F F$ 
not corresponding to $\cI_F$ are zero and 
the rows of $A$ that do correspond to $\cI_F$ are identical in 
$A$ and $g_F A$.
Setting $A'=g_F A$, $S_A$ and $S_{A'}$ are isomorphic rings, 
and the matrix $g_F$ gives a bijection of their degree sets, 
sending $\N A$ to $\N A'$.
This identification makes $N$ a $\Z A'$-graded $S_{F'}$-module, 
where $F' = g_F F$, 
and there is a quasi-isomorphism of complexes
\begin{eqnarray*}
    \KK_\bullet(N,\beta) \simeq_{qis} \KK^{A'}_\bullet(N,g_F \beta).
\end{eqnarray*}

Let $F'=g_F F$ and $\beta' = g_F \beta$, and recall that $A' = g_F A$.
By the definition of $g_F$, $\beta'_i = 0$ 
for $i\notin\cI_F$ because $\beta'\in\C F'$. 
Let $D_{A'}$ and $R_{A'}$ denote the Weyl algebra and the polynomial 
ring $\C[\del]$ with an $A'$-grading.
Since $N$ is an $S_{F'}$-module, 
$0= \del_{F'^c}\otimes N\subseteq D_{A'} \otimes_{R_{A'}} N$, 
and so there is an isomorphism
$D_{A'} \otimes_{R_{A'}} N \cong \C[x_{F'^c}] \otimes_\C D_{F'} \otimes_{R_{F'}} N$. 
Hence the action of each element in 
$\{\sum_{j=1}^n a'_{ij}x_j\del_j\}_{i\notin\cI_F}$ 
on $D_{A'} \otimes_{R_{A'}} N$ is 0.

If $\{e_1,\dots,e_d\}$ denotes the standard basis of $\Z^d=\Z A'$, 
then the set $\{g_F^{-1} e_i\}_{i\notin\cI_F}$ generates 
$\Z F^\bot$ by choice of $g_F$. 
Applying the isomorphism 
$D_F\otimes_{R_F}N\cong D_{F'}\otimes_{R_{F'}}N$ 
in the reverse direction, we obtain \eqref{12}.
Finally, if $N$ is maximal Cohen--Macaulay as an $S_F$-module, 
$\HH^F_i(N,\beta^F)=0$ for all $i>0$ by Theorem~\ref{thm-CM HH rel}.
\end{proof}

\begin{remark}\label{remark-diffl in CM EK complex}
Let $\delta$ and $\kappa_F$ respectively denote the differentials of the 
Euler--Koszul complexes 
$\KK^{g_F A}_\bullet(N,g_F \beta)$ and $\KK^F_\bullet(N,\beta)$. 
Under the hypotheses of Proposition~\ref{prop-str CM EK complex}, 
if $i+j=q$ and 
$f \otimes a\otimes b \in
    \C[x_{F^c}] \otimes_\C \KK^F_i(N,\beta) \otimes_\Z
    \left(\textstyle\bigwedge^j\Z F^\bot\right)$,
then
\[
\delta(f\otimes a\otimes b) = f \otimes \kappa_F(a) \otimes b
\]
is an element of
$ \C[x_{F^c}] \otimes_\C \KK^F_{i-1}(N,\beta) \otimes_\C
    \left(\textstyle\bigwedge^j\Z F^\bot\right) \subseteq \KK^{A'}_{q-1}(N,\beta).$
\end{remark}

\begin{example}\label{example-EK CM comp}
Consider the matrix
\[
A=\left[\begin{matrix}
    1 & 1 & 1 & 1 \\
    0 & 1 & 0 & 1 \\
    0 & 0 & 1 & 1 \end{matrix}\right].
\]
Set $\beta =
\left[\begin{smallmatrix}0\\0\\0\end{smallmatrix}\right] \in\C^3$,
and let $e_1,e_2,e_3$ denote the standard basis vectors in $\Z A~=~\Z^3$.
Notice that every face ring of $A$ is Cohen--Macaulay because the
semigroup generated by each face of $A$ is saturated.
For the face $\nothing\preceq A$, we choose 
$g_\nothing$ to be the identity matrix. 
The proof of Proposition~\ref{prop-str CM EK complex}
shows that there is an isomorphism of complexes
$\KK_\bullet(S_\nothing,\beta) \cong
    \bigotimes_{i=1}^3 \left(\xymatrix{ \C[x]\cdot e_i \ar[r]^{0\cdot} & \C[x]}\right),$
so the Euler--Koszul homology of $S_\nothing$ at $\beta$ is
\[
\HH_\bullet(S_\nothing,\beta) =
    \textstyle\bigwedge^\bullet\left(\bigoplus_{i=1}^3 \C[x] \cdot e_i\right).
\]
For the face $F=[a_1\ a_2]$ of $A$, $(A,\beta)$ is again already 
in the desired form, so take $g_F$ to be the identity matrix and write  
$\C[x]\<\del_1,\del_2\>$ in place of
$\C[x_{F^c}] \otimes_\C D_F \otimes_{R_F} S_F$. 
Then Proposition~\ref{prop-str CM EK complex} implies that
\begin{align*}
\HH_q(S_{F},\beta) &=
    \begin{cases}
        \C[x]\<\del_1,\del_2\> & \text{if $q=0$,}\\
        \C[x]\<\del_1,\del_2\>\cdot e_3 & \text{if $q=1$,}\\
        0 & \text{otherwise.}
    \end{cases}
\intertext{For $G_1=[a_1]$ and $g_{G_1}$ as the identity matrix,}
\HH_q(S_{G_1},\beta) &=
    \begin{cases}
        \C[x]\<\del_1\> & \text{if $q=0$,}\\
        \C[x]\<\del_1\>\cdot e_2 \oplus \C[x]\<\del_1\>\cdot e_3 & \text{if $q=1$,}\\
        \C[x]\<\del_1\>\cdot e_2\wedge e_3 & \text{if $q=2$,}\\
        0 & \text{otherwise.}
    \end{cases}
\intertext{For the face $G_2 = [a_2]$, setting 
$g_{G_2} = \left[\begin{smallmatrix}
        1&\phantom{-}0&0\\
        1&-1&0\\
        0&\phantom{-}0&1 \end{smallmatrix}\right]$
yields the decomposition}
\HH_q(S_{G_2},\beta) &=
    \begin{cases}
        \C[x]\<\del_2\> & \text{if $q=0$,}\\
        \C[x]\<\del_2\>\cdot (e_1-e_2) \oplus \C[x]\<\del_1\>\cdot e_3 & \text{if $q=1$,}\\
        \C[x]\<\del_2\>\cdot (e_1-e_2)\wedge e_3 & \text{if $q=2$,}\\
        0 & \text{otherwise.}
    \end{cases}
\end{align*}
\end{example}

\begin{lemma}\label{lemma-EK morph}
Let $G\preceq F$ be faces of $A$,
$N$ be a toric $S_F$-module, and $L$ be a toric $S_G$-module.
Regard $N$ and $L$ as toric $S_A$-modules via the natural maps
$S_A\twoheadrightarrow S_F\twoheadrightarrow S_G$.
Let $\pi:N\rightarrow L$ be a morphism of $S_A$-modules.
Then there is a commutative diagram 
\begin{eqnarray*}
\xymatrix{
\KK_\bullet(N,\beta) \ar[r]^{\KK_\bullet(\pi,\beta)} \ar[d] &
\KK_\bullet(L,\beta) \ar[d] \\
\C[x_{F^c}] \otimes_\C
\KK^{F}_\bullet(N,\beta) \otimes_\Z
\left( \textstyle\bigwedge^\bullet \Z F^\bot \right) \ar[r] &
\C[x_{G^c}] \otimes_\C
\KK^{G}_\bullet(L,\beta) \otimes_\Z
\left( \textstyle\bigwedge^\bullet \Z G^\bot \right)
}
\end{eqnarray*}
with vertical maps as in \eqref{12}.
\end{lemma}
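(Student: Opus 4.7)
The plan is to realize the top horizontal arrow via functoriality of the Euler--Koszul complex, the two vertical arrows as the quasi-isomorphisms supplied by Proposition~\ref{prop-str CM EK complex} (applied to $N$ over $F$ and to $L$ over $G$), and then to define the bottom arrow by transporting $\KK_\bullet(\pi,\beta)$ across these decompositions. Implicit throughout is that $\beta\in\C G\subseteq\C F$, so Proposition~\ref{prop-str CM EK complex} is applicable on both sides of the square.

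First I would build the top arrow: since $\pi$ is $S_A$-linear, it extends to a $D$-linear map $D\otimes_R N\to D\otimes_R L$ that commutes with every Euler operator $E_i-\beta_i$, and therefore induces the morphism of Koszul complexes $\KK_\bullet(\pi,\beta):\KK_\bullet(N,\beta)\to\KK_\bullet(L,\beta)$. The two vertical arrows are then the quasi-isomorphisms constructed in the proof of Proposition~\ref{prop-str CM EK complex}, each using a matrix $g_F,g_G\in\text{GL}_d(\Z)$ that puts the relevant Euler operators in block form.

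The bottom arrow is built by exploiting the containments forced by $G\preceq F$: one has $F^c\subseteq G^c$, $\Z F^\bot\subseteq\Z G^\bot$, and, since $L$ is also an $S_F$-module via $S_F\twoheadrightarrow S_G$, a tautological reduction map $\KK^F_\bullet(L,\beta)\to\KK^G_\bullet(L,\beta)$ arising from the fact that $\del_i$ acts as zero on $L$ for each $a_i\in F\setminus G$. Explicitly, the bottom arrow is the tensor product of the inclusion $\C[x_{F^c}]\hookrightarrow\C[x_{G^c}]$, the composite of $\KK^F_\bullet(\pi,\beta)$ with this tautological reduction to $\KK^G_\bullet(L,\beta)$, and the wedge of the inclusion $\Z F^\bot\hookrightarrow\Z G^\bot$.

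The main obstacle is that the lexicographically first index sets $\cI_F,\cI_G$ chosen in Notation~\ref{not-HHF} need not satisfy $\cI_G\subseteq\cI_F$, so the matrices $g_F,g_G$ produced by the proof of Proposition~\ref{prop-str CM EK complex} are in general unrelated. To overcome this, I would first verify that the quasi-isomorphism class of the decomposition in \eqref{12} is insensitive to the choice of $g$: any two admissible choices differ by an element of $\text{GL}_d(\Z)$ stabilizing $\C F$ setwise, and such a change of variables identifies the two resulting decomposed complexes via a degree-preserving isomorphism that intertwines the Koszul differentials described in Remark~\ref{remark-diffl in CM EK complex}. Having secured that freedom, I would then pick compatible $g_F,g_G$ with $\cI_G\subseteq\cI_F$ by extending a splitting of $\C G$ to one of $\C F$ and then of $\C^d$. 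With this compatible choice in hand, commutativity of the square reduces to a summand-by-summand bookkeeping: on each $f\otimes a\otimes b$ the two composites agree after applying $\pi$ to $a$ and the inclusions to $f$ and $b$, while the differentials match by Remark~\ref{remark-diffl in CM EK complex} since $\pi$ is $S_A$-linear.
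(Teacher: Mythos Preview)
Your overall architecture is sound and would yield a valid proof, but the ``main obstacle'' you identify is not actually an obstacle, and this is precisely the point the paper's proof exploits. The lexicographic convention in Notation~\ref{not-HHF} forces $\cI_G\subseteq\cI_F$ automatically. Indeed, on $D\otimes_R N$ the operator $E_i$ reduces to $\sum_{a_j\in F}a_{ij}x_j\partial_j$, so linear independence of $\{E_i\}_{i\in I}$ there is linear independence of the rows of the submatrix $F$ indexed by $I$; thus $\cI_F$ is the lex-first (equivalently, greedy) basis of the row space of $F$, and similarly for $\cI_G$ and $G$. If $i\in\cI_G$ then row $i$ of $G$ lies outside the span of rows $1,\dots,i-1$ of $G$; since the columns of $G$ are a subset of those of $F$, any linear relation among rows $1,\dots,i$ of $F$ would restrict to one among the corresponding rows of $G$, so row $i$ of $F$ is also outside the span of the earlier rows of $F$, i.e.\ $i\in\cI_F$.

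The paper's proof uses exactly this: with the canonical $\cI_G\subseteq\cI_F$ one factors the bottom route as
\[
\KK^{g_FA}_\bullet(N,\beta)\xrightarrow{\ \pi\ }\KK^{g_FA}_\bullet(L,\beta)\longrightarrow\KK^{g_GA}_\bullet(L,\beta),
\]
where the first map is functoriality of the $g_FA$-graded Euler--Koszul complex and the second is the change of coordinates $g_Gg_F^{-1}$, which fixes the rows indexed by $\cI_G$ and is therefore compatible with the decomposition of Proposition~\ref{prop-str CM EK complex}. Commutativity of the outer square then drops out with no further work. Your detour through ``independence of the choice of $g$'' followed by a hand-built compatible pair is correct but redundant; once you notice that the lex-first choice already delivers $\cI_G\subseteq\cI_F$, the proof collapses to the two-line argument the paper gives.
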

\begin{proof}
By choice of $\cI_F$ and $\cI_G$ in Notation~\ref{not-HHF}
and the corresponding $g_F$ and $g_G$, the diagram
\[
\xymatrix{
\KK_\bullet(N,\beta) \ar[rr]^{\KK_\bullet(\pi,\beta)} \ar[d] &&
\KK_\bullet(L,\beta) \ar[d] \\
\KK^{g_F A}_\bullet(N,\beta) \ar[r] & 
\KK^{g_F A}_\bullet(L,\beta) \ar[r] & \KK^{g_G A}_\bullet(L,\beta)
}
\]
commutes. Hence the result follows from the proof of 
Proposition~\ref{prop-str CM EK complex}. 
\end{proof}

\begin{proposition}\label{prop-str/rk image}
Let $G\preceq F$ be faces of $A$,
$N$ be a maximal Cohen--Macaulay toric $S_F$-module,
and $L$ be a maximal Cohen--Macaulay toric $S_G$-module.
Regard $N$ and $L$ as toric $S_A$-modules via the natural maps
$S_A\twoheadrightarrow S_F\twoheadrightarrow S_G$.
Let $\pi:N\twoheadrightarrow L$ be a surjection of $S_A$-modules.
If $\beta\in\C G$, then
\[
\image \HH_\bullet(\pi,\beta) = \C[x_{G^c}]
    \otimes_\C \HH_0^G(L,\beta) \otimes_\C
    \left(\textstyle\bigwedge^\bullet\Z F^\bot\right)
\]
as a submodule of
$\C[x_{G^c}] \otimes_\C \HH_0^G(L,\beta) \otimes_\C
    \left(\textstyle\bigwedge^\bullet\Z G^\bot\right)$.
\end{proposition}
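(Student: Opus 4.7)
My plan is to reduce the computation of $\image\HH_\bullet(\pi,\beta)$ to an analysis of a map between tensor-product decompositions, factor by factor. Since $\beta\in\C G\subseteq\C F$, Proposition~\ref{prop-str CM EK complex} applies to both $N$ (as a maximal Cohen--Macaulay $S_F$-module) and $L$ (as a maximal Cohen--Macaulay $S_G$-module), giving
\[
\HH_\bullet(N,\beta) = \C[x_{F^c}] \otimes_\C \HH_0^F(N,\beta) \otimes_\C \left(\textstyle\bigwedge^\bullet\Z F^\bot\right)
\]
and the analogous identity for $L$ with $G$ in place of $F$. By Lemma~\ref{lemma-EK morph}, the map $\HH_\bullet(\pi,\beta)$ is induced by a map $\psi$ between the decomposed complexes, so computing $\image\HH_\bullet(\pi,\beta)$ reduces to computing the image of $\psi$ factor by factor.

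For the exterior algebra factor, the containment $G\preceq F$ of column sets of $A$ gives $\C G\subseteq\C F$, hence $F^\bot\subseteq G^\bot$ and $\Z F^\bot\subseteq\Z G^\bot$. Tracking through the gauge matrices $g_F$ and $g_G$ from the proof of Proposition~\ref{prop-str CM EK complex} --- which generate $\Z F^\bot$ and $\Z G^\bot$ by $\{g_F^{-1}e_i\}_{i\notin\cI_F}$ and $\{g_G^{-1}e_j\}_{j\notin\cI_G}$ respectively --- I expect the exterior-algebra component of $\psi$ to be the natural inclusion $\bigwedge^\bullet\Z F^\bot\hookrightarrow\bigwedge^\bullet\Z G^\bot$, contributing precisely $\bigwedge^\bullet\Z F^\bot$ to the image.

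For the remaining factors, the key observation is that $L$ is an $S_G$-module, so $\del_i$ annihilates $L$ for each $a_i\in F\setminus G$. In the isomorphism $D\otimes_R L\cong \C[x_{G^c}]\otimes_\C D_G\otimes_{R_G}L$ underlying Proposition~\ref{prop-str CM EK complex}, this forces $x_i$ with $a_i\in F\setminus G$ to sit in the outer polynomial ring rather than inside $D_G$. Under $\psi$, these variables get shuffled out of the Weyl-algebra middle factor on the $N$-side into the polynomial factor on the $L$-side, producing all of $\C[x_{G^c}]=\C[x_{F^c}]\otimes_\C\C[x_{F\setminus G}]$ in the image. Combining this with the surjectivity of $\pi$, which propagates through to surjectivity on $\HH_0^G(L,\beta)$, shows that the first two tensor factors of $\psi$ together induce a surjection $\C[x_{F^c}]\otimes\HH_0^F(N,\beta)\twoheadrightarrow \C[x_{G^c}]\otimes\HH_0^G(L,\beta)$.

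Assembling the three analyses yields the stated identification of $\image\HH_\bullet(\pi,\beta)$. The main obstacle, I expect, is making rigorous the shuffling of the $x_{F\setminus G}$ variables and verifying that $\psi$ really does split as a tensor product of the three factor maps described above, rather than merely as some linear map between tensor products. This amounts to unpacking the isomorphism $D\otimes_R M\cong\C[x_{H^c}]\otimes_\C D_H\otimes_{R_H}M$ for $H=F,G$ and $M=N,L$, and verifying compatibility under $\pi$ via the intermediate complexes $\KK^{g_F A}_\bullet$ and $\KK^{g_G A}_\bullet$ appearing in the proof of Lemma~\ref{lemma-EK morph}.
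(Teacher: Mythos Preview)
Your approach is essentially correct, but the paper takes a cleaner route that sidesteps the obstacle you identify. Instead of mapping directly from the $F$-decomposition of $\HH_\bullet(N,\beta)$ to the $G$-decomposition of $\HH_\bullet(L,\beta)$ and then worrying whether $\psi$ splits as a tensor product of three factor maps, the paper observes that $L$ is also an $S_F$-module (via $S_F\twoheadrightarrow S_G$), and applies Proposition~\ref{prop-str CM EK complex} with $F$ to \emph{both} sides. Working in the $g_F A$-grading, one gets
\[
\HH^{A'}_\bullet(N,\beta) = \C[x_{F^c}] \otimes_\C \HH^F_0(N,\beta) \otimes_\Z \textstyle\bigwedge^\bullet\Z F^\bot
\quad\text{and}\quad
\HH^{A'}_\bullet(L,\beta) = \C[x_{F^c}] \otimes_\C \HH^F_\bullet(L,\beta) \otimes_\Z \textstyle\bigwedge^\bullet\Z F^\bot,
\]
so the map $\HH_\bullet(\pi,\beta)$ is the identity on the outer factors $\C[x_{F^c}]$ and $\bigwedge^\bullet\Z F^\bot$, and only the middle factor $\HH^F_0(N,\beta)\to\HH^F_\bullet(L,\beta)$ requires analysis. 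Since $\HH^F_i(N,\beta)=0$ for $i>0$, this map lands in $\HH^F_0(L,\beta)$ and is surjective there by right exactness. Now one applies the $G$-decomposition to $\HH^F_0(L,\beta)$ alone, obtaining $\HH^F_0(L,\beta)=\C[x_{F\setminus G}]\otimes_\C\HH^G_0(L,\beta)$, and reassembling gives the result.

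The advantage of the paper's route is that the ``shuffling'' of $x_{F\setminus G}$ variables and the inclusion $\bigwedge^\bullet\Z F^\bot\hookrightarrow\bigwedge^\bullet\Z G^\bot$ never need to be verified simultaneously against the full map $\psi$; they appear only after the image has been identified as a single tensor factor, where both are automatic. Your approach would ultimately yield the same result, but proving that $\psi$ genuinely splits as a tensor product across the two different gauges $g_F,g_G$ is extra work that the paper's ordering avoids.
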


\begin{example}
(Continuation of Example~\ref{example-EK CM comp})
\label{example-EK CM comp im} 
The surjection of face rings given by 
$\pi:S_{F}\twoheadrightarrow S_{G_1}$
induces the following image in Euler--Koszul homology:
\[
\image\HH_q(\pi,\beta) =
    \begin{cases}
    \C[x]\<\del_1\> & \text{if $q=0$,}\\
    \C[x]\<\del_1\>\cdot e_3 & \text{if $q=1$,}\\
    0 & \text{otherwise.}
    \end{cases} 
\]
\end{example}

\begin{proof}[Proof of Proposition~\ref{prop-str/rk image}]
With $A' = g_F A$, the image of $\HH_\bullet(\pi,\beta)$ is
isomorphic to the image of $\HH^{A'}_\bullet(\pi,\beta)$.
By Proposition~\ref{prop-str CM EK complex}, there are decompositions
\begin{align*}
\HH^{A'}_\bullet(N,\beta) &=
    \C[x_{F^c}] \otimes_\C \HH^F_0(N,\beta) \otimes_\Z
    \left(\textstyle\bigwedge^\bullet \Z F^\bot \right)
\intertext{and}
\HH^{A'}_\bullet(L,\beta) &=
    \C[x_{F^c}] \otimes_\C \HH^F_\bullet(L,\beta) \otimes_\Z
    \left(\textstyle\bigwedge^\bullet \Z F^\bot \right),
\end{align*}
so it is enough to find the image of $\HH^F_0(N,\beta)$ 
as a submodule of $\HH^F_\bullet(L,\beta)$. 
The result now follows because the sequence
\[
\xymatrix{
    \HH_0^F(N,\beta) \ar[rr]^{\HH_0^F(\pi,\beta)} &&
    \HH_0^F(L,\beta) \ar[r] & 0}
\]
is exact, 
$\HH_0^F(L,\beta) = \C[x_{F\setminus G}] \otimes_\C \HH^G_0(L,\beta)$, 
and $\HH^F_i(N,\beta)=0$ for $i>0$.
\end{proof}

\begin{example}(Continuation of Example~\ref{example-EK CM comp im})
\label{example-EK CM comp im intersect} 
Let $\pi_i:S_{G_i}\twoheadrightarrow S_\nothing$ for $i=1,2$. Then
\begin{align*}
\image \HH_q(\pi_1,\beta) &=
    \begin{cases}
        \C[x] & \text{if $q=0$,}\\
        \C[x]\cdot e_2 \oplus \C[x]\cdot e_3 & \text{if $q=1$,}\\
        \C[x]\cdot e_2\wedge e_3 & \text{if $q=2$,}\\
        0 & \text{otherwise,}
    \end{cases}
\intertext{and}
\image \HH_q(\pi_2,\beta) &=
    \begin{cases}
        \C[x] & \text{if $q=0$,}\\
        \C[x]\cdot (e_2-e_1) \oplus \C[x]\cdot e_3 & \text{if $q=1$,}\\
        \C[x]\cdot (e_2-e_1) \wedge e_3 & \text{if $q=2$,}\\
        0 & \text{otherwise.}
    \end{cases}
\end{align*}
The intersection of the images of Euler--Koszul homology at $\beta$ 
applied to $\pi_1$ and $\pi_2$ is
\[
[\image \HH_q(\pi_1,\beta)]\cap[\image \HH_q(\pi_2,\beta)] =
    \begin{cases}
    \C[x] & \text{if $q=0$,}\\
    \C[x]\cdot e_3 & \text{if $q=1$,}\\
    0 & \text{otherwise}
    \end{cases}
\]
because $\Z G_1^\bot \cap \Z G_2^\bot = \Z\cdot e_3$. 
\end{example}

We close this section with an observation that is vital to our rank jumps computations. 
For faces $F_1,F_2\preceq A$, set $G=F_1\cap F_2$.
Let $N_i$ be maximal Cohen--Macaulay toric $S_{F_i}$-modules,
$L$ be a maximal Cohen--Macaulay toric $S_G$-module, and
$\pi_i: N_i \twoheadrightarrow L$ be $S_A$-module surjections. 
Suppose that $\beta\in\C G$. 
Using the equality $\Z F^\bot\cap \Z G^\bot=\Z [F\cup G]^\bot$, 
Proposition~\ref{prop-str/rk image} implies that 
\[
\begin{array}{c}
\image \HH_i(\pi_1,\beta) \cap \image\HH_i(\pi_2,\beta) 
    =  \C[x_{G^c}] \otimes_\C \HH_0^G(L,\beta) \otimes_\C
    \left(\textstyle\bigwedge^i \Z [F\cup G]^\bot\right),
\end{array}
\]
which has rank
\begin{eqnarray*}
\binom{\codim_{\C^d}(\C F_1 + \C F_2)}{i} \cdot \rank \HH_0^G(L,\beta). 
\end{eqnarray*}

%%%%%%%%%%%%%%%%%%%%%%%%%%%%%%%%%%%%%%%%%
\section{Stratifications of the exceptional arrangement}%%%%%%%%%%%%%%
%%%%%%%%%%%%%%%%%%%%%%%%%%%%%%%%%%%%%%%%%
\label{sec:exceptional arrangement}

Let $\dM \subseteq \Z^d$ be a nonempty $\N A$--monoid 
(see Definition~\ref{def:monoid}), so that the nontrivial module 
$M = \C\{\dM\} \subseteq \C\{\Z^d\} \cong S_A[\del_A^{-1}]$ 
is weakly toric (see Example~\ref{ex-M wt}) and 
\begin{align}\label{eq-dM}
\dM = \deg(M). 
\end{align}
Since $\dM$ is an $\N A$-monoid, the generic rank of $\HH_0(M,-)$ is $\vol(A)$. 
The {\it rank jump} of $M$ at $\beta$ is the nonnegative integer
\begin{align*}
j(\beta)&=\rank\HH_0(M,\beta)-\vol(A), 
\intertext{
and the {\it exceptional arrangement} associated to $M$ is the set
}
\EE_A(M)&=\{\beta\in\CC^d\mid j(\beta)>0\}
\end{align*}
of parameters with nonzero rank jump. 
By \cite{MMW} and \cite{EKDI}, the exceptional arrangement 
can be described in terms of certain 
$\ext$ modules involving $M$, namely
\begin{eqnarray}\label{13}
\EE_A(M) = 
	- \bigcup_{i=0}^{d-1} \qdeg\left( \ext_{R}^{n-i}(M,R)( - \varepsilon_A) \right),
\end{eqnarray} 
where $\varepsilon_A = \sum_{i=1}^n a_i$.
It follows that $\EE_A(M)$ is a union of translates
of linear subspaces spanned by the faces of $A$, see \cite[Corollary~9.3]{MMW}.

We begin our study of $j(\beta)$ with the short exact sequence 
\begin{eqnarray}\label{1}
    0 \rightarrow M \rightarrow
    S_A[\del_A^{-1}] \rightarrow Q \rightarrow 0.
\end{eqnarray}
While $Q$ is not a Noetherian $S_A$-module, it is a filtered limit of 
Noetherian $\Z^d$-graded $S_A$-modules and is therefore weakly toric 
(see Definition~\ref{def-wtoric}). 
Thus the {\it ranking arrangement} of $M$
\[
\cR_A(M) = \qdeg(Q)
\]  
is an infinite union of translates of linear subspaces of $\C^d$ 
spanned by proper faces of $A$.
Since $S_A[\del_A^{-1}]$ is a maximal Cohen--Macaulay 
$S_A$-module, Theorem~\ref{thm-CM HH rel} 
implies that $\HH_i(S_A[\del_A^{-1}],\beta)=0$ for all $i>0$.
Moreover, by \cite[Theorem~4.2]{okuyama},  
$\rank \HH_0(S_A[\del_A^{-1}],\beta)=\vol(A)$ for all $\beta$.
Examination of the long exact sequence in Euler--Koszul 
homology from \eqref{1} reveals that
\begin{equation}\label{2}
    j(\beta)=
    \rank\HH_1(Q,\beta)-\rank\HH_0(Q,\beta).
\end{equation}
This implies that for $\beta\in\EE_A(M)$, 
$\HH_1(Q,\beta)$ is nonzero.
Therefore there is an inclusion $\EE_A(M)\subseteq\cR_A(M)$.
We make this relationship precise in Theorem~\ref{thm-str comparison}.

\begin{lemma}\label{lemma-finite inside}
Let $v\in\Z^d$. The number of irreducible components of
$\cR_A(S_A)$ which intersect $v+\RR_{\geq0}A$ is finite.
\end{lemma}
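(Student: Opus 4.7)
My plan is to decompose $Q = S_A[\del_A^{-1}]/S_A$ through the normalization $\wt S_A$ and handle the two resulting pieces separately. From the short exact sequence of weakly toric $S_A$-modules
\[
0 \to \wt S_A/S_A \to Q \to S_A[\del_A^{-1}]/\wt S_A \to 0,
\]
degree-wise additivity together with the Zariski closure definition yields $\cR_A(S_A) = \qdeg(Q) = \qdeg(\wt S_A/S_A) \cup \qdeg(S_A[\del_A^{-1}]/\wt S_A)$. Since $\wt S_A$ is a finitely generated $S_A$-module, $\wt S_A/S_A$ is toric, so by Definition~\ref{def-qdeg} its quasidegree set is a finite union of translates of subspaces $\C F$. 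Hence $\qdeg(\wt S_A/S_A)$ contributes only finitely many irreducible components to $\cR_A(S_A)$, and so at most finitely many such components can meet $v + \R_{\geq 0}A$.

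Next I identify the components of $\qdeg(S_A[\del_A^{-1}]/\wt S_A)$. Let $p_1,\ldots,p_r$ be the primitive integral support functions of the facets $F_1,\ldots,F_r$ of $A$, so that $\wt{\N A} = \{a\in\Z^d : p_i(a)\geq 0 \text{ for all } i\}$ and hence
\[
\Z^d \setminus \wt{\N A} \;=\; \bigcup_{i=1}^r \bigcup_{k > 0}\, \{a \in \Z^d : p_i(a) = -k\}.
\]
Each slice $\{p_i = -k\} \cap \Z^d$ is Zariski dense in the hyperplane $\{p_i = -k\} \subset \C^d$, a translate of $\C F_i$. For the reverse containment I exploit the weakly toric direct limit structure: a finitely generated subpiece of $S_A[\del_A^{-1}]/\wt S_A$ admits a filtration with toric quotients of the form $S_G(-b)$, and the inclusion $b + \N G \subseteq \Z^d \setminus \wt{\N A}$ forces $G$ to lie in some facet $F_i$ with $p_i(b) < 0$, because $p_i$ is constant on $b + \N G$ exactly when $G \subseteq F_i$. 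The resulting component $b + \C G$ is then contained in the hyperplane $\{p_i = p_i(b)\}$, so the irreducible components of $\qdeg(S_A[\del_A^{-1}]/\wt S_A)$ are precisely the hyperplanes $\{p_i = -k\}$ with $k > 0$.

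To count hyperplanes meeting $v + \R_{\geq 0}A$, fix a facet $F_i$ and observe that $\{p_i = -k\}$ meets $v + \R_{\geq 0}A$ if and only if some $w \in \R_{\geq 0}A$ satisfies $p_i(v + w) = -k$, equivalently $p_i(w) = -k - p_i(v)$. Since $p_i \geq 0$ on $\R_{\geq 0}A$, this has a solution only for $k \leq -p_i(v)$, yielding at most $\max\{0,\,-p_i(v)\}$ valid integers $k$. Summing over the finitely many facets and adjoining the finitely many components coming from $\qdeg(\wt S_A/S_A)$ produces the desired finite bound.

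The main obstacle will be the reverse containment in the second paragraph: in principle $\qdeg(S_A[\del_A^{-1}]/\wt S_A)$ could contain lower-dimensional irreducible components $\alpha + \C G$ not contained in any facet hyperplane $\{p_i = -k\}$. Ruling this out rests on the observation that $p_i$ is constant along $b + \N G$ whenever $G \subseteq F_i$, which forces any such tentative component to lie inside one of the hyperplanes. Once this structural description is secured, the counting step is just the elementary linear-functional inequality above.
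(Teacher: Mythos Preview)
Your argument is correct and complete, but it takes a genuinely different route from the paper's proof. The paper localizes at the outset: it observes that any component of $\cR_A(S_A)$ meeting $v+\R_{\geq0}A$ must appear among the quasidegrees of the single toric module $\wt S_A(-v)/(S_A\cap\wt S_A(-v))$, and finiteness then follows immediately because $\wt S_A(-v)$ is finitely generated over $S_A$. You instead work globally: you decompose $Q$ through $\wt S_A$, show that $\qdeg(\wt S_A/S_A)$ contributes finitely many components while $\qdeg(S_A[\del_A^{-1}]/\wt S_A)$ consists precisely of the facet hyperplanes $\{p_i=-k\}$ for $k>0$, and then count how many of the latter can meet the translated cone via the elementary inequality $k\leq -p_i(v)$.

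The paper's approach is shorter and avoids classifying all components of $\cR_A(S_A)$. Your approach is more informative: it actually identifies the full ranking arrangement as $\qdeg(\wt S_A/S_A)$ together with the countable family of facet hyperplanes, and it yields the explicit bound $\sum_i \max\{0,-p_i(v)\}$ plus the (finite) number of components of $\qdeg(\wt S_A/S_A)$. This explicit description in fact anticipates Theorem~\ref{thm-str comparison}, where the hyperplane part you found becomes $\cZ_A(M)$. The compressed step you flagged---that $b+\N G\subseteq\Z^d\setminus\wt{\N A}$ forces some facet $F_i\supseteq G$ with $p_i(b)<0$---is correct: for each facet not containing $G$ the functional $p_i$ is unbounded above on $b+\N G$, so deep in $b+\N G$ only the facets containing $G$ can witness failure of membership in $\wt{\N A}$, and for those $p_i$ is constant equal to $p_i(b)$.
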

\begin{proof}
View $S_A$ and its shifted saturation $\wt{S}_A(-v)$ 
as graded submodules of $S_A[\del_A^{-1}]$.
To see that the intersection $\cR_A(S_A)\cap(v+\R_{\geq0} A)$ 
involves only a finite number of irreducible components 
of $\cR_A(S_A)$, it is enough to show that the arrangement 
given by the quasidegrees of the module 
$\wt{S}_A(-v)/(S_A\cap\wt{S}_A(-v))$ has finitely many 
irreducible components. 
This follows since $\wt{S}_A(-v)$ is toric.
\end{proof}

Recall from \eqref{eq-dM} that $\dM = \deg(M)$. 
For $b\in\Z^d$ and $F\preceq A$, let 
$\nabla(M,b)=\{F\preceq A\mid b\in\dM+\Z F\}$. 

\begin{lemma}\label{lemma-MM05}
Let $M\subseteq S_A[\del_A^{-1}]$ be a weakly toric module, $b\in\Z^d$, 
$F\preceq A$ be a face of codimension at least two, 
and $\alpha\in \Z^d$ be an interior vector of $\N F$.
If $F$ is maximal among faces of $A$ not in $\nabla(M,b)$, 
then for all sufficiently large positive integers $m$, the vector
$b-m\alpha \in \EE_A(M)$ is an exceptional degree of $M$.
\end{lemma}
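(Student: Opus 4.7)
The plan is to realize $b - m\alpha$ as a quasidegree of one of the Ext modules in the characterization \eqref{13} of $\EE_A(M)$. By the $\Z^d$-graded local duality theorem on the Gorenstein ring $R = \C[\del]$ (with canonical module $\omega_R \cong R(-\varepsilon_A)$), the quasidegree set of $\ext^{n-i}_R(M,R)(-\varepsilon_A)$ equals the negative of that of the local cohomology module $H^i_{\mathfrak{m}}(M)$, where $\mathfrak{m} = \langle \del_1, \ldots, \del_n \rangle$. Thus it suffices to find some $0 \leq i \leq d-1$ such that $b - m\alpha \in \qdeg(H^i_{\mathfrak{m}}(M))$ for all $m \gg 0$.

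The combinatorial hypotheses translate into an associated-prime statement. Because $\dM$ is an $\N A$-monoid, $F \notin \nabla(M,b)$ is equivalent to $(b + \Z F) \cap \dM = \emptyset$, so the entire translate $b + \Z F$ lies in $\deg(Q)$ for $Q := S_A[\del_A^{-1}]/M$. The maximality of $F$ gives $(b + \N F') \cap \dM \neq \emptyset$ for every face $F' \succ F$. The monomial annihilator $J_b := \{\del^u \in S_A : b + Au \in \dM\}$ of the nonzero class $\overline{t^b} \in Q$ therefore satisfies $J_b \cap \C[\N F] = 0$ but $J_b \cap \C[\N F'] \neq 0$ for every $F' \succ F$, exhibiting $P_F = \langle \del_{F^c}\rangle$ as a face prime of $S_A$ that is (possibly after shifting within $b + \N F$) associated to the monomial quotient $S_A(-b)/J_b \hookrightarrow Q$.

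Since $S_A[\del_A^{-1}]$ is maximal Cohen--Macaulay over $S_A$, $H^i_{\mathfrak{m}}(S_A[\del_A^{-1}]) = 0$ for $i < d$, so the long exact sequence of local cohomology associated with \eqref{1} yields isomorphisms $H^{i-1}_{\mathfrak{m}}(Q) \cong H^i_{\mathfrak{m}}(M)$ for $1 \leq i \leq d-1$. The existence of an interior vector $\alpha$ of $\N F$ forces $\dim(F) \geq 1$, so $i := \codim(F)$ satisfies $2 \leq i \leq d-1$. The associated-prime datum for $P_F$ in $S_A(-b)/J_b$, combined with the Ishida (\v{C}ech) complex on $M$ at the face primes of $A$, produces a nonzero class in $H^{i-1}_{\mathfrak{m}}(Q)$ whose degree-set Zariski closure contains $b + \C F$. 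Transporting through the isomorphism yields $b + \C F \subseteq \qdeg(H^i_{\mathfrak{m}}(M))$. Since $\alpha \in \Z F$, each $b - m\alpha$ lies in $b + \C F$, and Lemma~\ref{lemma-finite inside} ensures that only finitely many other components of the ranking arrangement intersect the ray $\{b - m\alpha\}_{m \geq 0}$. Hence $b - m\alpha \in \EE_A(M)$ for $m$ sufficiently large.

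The main obstacle is producing the nonzero class in $H^{\codim(F)-1}_{\mathfrak{m}}(Q)$ with quasidegree support containing the full translate $b + \C F$: one must verify that the contribution from the associated prime $P_F$ of $S_A(-b)/J_b$ survives in the Ishida complex of $M$ without cancellation from the strictly larger faces $F' \succ F$, whose interaction with $\dM$ is constrained by the maximality hypothesis. Once this construction is in place, the chain of isomorphisms and the interior condition on $\alpha$ deliver the conclusion.
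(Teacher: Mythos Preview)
The paper's proof is a one-line citation: the statement is \cite[Lemma~14]{MM05} when $M=S_A$ and $A$ is homogeneous, and the paper asserts that the same argument, combined with $\Z^d$-graded local duality, yields the general case. Your opening reduction from \eqref{13} to a statement about $H^{i}_{\mathfrak m}(M)$ via local duality is therefore exactly the intended first move.

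Where your outline diverges from the cited argument is in routing through $Q$ and aiming for a single class in $H^{\codim(F)-1}_{\mathfrak m}(Q)$ whose quasidegree set already contains the entire translate $b+\C F$. That is stronger than what the lemma asserts and it sidesteps the actual mechanism. In \cite{MM05} one works directly with the Ishida complex for $M$ and examines its degree-$(b-m\alpha)$ piece: the summand indexed by a face $G$ is nonzero precisely when $G\in\nabla(M,b-m\alpha)$. Because $\alpha$ is interior to $\N F$, every facet $H\not\succeq F$ has $p_H(\alpha)>0$, so for $m\gg 0$ one gets $G\notin\nabla(M,b-m\alpha)$ whenever $F\not\preceq G$; for $G\succeq F$ one has $m\alpha\in\Z G$, hence $G\in\nabla(M,b-m\alpha)$ iff $G\in\nabla(M,b)$, which by maximality holds for every $G\succ F$ but fails for $G=F$. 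The graded piece thus reduces to the cochain complex of the open star of $F$ with $F$ deleted, a sphere of the appropriate dimension, and its nonzero cohomology sits in a single degree lying in the range $0\le i\le d-1$ of \eqref{13}. This is exactly the ``survival without cancellation'' you flag as the main obstacle; the hypothesis $m\gg 0$ is what resolves it, not a technicality to be tidied up afterward. Your sketch gives no reason why the class you propose should land in cohomological degree $\codim(F)-1$ rather than elsewhere.

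Two smaller issues. Your concluding appeal to Lemma~\ref{lemma-finite inside} is misplaced: that lemma bounds components of $\cR_A(S_A)$ meeting a translated cone and plays no role here, since once $b-m\alpha\in\qdeg H^{i}_{\mathfrak m}(M)$ for some $i\le d-1$ the conclusion is immediate from \eqref{13}. And your assertion that an interior $\alpha$ forces $\dim F\ge 1$ is false (take $F=\varnothing$, $\alpha=0$); that edge case would need separate bookkeeping in your argument.
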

\begin{proof}
This is \cite[Lemma~14]{MM05} when $M=S_A$ and $A$ is homogeneous. 
(The matrix $A$ is called {\it homogeneous} when the vector
$(1,1,\dots,1)$ is in the $\Q$-row span of $A$.) 
The same argument yields this generalization 
by $\Z^d$-graded local duality, see \cite[Section~3.5]{bruns-herzog}.
\end{proof}

\begin{theorem}\label{thm-str comparison}
Let $M\subseteq S_A[\del_A^{-1}]$ be a weakly toric module.
The ranking arrangement $\cR_A(M)$ contains the 
exceptional arrangement $\EE_A(M)$ and
\begin{eqnarray*}
\cR_A(M)=\EE_A(M)\cup\cZ_A(M),
\end{eqnarray*}
where $\cZ_A(M)$ is pure of codimension 1.
\end{theorem}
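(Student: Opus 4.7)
The containment $\EE_A(M) \subseteq \cR_A(M)$ was essentially recorded just before the theorem: from \eqref{2}, the inequality $j(\beta) > 0$ forces $\HH_1(Q,\beta) \neq 0$, so Proposition~\ref{prop-HH qis 0} places $\beta$ into $\qdeg(Q) = \cR_A(M)$. The remaining work is to define
\[
\cZ_A(M) := \bigcup \{\, W : W \text{ is a codimension one irreducible component of } \cR_A(M) \,\}
\]
and then to show that every irreducible component of $\cR_A(M)$ of codimension at least two already lies inside $\EE_A(M)$. Since $\cR_A(M)$ is itself a union of translates $b + \C F$ for proper faces $F \preceq A$, each of its components has codimension at least one, so this immediately gives $\cR_A(M) = \EE_A(M) \cup \cZ_A(M)$ with $\cZ_A(M)$ pure of codimension one.

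Let $W = b + \C F$ be an irreducible component of $\cR_A(M) = \qdeg(Q)$ with $\codim(F) \geq 2$. Since $\deg(Q) = \Z^d \setminus \dM$ and $W$ is a component, $F$ must be maximal among faces of $A$ with $b \notin \dM + \Z F$. I first plan to observe that the same property transfers to every $b' \in b + \Z F$: the set $\dM + \Z F$ is a union of $\Z F$-cosets, so $b' \notin \dM + \Z F$ iff $b \notin \dM + \Z F$; and if some face $F' \supsetneq F$ satisfied $b' \notin \dM + \Z F'$, the inclusion $\Z F \subseteq \Z F'$ would put $b$ and $b'$ in the same $\Z F'$-coset, forcing $b \notin \dM + \Z F'$ and contradicting the maximality of $F$ at $b$. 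Fixing an interior integer vector $\alpha$ of $\N F$, Lemma~\ref{lemma-MM05} then supplies integers $m(b') \geq 0$ with
\[
b' - m\alpha \in \EE_A(M) \quad \text{for all } m \geq m(b').
\]

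To promote this sampling to the inclusion $W \subseteq \EE_A(M)$, I will use that $\EE_A(M)$ is Zariski closed, which follows from \eqref{13} together with \cite[Corollary~9.3]{MMW}. Set
\[
T = \bigcup_{b' \in b + \Z F} \bigl\{\, b' - m\alpha : m \geq m(b') \,\bigr\} \subseteq \EE_A(M) \cap (b + \Z F).
\]
Any polynomial vanishing on $T$ vanishes on an infinite arithmetic progression inside each line $b' - \C\alpha$, hence on the whole line; letting $b'$ range over $b + \Z F$, it vanishes on $(b + \Z F) + \C\alpha$. Because $\Z F$ is Zariski dense in $\C F$, this union is Zariski dense in $b + \C F$, whence $W \subseteq \EE_A(M)$, as desired.

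The main obstacle I anticipate is not the Zariski density step, which is largely formal, but the bookkeeping around Lemma~\ref{lemma-MM05}: correctly identifying the irreducible components of $\qdeg(Q)$ with pairs $(b,F)$ in which $F$ is maximal, and verifying that this maximality propagates uniformly along all of $b + \Z F$ so that the lemma can be invoked coset by coset. No additional homological input beyond \eqref{2}, Proposition~\ref{prop-HH qis 0}, Lemma~\ref{lemma-MM05}, and the structural description of $\EE_A(M)$ supplied by \eqref{13} should be required.
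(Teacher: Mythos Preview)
Your proof is correct in outline and reaches the same conclusion as the paper via Lemma~\ref{lemma-MM05}, but the organization differs in two places, and there is one step you have not fully justified.

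\textbf{The gap.} When you write ``Let $W = b + \C F$ be an irreducible component \ldots\ $F$ must be maximal among faces of $A$ with $b \notin \dM + \Z F$,'' you are presupposing that the chosen $b$ satisfies $b \notin \dM + \Z F$. This need not hold for an \emph{arbitrary} integer point $b \in W$: if $\Z F \subsetneq \Z^d \cap \C F$, the component $W$ meets several $\Z F$-cosets, and some of them may intersect $\dM$. What is true (and what the paper proves carefully) is that \emph{some} $b \in W \cap \Z^d$ has $b + \wt{\N F} \subseteq \deg(Q)$, and for any such $b$ one checks $b \notin \dM + \Z F$. Your one-line justification ``since $\deg(Q) = \Z^d \setminus \dM$ and $W$ is a component'' is really pointing to the identity $\qdeg(Q) = \bigcup\{\, b + \C F : b \notin \dM + \Z F\,\}$, which is correct but deserves a sentence of proof.

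\textbf{Comparison with the paper.} Once a suitable $b$ is fixed, your deduction of the maximality of $F$ directly from the component property is cleaner than the paper's: the paper separately treats facets $F'$ with $F \preceq F'$ and $F \npreceq F'$, invoking Lemma~\ref{lemma-finite inside} for the latter and possibly moving $b$, whereas your argument shows that any $F' \supsetneq F$ with $b \notin \dM + \Z F'$ would force $b + \C F' \subseteq \qdeg(Q)$ and contradict irreducibility---the facet case analysis is unnecessary. On the other hand, your concluding step is longer: you propagate maximality along $b + \Z F$, apply Lemma~\ref{lemma-MM05} at each coset point, and take a Zariski closure. The paper instead applies Lemma~\ref{lemma-MM05} once at $b$, obtains a line $b + \C\alpha \subseteq \EE_A(M)$, and then uses that each component of $\EE_A(M)$ is a translate of some $\C G$: since $\alpha$ is interior to $\R_{\geq 0}F$ and $\C G \cap \R_{\geq 0}A = \R_{\geq 0}G$, the line forces $F \preceq G$, hence $W \subseteq \EE_A(M)$. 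Both endings are valid; the paper's is shorter once the structure of $\EE_A(M)$ from \eqref{13} is available.
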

\begin{proof} 
We must show that $\EE_A(M)$ contains all irreducible components 
of $\cR_A(M)$ of codimension at least two.
To this end, let $\beta\in\cR_A(M)$ be such that 
$\beta+\C F\subseteq \cR_A(M) = \qdeg(Q)$ 
is an irreducible component with $\codim(F)\geq 2$.
Then there are submodules $M',M''\subseteq Q$ and $b'\in\Z^d$ such that 
$M'/M''\cong S_F(b')$ and $b'+\C F = \beta+\C F$. 
In fact, there is a $b\in\deg(M'/M'')$ with $b + \wt{\N F}\subseteq\deg(Q)$ and 
$b+\C F = \beta+\C F$.
We may further choose $b$ so that $F$ is maximal among faces
of $A$ that are not in the set $\nabla(M,b)$.
To see this, first note that $F\notin\nabla(M,b + r)$ for all $r\in\wt{\N F}$. 
Indeed, for if $(b + \wt{\N F})\cap(\dM+\Z F)\neq\nothing$, then there are 
$a\in b + \wt{\N F}$ and $s\in\N F$ with 
$a + s \in \dM \cap (b + \wt{\N F}) \subseteq \dM \cap \deg(Q) = \nothing$, 
a contradiction.

Since $b+\C F = \beta+\C F$ is an irreducible component of 
$\qdeg(Q)$, it suffices to show that $b$ can be chosen 
so that each facet $F'$ of $A$ is in $\nabla(M,b)$.
First, if $F\npreceq F'$, then by Lemma~\ref{lemma-finite inside}, 
there are at most a finite number of translates of $\C F'$ that 
define components of $\qdeg(Q)$ and intersect 
$b+\wt{\N F}$; write these as $c_1+\C F',\dots,c_k+\C F'$.
If necessary, replace $b$ by a vector $b'\in b+\wt{\N F}$ such that 
$(b'+\wt{\N F})\cap (c_i+\C F')=\nothing$ to assume 
that $F'$ is in $\nabla(M,b)$. 
Note that after such a replacement, it is still true that 
$(b+\wt{\N F})\cap(\dM+\Z F)\neq\nothing$ by the previous paragraph, 
so $F\notin\nabla(M,b)$. 
Next, suppose that $F\preceq F'$.
If $(b+\wt{\N F'})\cap \dM=\nothing$, then 
$b+\wt{\N F'}\subseteq\deg(Q)$,
an impossibility because $b+\C F$ defines an 
irreducible component of $\qdeg(Q)$.
Thus it must be that $(b+\wt{\N F'})\cap \dM\neq\nothing$. 
In this case, $b\in\dM+\Z F'$, so $F'$ is in $\nabla(M,b)$.
Hence every facet $F'$ of $A$ is in $\nabla(M,b)$,
and the claim on the choice of $b$ is established.

Let $\alpha\in\Z^d$ be an interior vector of $\N F$.
Lemma \ref{lemma-MM05} implies that for all sufficiently large integers $m$, 
the vector $b-m\alpha\in\EE_A(M)$. 
Therefore $\beta+\C F = b+\C F\subseteq\EE_A(M)$.
\end{proof}

\begin{notation}\label{not-beta cmpt}
For $\beta\in\C^d$, the \emph{$\beta$-components} 
$\cR_A(M,\beta)$ of the ranking arrangement of $M$ are the union of 
the irreducible components of $\cR_A(M)$ which contain $\beta$.
Since $A$ has a finite number of faces, $\cR_A(M,\beta)$ 
has finitely many irreducible components. 
\end{notation}

By \cite[Porism~9.5]{MMW}, the exceptional arrangement 
$\EE_A(S_A)$ of the $A$-hypergeo\-metric system 
$\HH_0(S_A,\beta) = M_A(\beta)$ 
has codimension at least two. 
In the following example we show that there may be components of 
$\EE_A(S_A)$ which are embedded in 
codimension 1 components of the ranking arrangement $\cR_A(S_A)$. 
In particular, the Zariski closure of $\EE_A(S_A)\setminus\cZ_A(S_A)$ 
may not agree with $\EE_A(S_A)$.

\begin{example}\label{example-hidden}
Let 
\[
A=\left[ \begin{matrix} 
	2 & 3 & 0 & 0 & 1 & 0 & 1\\
	0 & 1 & 2 & 0 & 0 & 1 & 1\\
	0 & 0 & 0 & 1 & 1 & 1 & 1
	\end{matrix}\right]
\]
with $\vol(A) = 15$, 
and label the faces $F=[ a_1\ a_2\ a_3 ]$ and $G = [ a_3 ]$. 
With $\beta = \left[\begin{smallmatrix} 1\\0\\0\end{smallmatrix}\right]$,  
the exceptional arrangement of $M=S_A$ 
is properly contained in a hyperplane component 
of the ranking arrangement: 
\[
\EE_A(S_A)
=\beta + \CC G
\ \subsetneq \
\beta + \CC F =\cR_A(S_A,\beta)
\ \subsetneq  \ \cR_A(S_A).
\]
We will discuss the rank jumps of $M_A(\beta)$ in 
Examples~\ref{example-hidden2} and~\ref{example-hidden3}. 
\end{example}
\smallskip

One goal of Section~\ref{sec:comb of rank} is to understand 
the structure of the sets 
$\EE_A^i(M)=\{\beta\in\CC^d\mid j(\beta)>i\}$. 
We will achieve this by stratifying $\EE_A(M)$ by ranking slabs 
(see Definition~\ref{def-ranking slab}).
Another description of ranking slabs 
(via translates of certain lattices contained in the $\beta$-components 
$\cR_A(M,\beta)$) will be given in Proposition~\ref{prop-zc E}. 

\begin{definition}
Let $X$ and $Y$ be subspace arrangements in $\C^d$. 
We say that a stratification $\cS$ of $X$ \emph{respects} $Y$ 
if for each irreducible component $Z$ of $Y$ and each stratum $S\in \cS$, 
either $S\cap Z = \nothing$ or $S\subseteq Z$. 
\end{definition}

\begin{definition}\label{def-ranking slab}
A \emph{ranking slab} of $M$ is 
a stratum in the coarsest stratification of $\EE_A(M)$ 
that respects each of the following arrangements: 
$\cR_A(M)$ 
and $- \qdeg\left( \ext_{R}^{n-i}(M,R)( - \varepsilon_A) \right)$ 
for $0\leq i < d$.
\end{definition}

Since each of the arrangements used in Definition~\ref{def-ranking slab} 
is determined by the quasidegrees of a weakly toric module, 
the closure of each ranking slab of $M$ is the translate of a 
linear subspace of $\C^d$ that is generated by a face of $A$. 
Corollary~\ref{cor-strat vs equiv} states 
that $j(-)$ is constant on each ranking slab, so 
each $\EE_A^i(M)$ with $i\geq0$ 
is a union of translates of linear subspaces of $\C^d$ 
that are spanned by faces of $A$.
It then follows that the stratification of $\EE_A(M)$ 
by ranking slabs refines its rank stratification. 
While this is generally a strict refinement, 
Examples~\ref{example-hidden3}~and~\ref{example-4diml hidden} 
show that the two stratifications may coincide 
for parameters close enough to the cone $\R_{\geq0}A$.
We wish to emphasize that the rank jump $j(\beta)$ 
is not simply determined by holes within the semigroup $\N A$, 
as can be seen in Example~\ref{ex-first example}. 

\begin{definition}\label{def:slab}
A {\it slab} is a set of parameters in $\C^d$ that lie on a 
unique irreducible component 
of the exceptional arrangement $\EE_A(M)$ \cite{MM05}. 
\end{definition}

We will show by example that rank need not be constant on a slab. 
In Example~\ref{example-4diml hidden}, 
this failure results from ``embedded" components of $\EE_A(M)$, 
while in Example~\ref{example-nonconstant slab}, 
it is due to the hyperplanes of $\cR_A(M)$ that strictly 
refine the arrangement stratification of $\EE_A(M)$.  
Together with Example~\ref{plane-line-4diml}, 
these examples show that each of the 
arrangements listed in Definition~\ref{def-ranking slab} 
is necessary to determine such a geometric 
refinement of the rank stratification of $\EE_A(M)$.

%%%%%%%%%%%%%%%%%%%%%%%%%%%%%%%%%%%%%%%%
\section{Ranking toric modules}%%%%%%%%%%%%%%%%%%%%%%%%
%%%%%%%%%%%%%%%%%%%%%%%%%%%%%%%%%%%%%%%%
\label{sec:r-t mods}

As in Section~\ref{sec:exceptional arrangement}, let $\dM \subseteq \Z^d$ 
be a nonempty $\N A$--monoid (see Definition~\ref{def:monoid}), 
so that $M = \C\{\dM\} \subseteq \C\{\Z^d\} \cong S_A[\del_A^{-1}]$ 
is a nontrivial weakly toric module (see Example~\ref{ex-M wt}). 
For a fixed $\beta\in\EE_A(M)$, we know from \eqref{2} that 
$Q$ can be used to compute the rank jump $j(\beta)$. 
However, this module contains a large amount of excess 
information that does not play a role in $\HH_\bullet(Q;\beta)$. 
To isolate the graded pieces of $Q$ that impact $j(\beta)$, 
we will define weakly toric modules 
$S^{\beta}\subseteq T^{\beta}$ so that

\vspace{-3.5ex}
\begin{eqnarray}\label{remark-goal modules}
\begin{minipage}[t]{12cm} 
\begin{enumerate}
\item \label{prop1} $M \subseteq S^{\beta} \subseteq
        T^{\beta} \subseteq S_A[\del_A^{-1}],$
\item \label{prop2} $\cR_A(M,\beta) =
        \qdeg\left(\dfrac{T^{\beta}}{S^{\beta}}\right),$
\item \label{prop3} $\beta \notin
        \qdeg\left(\dfrac{S_A[\del_A^{-1}]}{T^{\beta}}\right)$,
\item \label{prop4} $\beta \notin
        \qdeg\left(\dfrac{S^{\beta}}{M}\right)$, and
\item \label{prop5} $\PP^{\beta} =
        \deg\left(\dfrac{T^{\beta}}{S^{\beta}}\right)$
        is a union of translates of $\wt{\N F}$ for various $F \preceq A$.
\end{enumerate}
\end{minipage}
\end{eqnarray}

In Proposition~\ref{prop-qis}, we show that 
Properties~\eqref{prop1}-\eqref{prop4} of \eqref{remark-goal modules} 
allow us to replace $Q$ with 
$P^{\beta}=T^{\beta}/S^{\beta}$ when
calculating $j(\beta)$. 
To use this module to actually compute $j(\beta)$, 
we will encounter other toric modules with structure similar 
to $P^{\beta}$, which are called \emph{ranking toric modules}. 
Property~\eqref{prop5} allows $P^{\beta}$ 
(and similarly, any ranking toric module) 
to be decomposed into \emph{simple ranking toric modules}. 
These modules are constructed so that their 
Euler--Koszul homology modules have easily computable ranks. 
At the end of Section~\ref{subsec:simple rtm}, we outline more specifically how 
simple ranking toric modules will play a role in our computation of $j(\beta)$. 

%%%%%%%%%%%%%%%%%%%%%%%%%%%%%%%%%%%%%%%%%
\subsection{Combinatorial objects controlling rank}%%%%%%%%%%%%%%%%
%%%%%%%%%%%%%%%%%%%%%%%%%%%%%%%%%%%%%%%%%
\label{subsec:r-t construction}

We now construct the class of {\it ranking toric modules}, which includes 
the module $P^{\beta}$ coming from \eqref{remark-goal modules}. 
These modules will be constructed via their degree sets, 
which are unions of $\wt{\N F}$--modules for various $F\preceq A$. 
We begin by isolating the translated lattices contained in $\deg(Q)$ that lie in 
the $\beta$-components $\cR_A(M,\beta) \subseteq \qdeg(Q) = \cR_A(M)$ of 
the ranking arrangement of $M$ (see Notation~\ref{not-beta cmpt}). 
The union of these translated lattices will be denoted by $\bbE^{\beta}$. 

\begin{definition}\label{def-EP} \ \
\begin{enumerate}
\item Let
    \[
    \FF(\beta)
        = \{ F \preceq A \mid \beta + \C F \subseteq \cR_A(M) \}
    \]
    be the set of faces of $A$ corresponding to the $\beta$-components 
    $\cR_A(M,\beta)$ of the ranking arrangement of $M$. 
    This set $\FF(\beta)$ is a polyhedral cell complex, 
    and $\cR_A(M,\beta)$ is the union 
    $\cR_A(M,\beta) = \bigcup_{F\in \FF(\beta)}(\beta+\C F).$
\item For each $F \in \FF(\beta)$, let
    \begin{eqnarray*}
    \bbE_F^{\beta}
        = \Z^d \cap (\beta+\C F)\setminus(\dM+\Z F).
    \end{eqnarray*}
   	Lemma~\ref{lemma-MM05} and Theorem~\ref{thm-str comparison} 
	together imply that $\bbE_F^{\beta}$ is nonempty 
	exactly when there is a containment
	$\beta+\C F\subseteq \cR_A(M,\beta)$. 
\item Since $\dM$ is an $\N A$--monoid, 
    $\dM+\Z F$ is closed under addition, 
    so $\bbE_F^{\beta}$ is $\Z F$-stable.
    Thus there is a finite set of $\Z F$-orbit representatives 
    $B_F^{\beta}$ such that 
    \begin{equation}\label{eqn-partition bbE}
        \bbE_F^{\beta}
            = \bigsqcup_{b\in B_F^{\beta}}(b+\Z F)
    \end{equation}
    is partitioned into $\Z F$-orbits as a disjoint union over $B_F^{\beta}$.
    Notice that $|B_F^{\beta}|\leq[(\Z^d\cap\R F):\Z F]$. 
\item The $\Z F$-orbits $b+\Z F$ in 
	\eqref{eqn-partition bbE} are the translated lattices 
	that we will use to construct ranking toric modules. 
	Each is determined by the pair $(F,b)$. 
	We denote the collection of such pairs by 
	\begin{eqnarray*}
		\cJ(\beta) = 
		\left\{ (F,b) \in \FF(\beta) \times B_F^{\beta} 
		\ \Big\vert\ (b+\Z F)\subseteq\bbE_F^{\beta} \right\}. 
	\end{eqnarray*}
\item For a subset $J \subseteq \cJ(\beta)$, let 
	\begin{align*}
	\bbE_J^{\beta} \ &= \ \bigcup_{(F,b)\in J} (b+\Z F). 
	\intertext{The maximal case determines the \emph{ranking lattices} of $M$ at $\beta$:}
	\bbE^{\beta} \ 
	:= \ \bbE_{\cJ(\beta)}^{\beta} \ 
	&= \ \bigcup_{(F,b)\in\cJ(\beta)} (b+\Z F). 
	\end{align*}
\end{enumerate}
\end{definition}

\begin{notation}\label{not-omitJ}
Many of the objects we define in this section are dependent upon 
a subset $J\subseteq \cJ(\beta)$, 
and this dependence is indicated by the subscript $J$. 
Whenever we omit this subscript, it is understood that $J = \cJ(\beta)$. 
\end{notation}

By the upcoming Proposition~\ref{prop-zc E}, two parameters 
$\beta,\beta'\in\C^d$ 
belong to the same ranking slab exactly when 
$\bbE^{\beta} = \bbE^{\beta'}$. 
This is what will be used to show that the rank jump $j(\beta)$) 
is constant on ranking slabs. 

\begin{lemma}\label{lemma-zc E addition}
The Zariski closure of the ranking lattices $\bbE^{\beta}$ of $M$ at $\beta$ 
coincides with the $\beta$-components $\cR_A(M,\beta)$ 
of the ranking arrangement. 
\end{lemma}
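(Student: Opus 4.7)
The plan is to prove the two inclusions separately; both rest on the fact, recorded immediately after Definition~\ref{def-EP}, that $\bbE_F^{\beta}$ is nonempty for exactly those $F \preceq A$ with $\beta + \C F \subseteq \cR_A(M,\beta)$, i.e., for exactly the $F \in \FF(\beta)$.

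For the containment $\overline{\bbE^{\beta}} \subseteq \cR_A(M,\beta)$, I would first observe that $\cR_A(M,\beta) = \bigcup_{F \in \FF(\beta)}(\beta + \C F)$ is a finite union of complex affine subspaces of $\C^d$, hence is Zariski closed. Then every generator $b + \Z F$ of $\bbE^{\beta}$ with $(F,b) \in \cJ(\beta)$ sits inside $\bbE_F^{\beta} \subseteq \beta + \C F \subseteq \cR_A(M,\beta)$, so $\bbE^{\beta} \subseteq \cR_A(M,\beta)$ and the inclusion of Zariski closures follows.

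For the reverse containment $\cR_A(M,\beta) \subseteq \overline{\bbE^{\beta}}$, it suffices to show that $\beta + \C F \subseteq \overline{\bbE^{\beta}}$ for every $F \in \FF(\beta)$. Fix such an $F$. By the nonemptiness statement cited above, $\bbE_F^{\beta}$ contains at least one lattice translate $b + \Z F$ with $b \in B_F^{\beta}$, so $(F,b) \in \cJ(\beta)$ and $b + \Z F \subseteq \bbE^{\beta}$. Since $b \in \beta + \C F$ and $\Z F$ spans $\C F$ as a $\C$-vector space, the Zariski closure of the lattice $b + \Z F$ in $\C^d$ is the affine subspace $b + \C F = \beta + \C F$. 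Hence $\beta + \C F \subseteq \overline{\bbE^{\beta}}$, and taking the union over $F \in \FF(\beta)$ completes the proof.

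The only potential obstacle is guaranteeing that each $F \in \FF(\beta)$ actually contributes a nonempty lattice orbit to $\bbE^{\beta}$, but this is precisely the combined content of Lemma~\ref{lemma-MM05} and Theorem~\ref{thm-str comparison}, already invoked in the discussion following Definition~\ref{def-EP}. Everything else is a direct application of the definitions together with the elementary fact that the Zariski closure of a full-rank lattice inside a complex-linear translate is that entire translate.
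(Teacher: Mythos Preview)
Your proof is correct and follows essentially the same route as the paper's. Both arguments handle the forward inclusion by noting that $\cR_A(M,\beta)$ is closed and contains $\bbE^\beta$, and both handle the reverse inclusion by producing, for each $F\in\FF(\beta)$, a translate $b+\Z F\subseteq\bbE^\beta$ whose Zariski closure is $\beta+\C F$. The only difference is cosmetic: you invoke the nonemptiness of $\bbE_F^\beta$ as already recorded after Definition~\ref{def-EP}, whereas the paper rederives it inline by exhibiting a vector $b$ with $b+\wt{\N F}\subseteq\Z^d\setminus\dM$ and then appealing to the definition of quasidegrees.
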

\begin{proof}
It is clear from the definitions that 
$\bbE^{\beta}\subseteq \cR_A(M,\beta)$. 
For the reverse containment, 
if $\beta + \C F \subseteq \cR_A(M,\beta)$, then there 
exists a vector 
$b\in \beta + \C F$ such that 
$b + \wt{\N F} \subseteq \Z A \setminus \dM$. 
This implies that $(b + \N F) \cap (\dM + \Z F)$ is empty, 
so the claim now follows from the definition of quasidegree sets 
in Definitions~\ref{def-qdeg}~and~\ref{def-wtoric}. 
\end{proof}

\begin{proposition}\label{prop-zc E}
The parameters $\beta,\beta'\in\C^d$ belong to the same ranking slab 
if and only if the ranking lattices of $M$ at $\beta$ and $\beta'$ coincide, 
that is, if $\bbE^{\beta} = \bbE^{\beta'}.$
\end{proposition}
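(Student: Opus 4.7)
The plan is to prove both implications by showing that $\bbE^\beta$ encodes exactly the component-membership data of $\beta$ in $\cR_A(M)$ and in each of the arrangements $-\qdeg(\ext_{R}^{n-i}(M,R)(-\varepsilon_A))$ appearing in Definition~\ref{def-ranking slab}. Since a ranking slab is a maximal set of parameters sharing these component memberships, the equivalence will then be immediate. The key background input is a dictionary, essentially provided by \cite{MMW} via $\Z^d$-graded local duality: for every $(F,b) \in \cJ(\beta)$, the affine subspace $b + \C F = \overline{b + \Z F}$ is an irreducible component of some $-\qdeg(\ext_{R}^{n-i}(M,R)(-\varepsilon_A))$ passing through $\beta$, and conversely every such component through $\beta$ arises in this way.

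For $(\Leftarrow)$: assuming $\bbE^\beta = \bbE^{\beta'}$, taking Zariski closures and invoking Lemma~\ref{lemma-zc E addition} yields $\cR_A(M,\beta) = \cR_A(M,\beta')$; since every $\beta$-component of $\cR_A(M)$ contains $\beta$ by definition (Notation~\ref{not-beta cmpt}), the parameters $\beta$ and $\beta'$ lie on the same irreducible components of $\cR_A(M)$. Applying the dictionary above, the set of components of each $-\qdeg(\ext_{R}^{n-i}(M,R)(-\varepsilon_A))$ through $\beta$ is read off from the $\Z F$-orbits comprising $\bbE^\beta$, so $\bbE^\beta = \bbE^{\beta'}$ forces the same Ext components through both $\beta$ and $\beta'$, placing them in a common ranking slab.

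For $(\Rightarrow)$: if $\beta$ and $\beta'$ share a ranking slab, then the slab's respect for $\cR_A(M)$ gives $\cR_A(M,\beta) = \cR_A(M,\beta')$ and $\FF(\beta) = \FF(\beta')$, which already yields $\overline{\bbE^\beta} = \overline{\bbE^{\beta'}}$ via Lemma~\ref{lemma-zc E addition}. To upgrade this to an equality of the translated lattices themselves, I use that the slab also respects each arrangement $-\qdeg(\ext_{R}^{n-i}(M,R)(-\varepsilon_A))$: for every $(F,b) \in \cJ(\beta)$, the component $b + \C F$ contains $\beta$, hence also $\beta'$, so $\beta + \C F = b + \C F = \beta' + \C F$. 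Since the obstruction set $\dM + \Z F$ is intrinsic to $M$, this forces $\bbE_F^\beta = \bbE_F^{\beta'}$ for every $F \in \FF(\beta)$, and taking unions yields $\bbE^\beta = \bbE^{\beta'}$.

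The principal obstacle is the dictionary between translated lattices $b + \Z F$ in $\bbE^\beta$ and irreducible components of $-\qdeg(\ext_{R}^{n-i}(M,R)(-\varepsilon_A))$. This rests on the local-cohomological description of $\ext_{R}^{n-i}(M,R)$ from \cite{MMW}, together with careful bookkeeping of the twist by $-\varepsilon_A$ and of how the quotient $Q = S_A[\del_A^{-1}]/M$ from \eqref{1} interacts with the face structure of $A$. Once this correspondence is in place, the proposition follows by straightforward unraveling of the relevant definitions.
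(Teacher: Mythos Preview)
Your approach is essentially the same as the paper's: both derive the equivalence from Lemma~\ref{lemma-zc E addition} together with the local-duality description of the $\ext$ quasidegrees coming from \cite{MMW}. The paper packages the latter as Lemma~\ref{lemma-MM05} and Theorem~\ref{thm-str comparison}, whereas you appeal to it directly as your ``dictionary,'' and you correctly flag that dictionary as the crux of the argument.

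One point deserves care. Your dictionary asserts that for \emph{every} $(F,b)\in\cJ(\beta)$ the translate $b+\C F$ is an irreducible \emph{component} of some $-\qdeg(\ext_{R}^{n-i}(M,R)(-\varepsilon_A))$. What is immediate from local duality is only that $b+\C F$ is \emph{contained} in such an arrangement; mere containment would not, by itself, let the $(\Rightarrow)$ direction go through, since the stratification only controls membership in components. The sharper statement you need---that for an appropriate $i$ depending on $\dim F$ the translate is in fact a component---is exactly what the combination of Lemma~\ref{lemma-MM05} (choosing $F$ maximal among faces not in $\nabla(M,b)$) and Theorem~\ref{thm-str comparison} supplies. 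So your outline is correct, but when you fill in the dictionary you will find yourself reproving those two results; citing them directly, as the paper does, is the cleaner route.
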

\begin{proof}
This is a consequence of Lemma~\ref{lemma-zc E addition}, 
Lemma~\ref{lemma-MM05}, and Theorem~\ref{thm-str comparison}. 
\end{proof}

\begin{notation}
In light of Proposition~\ref{prop-zc E}, use equality of ranking lattices to extend 
the ranking slab stratification of $\EE_A(M)$ to the parameter space $\C^d$. 
\end{notation}

One might try making the sets $\bbE_J^{\beta}$ 
in Definition~\ref{def-EP}
the degree sets of ranking toric modules. 
However, while the natural map  
$\bbE_F^{\beta} \rightarrow \bbE_G^{\beta}$ 
given by faces $G\preceq F$ 
induces a vector space map 
$\C\{\bbE_F^{\beta}\}\rightarrow\C\{\bbE_G^{\beta}\}$, 
this induced map is not a morphism of $S_F$-modules because 
it sends units to zero. 
To overcome this, we introduce the lattice points in a certain polyhedron, 
denoted by $\cC_A(\beta)$, and intersect it 
with $\bbE_J^\beta$ to produce the degree set of a ranking toric module.

\begin{definition}\label{def-CP} \ \ 
\begin{enumerate}
\item Recall the primitive integral support functions $p_F$ from the beginning of Section~\ref{sec:defns}. In order to construct a ranking toric module from 
	$\bbE_J^{\beta}$, (and achieve the various quasidegree sets 
	proposed in \eqref{remark-goal modules}), set
    \[ \phantom{xxx}
    \cC_A(\beta) =
        \left\{
        v\in\Z^d\ \bigg\vert\ \text{ for each facet $F$ of $A$,
        }
            \begin{array}{ll}
            p_F(v)\geq p_F(\beta) &
                \text{if $p_F(\beta)\in\R$,} \\
            p_F(v)\geq 0 & \text{else.}
            \end{array}
        \right\}.
    \]
	For $\beta\in\R^d$, notice that 
	$\cC_A(\beta)=\Z^d\cap(\beta+\RR_{\geq 0}A)$ 
	is simply the integral points in the cone $\RR_{\geq0}A$ 
	after translation by $\beta$. 
\item For a pair $(F,b)\in\cJ(\beta)$, let  
    \begin{align}
    \PP_{F,b}^{\beta} & \phantom{:}= \cC_A(\beta) \cap [b+\Z F]. \nonumber
    \intertext{The degree sets of ranking toric modules are of the form}
	\label{17}
    \PP_J^{\beta} & \phantom{:}=  \bigcup_{(F,b)\in J} \PP_{F,b}^{\beta} 
    \ = \ \cC_A(\beta) \cap \bbE_J^{\beta}
	\intertext{for $J\subseteq\cJ(\beta)$. The largest of these is}
	\label{eqn-P}
	\PP^{\beta} \ &:= \ \PP_{\cJ(\beta)}^{\beta} \ 
	= \ \cC_A(\beta) \cap \bbE^{\beta}, 
	\end{align}
	the degree set appearing in \eqref{remark-goal modules}. 
\end{enumerate}
\end{definition}

\begin{example}
(Continuation of Example~\ref{example-hidden})
\label{example-hidden2}
With $\beta = 
\left[\begin{smallmatrix} 1 \\ 0 \\ 0 \end{smallmatrix}\right] \in\EE_A(S_A)$ 
and $b =  \left[\begin{smallmatrix} 1 \\ 1 \\ 0 \end{smallmatrix}\right]$, 
the sets of Definitions~\ref{def-EP}~and~\ref{def-TS and r-t} are 
\begin{eqnarray*}
\FF(\beta) &=& \{ \nothing, G, F \}, \\
\cJ(\beta) &=& \{ (\nothing,\beta), (G,b), (F,\beta) \}, \\
\bbE^{\beta} &=& [\beta + \Z F] \sqcup [b + \Z G], \\
 \text{and } \phantom{xx}
 \PP^{\beta} &=& [\beta + \N F] \sqcup [b + \N G]. 
\end{eqnarray*} 
\end{example}

Having defined the degree sets of ranking toric modules in \eqref{17},  
we now construct the modules themselves.   
Along the way, we meet the modules 
that satisfy the requirements of \eqref{remark-goal modules}.

\begin{definition}\label{def-TS and r-t} \ \ 
\begin{enumerate}
\item Each ranking toric module will be a quotient of the module $T^{\beta}$ 
	(for some $\beta\in\C^d$), where 
	\[
	\TT^{\beta} = 
		\dM\cup\left[\bigcup_{b\in\PP^{\beta}} (b+\wt{\N A})\right]
	\phantom{xx} \text{ and } \phantom{xx}
	T^{\beta}\ =\ \C\{\TT^{\beta}\}.
	\]	
	Notice that if $\beta \in \Z^d$, 
	then $\TT^{\beta} = \dM \cup (\beta+\wt{\N A})$. 
	The simplest case occurs when $\dM = \N A$ and $\beta \in \wt{\N A}$,   
	so that $\TT^{\beta} = \wt{\N A}$. 
\item For $J\subseteq\cJ(\beta)$, let 
	\[
	\SS_J^{\beta}  = \TT^{\beta}\setminus \PP_J^{\beta}
	\phantom{xx} \text{ and } \phantom{xx}
	S_J^{\beta}\ =\ \CC\{\SS_J^{\beta}\}.
	\]
    We show in Proposition~\ref{prop-basics R-toric} 
    that $T^{\beta}$ and 
    $S_J^{\beta}$ are indeed weakly toric modules (see Definition~\ref{def-wtoric}).
    When $J = \cJ(\beta)$, these modules satisfy the properties 
    \eqref{remark-goal modules}. By Notation~\ref{not-omitJ},  
    $S^{\beta} = S_{\cJ(\beta)}^{\beta}$. 
\item For a subset $J\subseteq\cJ(\beta)$, the quotient
    \begin{eqnarray*}
    P_J^{\beta} & = & \dfrac{T^{\beta}}{S_J^{\beta}}
    \end{eqnarray*}
    has degree set $\deg(P_J^{\beta})=\PP_J^{\beta}$, 
    as recorded in Proposition~\ref{prop-basics R-toric}. 
    In Proposition~\ref{prop-qis}, we show that $Q$ in \eqref{2} 
    can be replaced by 
    $P^{\beta} = P_{\cJ(\beta)}^{\beta}$
    when computing $j(\beta)$.
\item If a toric module $N$ is isomorphic to $P_J^{\beta}$ for a pair 
    $(M,\beta)$ and a subset $J\subseteq\cJ(\beta)$, we say that 
    $N$ is a {\it ranking toric module} determined by $J$.
\end{enumerate}
\end{definition}

\begin{proposition} \label{prop-basics R-toric}
Let $J \subseteq \cJ(\beta)$.
    There are containments of weakly toric modules:
    \begin{eqnarray}\label{26}
    M\subseteq S_J^{\beta} \subseteq 
    	T^{\beta}\subseteq S_A[\del_A^{-1}].
    \end{eqnarray}
    In particular, 
    $P_J^{\beta} = T^{\beta}/S_J^{\beta}$ is a 
    ranking toric module with degree set 
    $\PP_J^{\beta}$.
\end{proposition}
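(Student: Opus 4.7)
The plan is to verify in turn: that $\TT^\beta$ and $\SS_J^\beta$ are $\N A$-modules (so that $T^\beta$ and $S_J^\beta$ become $\Z^d$-graded $S_A$-submodules of $S_A[\del_A^{-1}]$); that both are weakly toric via filtered direct limits of toric submodules; that the chain of inclusions \eqref{26} holds; and that $\deg(P_J^\beta) = \PP_J^\beta$. Recall that $M$ is weakly toric by Example~\ref{ex-M wt}, and that $S_A[\del_A^{-1}] = \C\{\Z^d\}$ is weakly toric as a filtered direct limit of shifts $S_A(-b)$.

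The module $T^\beta$ is the easier piece. First I would check that $\TT^\beta$ is $\N A$-closed: $\dM + \N A \subseteq \dM$ because $\dM$ is an $\N A$-monoid, and each $b + \wt{\N A}$ is stable under $\N A$-addition because $\N A \subseteq \wt{\N A}$. This makes $T^\beta = \C\{\TT^\beta\}$ a $\Z^d$-graded $S_A$-submodule of $S_A[\del_A^{-1}]$. Weakly toric structure follows by expressing $T^\beta$ as the filtered direct limit, indexed by finite subsets $\Phi \subseteq \PP^\beta$ and finitely generated $\N A$-submonoids $\dM_0 \subseteq \dM$, of the finitely generated toric modules $\C\{\dM_0\} + \sum_{b \in \Phi}\C\{b + \wt{\N A}\}$; each such summand is a finite sum of $\Z^d$-graded shifts of $S_A$ and $\wt{S}_A$.

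The core step is to verify that $\SS_J^\beta = \TT^\beta \setminus \PP_J^\beta$ is an $\N A$-module. Contrapositively, for $s \in \TT^\beta$ and a column $a_i$ of $A$, I must show that $s + a_i \in \PP_J^\beta$ forces $s \in \PP_J^\beta$. Suppose $s + a_i \in \PP_{F_0,b_0}^\beta = \cC_A(\beta) \cap (b_0 + \Z F_0)$ for some $(F_0,b_0) \in J$. Since $b_0 + \Z F_0 \subseteq \bbE_{F_0}^\beta \subseteq \Z^d \setminus (\dM + \Z F_0)$, the case $s \in \dM$ is impossible. Thus $s = b' + v$ with $b' \in \PP^\beta$ and $v \in \wt{\N A}$, and then $b' \in \cC_A(\beta)$ together with the non-negativity of each primitive integral support function $p_F$ on $\wt{\N A}$ gives $s \in \cC_A(\beta)$. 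When $a_i \in F_0$, the identity $s = (s + a_i) - a_i \in b_0 + \Z F_0$ puts $s \in \PP_{F_0,b_0}^\beta \subseteq \PP_J^\beta$. The remaining case relies on the observation that, for any face $F_0 \preceq A$, $a_i \in \C F_0$ forces $a_i \in F_0$, since $F_0$ is cut out of the columns of $A$ by a supporting linear functional of the cone $\R_{\geq 0}A$; hence $a_i \notin F_0$ yields $a_i \notin \Z F_0$. I expect this last case, $a_i \notin F_0$, to be the main technical obstacle: one must locate a possibly different pair $(F_1, b_1) \in J$ for which $s \in \PP_{F_1,b_1}^\beta$, by analyzing how $\N A$-translation by $a_i$ acts on the $\Z F_1$-coset of $b'$ inside $\bbE^\beta$ and exploiting the face structure of $\FF(\beta)$.

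Once $\SS_J^\beta$ is confirmed to be an $\N A$-module, the remaining assertions follow quickly. The equality $\dM \cap \bbE^\beta = \nothing$, immediate from the defining formula for each $\bbE_F^\beta$, yields $\dM \subseteq \SS_J^\beta$ and hence $M \subseteq S_J^\beta$; the inclusions $S_J^\beta \subseteq T^\beta \subseteq S_A[\del_A^{-1}]$ reflect the degree-set inclusions $\SS_J^\beta \subseteq \TT^\beta \subseteq \Z^d$. Weakly toric structure of $S_J^\beta$ comes from the same filtered-direct-limit argument used for $T^\beta$. Finally, the partition $\TT^\beta = \SS_J^\beta \sqcup \PP_J^\beta$ from Definition~\ref{def-TS and r-t}, combined with one-dimensionality of each graded piece of $T^\beta$, gives $\deg(P_J^\beta) = \deg(T^\beta) \setminus \deg(S_J^\beta) = \PP_J^\beta$, completing the identification of $P_J^\beta$ as the ranking toric module determined by $J$.
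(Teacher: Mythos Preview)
Your verification is more careful than the paper's brief proof (which simply asserts $\N A$-stability of all degree sets and notes that $\PP_J^\beta\subseteq\cC_A(\beta)$ forces $P_J^\beta$ to be finitely generated, hence toric), but you leave the case $a_i\notin F_0$ unresolved and misdiagnose it: your proposed strategy of locating another pair $(F_1,b_1)\in J$ cannot succeed in general, since $J$ may be the singleton $\{(F_0,b_0)\}$. In fact that case is vacuous, and the key is precisely the membership $s\in\cC_A(\beta)$ you already established. Because $a_i\notin F_0$ and every face of a pointed cone is an intersection of facets, there is a facet $G\succeq F_0$ with $a_i\notin G$, so $p_G(a_i)\geq 1$. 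From $s+a_i\in b_0+\Z F_0\subseteq\beta+\C F_0$ and $p_G|_{\C F_0}=0$ we get $p_G(s+a_i)=p_G(\beta)$, and $p_G(\beta)=p_G(b_0)\in\Z$ since $b_0\in\Z^d$, so this facet is of the first type in the definition of $\cC_A(\beta)$. Then $p_G(s)=p_G(\beta)-p_G(a_i)<p_G(\beta)$, contradicting $s\in\cC_A(\beta)$. Thus only the case $a_i\in F_0$ occurs, and you have already handled it.

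One minor omission: you check that $P_J^\beta$ is weakly toric but not that it is toric in the sense of Definition~\ref{def:toric}, which is what ``ranking toric module'' requires. As the paper notes, this follows because $\PP_J^\beta\subseteq\cC_A(\beta)$ makes $P_J^\beta$ a finitely generated $S_A$-module.
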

\begin{proof}
By construction, we have the containment 
$\PP_J^{\beta}\subseteq \bbE^{\beta}$.
Since the intersection of $\bbE^{\beta}$ and $\dM$ is empty,
$\PP_J^{\beta} \cap \dM$ is empty as well. 
Hence $M \subseteq S_J^{\beta}$. 
The other containments in \eqref{26} 
are obvious.
It is clear from the definitions that the degree sets of all 
modules in question are closed under addition with 
elements of $\N A$, so they are all weakly toric modules. 
For the second statement, 
since $\PP_J^{\beta} \subseteq \cC_A(\beta)$, 
$P_J^{\beta}$ is a finitely generated $S_A$-module 
and therefore $P_J^{\beta}$ is toric. 
\end{proof}

By Lemma~\ref{lemma-zc E addition} and the definition 
\eqref{eqn-P} of $\PP^{\beta}$, the arrangement 
$\qdeg(P^{\beta})$ coincides with the $\beta$-components 
$\cR_A(M,\beta)$ of $M$ at $\beta$. 
Further, the construction of  
$P^{\beta}$ in Definition~\ref{def-TS and r-t}
is such that $P^{\beta}$ can replace $Q$ 
in \eqref{2} when calculating $j(\beta)$.

\begin{proposition}\label{prop-qis}
The Euler--Koszul complexes $\KK_\bullet(Q,\beta)$ and
$\KK_\bullet(P^{\beta},\beta)$ are quasi-isomorphic.
In particular,
\begin{eqnarray}\label{5}
    j(\beta) =
    \rank \HH_1(P^{\beta},\beta) -
    \rank \HH_0(P^{\beta},\beta). 
\end{eqnarray}
\end{proposition}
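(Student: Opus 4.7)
The plan is to sandwich the module $T^\beta/M$ between $Q$ and $P^\beta$ via two short exact sequences of weakly toric modules, and then kill the unwanted pieces using the vanishing criterion of Proposition~\ref{prop-HH qis 0}. Starting from the chain of inclusions in \eqref{26}, and identifying $T^\beta/M$ with its image in $Q = S_A[\del_A^{-1}]/M$, the sequences
\begin{gather*}
0 \to S^\beta/M \to T^\beta/M \to P^\beta \to 0, \\
0 \to T^\beta/M \to Q \to S_A[\del_A^{-1}]/T^\beta \to 0
\end{gather*}
are exact. Applying the Euler--Koszul functor, which is exact on weakly toric modules, produces the corresponding short exact sequences of complexes.

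Next, I would invoke property~(4) of~\eqref{remark-goal modules}, namely $\beta \notin \qdeg(S^\beta/M)$, together with Proposition~\ref{prop-HH qis 0}, to conclude that $\KK_\bullet(S^\beta/M,\beta)$ is acyclic. Property~(3) yields the analogous acyclicity of $\KK_\bullet(S_A[\del_A^{-1}]/T^\beta,\beta)$. Consequently, in the induced short exact sequences of Euler--Koszul complexes, the natural inclusion $\KK_\bullet(T^\beta/M,\beta) \hookrightarrow \KK_\bullet(Q,\beta)$ and the natural surjection $\KK_\bullet(T^\beta/M,\beta) \twoheadrightarrow \KK_\bullet(P^\beta,\beta)$ have acyclic cokernel and kernel respectively, so both are quasi-isomorphisms. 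This produces the zigzag
\[
\KK_\bullet(Q,\beta) \xleftarrow{\simeq} \KK_\bullet(T^\beta/M,\beta) \xrightarrow{\simeq} \KK_\bullet(P^\beta,\beta),
\]
which witnesses the asserted quasi-isomorphism. Formula~\eqref{5} is then immediate on combining \eqref{2} with the equalities $\rank \HH_i(Q,\beta) = \rank \HH_i(P^\beta,\beta)$ for $i=0,1$ extracted from this zigzag.

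The main obstacle will be verifying properties~(3) and~(4) of~\eqref{remark-goal modules} for the combinatorial construction of $S^\beta$ and $T^\beta$ in Definitions~\ref{def-EP} and~\ref{def-TS and r-t}, as these are what power the two acyclicity inputs above. For~(4), the degree set of $S^\beta/M$ is $\SS^\beta \setminus \dM$, and I would show that every quasidegree component it contributes is spanned by a face $F$ whose translated subspace does not pass through $\beta$; this requires tracking how removing $\PP^\beta$ from the shifted saturations $b + \wt{\N A}$ interacts with $\dM$, and appealing to Theorem~\ref{thm-str comparison} to rule out any surviving components of $\cR_A(M)$ through $\beta$. For~(3), I would argue that the integer points missing from $\TT^\beta$ do not accumulate Zariski-near $\beta$: the polyhedron $\cC_A(\beta)$ was defined via the primitive support functions $p_F$ of the facets precisely so that the complement of $\TT^\beta$ is pushed away from $\beta$ in every facet direction, and Lemma~\ref{lemma-MM05} prevents the emergence of hidden higher-codimension components. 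Both verifications are essentially bookkeeping, but they must carefully account for every face in $\FF(\beta)$ and its relation to $\dM$.
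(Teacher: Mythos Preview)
Your argument is essentially identical to the paper's: the same pair of short exact sequences linking $Q$, $T^\beta/M$, and $P^\beta$, followed by Proposition~\ref{prop-HH qis 0} applied to the outer terms via properties (3) and (4) of \eqref{remark-goal modules}. The paper is terser about verifying (3) and (4)---it simply asserts that ``the definition of $\cC_A(\beta)$ ensures'' them---so your proposed appeal to Theorem~\ref{thm-str comparison} and Lemma~\ref{lemma-MM05} is more machinery than needed; the facet inequalities defining $\cC_A(\beta)$ do the job directly.
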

\begin{proof}
Consider the short exact sequences
\begin{eqnarray*}
0 \rightarrow \dfrac{T^{\beta}}{M} \rightarrow
\dfrac{S_A[\del_A^{-1}]}{M} \rightarrow
\dfrac{S_A[\del_A^{-1}]}{T^{\beta}} \rightarrow 0
\quad \text{and} \quad
0 \rightarrow \dfrac{S^{\beta}}{M} \rightarrow
\dfrac{T^{\beta}}{M} \rightarrow
\dfrac{T^{\beta}}{S^{\beta}} \rightarrow 0.
\end{eqnarray*}
The definition of $\cC_A(\beta)$ ensures that $\beta$ is not a quasidegree of either 
$S^{\beta}/M$ or $S_A[\del_A^{-1}]/T^{\beta}$. 
Thus we obtain the result from long exact sequences in 
Euler--Koszul homology and Proposition~\ref{prop-HH qis 0}.
\end{proof}

\begin{definition}\label{def-E--K char}
The {\it $t^\text{th}$ partial Euler--Koszul characteristic}
of a weakly toric module $N$ is the (nonnegative) integer
\[
\chi_t(N,\beta) = \sum_{q=t}^d\ (-1)^{q-t}\cdot\rank\HH_q(N,\beta).
\]
\end{definition}

The main result of this article, Theorem~\ref{thm-main}, 
states that the partial Euler--Koszul characteristics of ranking toric modules 
are determined by the combinatorics of the ranking lattices $\bbE^{\beta}$ 
of $M$ at $\beta$. 
Lemma~\ref{lemma-chi P} describes $j(\beta)$ as the 
$2^\text{nd}$ partial Euler--Koszul characteristic of $P^{\beta}$, 
so our results regarding the combinatorics of rank jumps 
are a consequence of Theorem~\ref{thm-main}. 

\begin{lemma}\label{lemma-chi P}
The $0^\text{th}$ partial Euler--Koszul characteristic of every 
nontrivial ranking toric module for a pair $(M,\beta)$ is 0. 
In particular, $j(\beta) = \chi_2(P^{\beta},\beta)$.
\end{lemma}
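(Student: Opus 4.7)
The second assertion reduces to the first via Proposition~\ref{prop-qis}. Grouping the alternating sum of ranks yields
\[
\chi_0(P^\beta,\beta) = \rank\HH_0(P^\beta,\beta) - \rank\HH_1(P^\beta,\beta) + \chi_2(P^\beta,\beta) = -j(\beta) + \chi_2(P^\beta,\beta),
\]
so once $\chi_0(P^\beta,\beta)=0$ is established, $j(\beta)=\chi_2(P^\beta,\beta)$ follows. A useful preliminary: every face $F$ indexing $\FF(\beta)$ is a proper face of $A$, since each irreducible component of $\cR_A(M)=\qdeg(Q)$ is a translate of $\C F$ for a proper face $F\preceq A$ (otherwise $\cR_A(M)=\C^d$, contradicting that $Q$ is a proper quotient of $S_A[\del_A^{-1}]$); in particular $\codim(F)\geq 1$.

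I will prove $\chi_0(P_J^\beta,\beta)=0$ by induction on $|J|$, beginning with the atomic case $J=\{(F,b)\}$. Property~(5) of \eqref{remark-goal modules} together with the face identity $\R F\cap\R_{\geq 0}A=\R_{\geq 0}F$ realizes $\C\{\PP_{F,b}^\beta\}$ as a finitely generated $\wt S_F$-submodule of $\C[\Z F]$; normality of $\wt S_F$ forces such a submodule to be maximal Cohen--Macaulay as an $S_F$-module, of quasidegree $b+\C F$. Using the shift formula~\eqref{shift HH} to translate the computation to a maximal Cohen--Macaulay $S_F$-module of quasidegree $\C F$ at the shifted parameter $\beta-b$, which lies in $\C F$ precisely because $(F,b)\in\cJ(\beta)$, Proposition~\ref{prop-str CM EK complex} yields
\[
\rank\HH_q\bigl(P_{\{(F,b)\}}^\beta,\beta\bigr) = \binom{\codim(F)}{q}\cdot r
\]
for some $r\geq 0$ independent of $q$, and the binomial identity gives $\chi_0(P_{\{(F,b)\}}^\beta,\beta) = r(1-1)^{\codim(F)}=0$. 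For $|J|\geq 2$, pick $(F,b)\in J$ with $\dim(F)$ maximal and set $J'=J\setminus\{(F,b)\}$; the inclusions $S_J^\beta\subseteq S_{J'}^\beta\subseteq T^\beta$ from Proposition~\ref{prop-basics R-toric} produce the short exact sequence
\[
0 \to S_{J'}^\beta/S_J^\beta \to P_J^\beta \to P_{J'}^\beta \to 0
\]
of weakly toric modules. Additivity of $\chi_0$ via the long exact sequence in Euler--Koszul homology, the inductive hypothesis $\chi_0(P_{J'}^\beta,\beta)=0$, and a decomposition of the kernel $S_{J'}^\beta/S_J^\beta$ into atomic summands (translates of $\wt{\N G}$ for proper $G\preceq F$) combine to give $\chi_0(P_J^\beta,\beta)=0$.

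The principal obstacle lies in the inductive step: one must verify that peeling off the maximal-dimensional cell $(F,b)$ partitions the kernel's degree set $\PP_J^\beta\setminus\PP_{J'}^\beta$ cleanly into translates of saturated face semigroups $\wt{\N G}$ for proper faces $G\preceq F$ of strictly smaller dimension, carrying the $\Z^d$-graded shifts needed for the atomic calculation to apply uniformly. This combinatorial bookkeeping along the polyhedral complex $\FF(\beta)$ is made possible by the cone-cutting definition of $\cC_A(\beta)$ via the primitive integral support functions $p_F$: the half-space constraints $p_{F'}(v)\geq p_{F'}(\beta)$ guarantee that every boundary overlap again lands in a saturated semigroup, so that the binomial vanishing $(1-1)^{\codim(G)}=0$ propagates through each composition factor and closes the induction.
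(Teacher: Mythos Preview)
Your reduction of the second statement to the first via Proposition~\ref{prop-qis} is correct, as is your base case: for a single pair $(F,b)$ the module $P_{\{(F,b)\}}^\beta$ is maximal Cohen--Macaulay over $S_F$ (this is Theorem~\ref{thm-rank simple R-toric} in the paper), and the binomial identity $(1-1)^{\codim F}=0$ kills $\chi_0$. The paper, by contrast, dispatches the entire lemma in one line by citing \cite[Theorem~4.2]{okuyama}, which gives the Euler characteristic of the Euler--Koszul complex of a toric module directly; for a module supported on proper faces this characteristic vanishes.

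Your inductive step, however, has a genuine gap. You assert that the kernel $S_{J'}^\beta/S_J^\beta$, whose degree set is $\PP_{F,b}^\beta\setminus\PP_{J'}^\beta$, decomposes as a direct sum of translates of $\wt{\N G}$ for proper subfaces $G\preceq F$. This is not true in general: when some $(G,c)\in J'$ satisfies $G\preceq F$ and $c+\Z G\subseteq b+\Z F$, you are excising an interior sublattice from $\PP_{F,b}^\beta$, and what remains need not be a union of translated saturated face semigroups. Your final paragraph acknowledges this as ``the principal obstacle'' but the appeal to the half-space constraints defining $\cC_A(\beta)$ does not address interior excisions and does not close the gap.

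The repair is to drop the insistence on Cohen--Macaulay pieces. Any ranking toric module $P_J^\beta$ is toric, hence admits a filtration with subquotients $S_G(c)$ for faces $G\preceq A$ (Definition~\ref{def:toric}); since $\qdeg(P_J^\beta)$ lies in a union of translates of $\C F$ for proper $F$, every such $G$ is proper. For each subquotient, either $\beta\notin c+\C G$ and all Euler--Koszul homology vanishes by Proposition~\ref{prop-HH qis 0}, or $\beta-c\in\C G$ and the quasi-isomorphism~\eqref{12} of Proposition~\ref{prop-str CM EK complex} (which does \emph{not} require $N$ to be Cohen--Macaulay) factors $\chi_0(S_G(c),\beta)$ as a product containing $\sum_j(-1)^j\binom{\codim G}{j}=0$. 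Additivity of $\chi_0$ along the filtration then finishes the argument with no induction on $|J|$ and no delicate combinatorics.
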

\begin{proof}
This follows from \cite[Theorem~4.2]{okuyama} and \eqref{5}.
\end{proof}

%%%%%%%%%%%%%%%%%%%%%%%%%%%%%%%%%%%%%%%%
\subsection{Simple ranking toric modules}%%%%%%%%%%%%%%%%%%%%
%%%%%%%%%%%%%%%%%%%%%%%%%%%%%%%%%%%%%%%%
\label{subsec:simple rtm}

The polyhedral structure of the degree sets of 
ranking toric modules plays an important role in 
our computation of their partial Euler--Koszul characteristics.
We will use that each face $F \preceq A$ determines an 
$S_F$-module that is the quotient of a ranking toric module. 
Modules of this type are called 
\emph{simple ranking toric modules}.   

\begin{definition}\label{def-rtoric simple} \ \
\begin{enumerate}
\item For a subset $J\subseteq\cJ(\beta)$, 
	the ranking toric module $P_J^{\beta}$ is \emph{simple} 
	if there is a unique $F\in \FF(\beta)$ such that all pairs 
	in $J$ are of the form $(F,b)$. 
\item For each $F\in\FF(\beta)$ and $J\subseteq\cJ(\beta)$, 
	denote by $P_{F,J}^{\beta}$ the simple ranking toric module 
	determined by the set $\{(G,b)\in J \mid F = G\}$. 
	The degree set of this module is denoted by $\PP_{F,J}^{\beta}$. 
\item Call the parameter $\beta$ \emph{simple} for $M$ if $P^{\beta}$ is a 
	simple ranking toric module, or equivalently, 
    if there is an $F\in\FF(\beta)$ such that 
    $P^{\beta} = P_F^{\beta}$ (see Notation~\ref{not-omitJ}).
\end{enumerate}
\end{definition}

We show in Theorem~\ref{thm-rank simple R-toric} that for $F\preceq A$, 
each simple ranking toric module $P_{F,J}^{\beta}$ 
is a maximal Cohen--Macaulay toric $S_F$-module. 
Thus the results of Section~\ref{sec:EKstr} can be applied to compute 
the rank of their Euler--Koszul homology modules. 

Notice that by setting 
\begin{eqnarray*}
B_{F,J}^{\beta} = \{b\in B_F^{\beta}\mid (F,b)\in J\} 
\quad \text{and} \quad 
\bbE_{F,J}^{\beta} = \bigsqcup_{b\in B_{F,J}^{\beta}} (b+\Z F), 
\end{eqnarray*}
it follows from Definition~\ref{def-rtoric simple} that 
$\PP_{F,J}^{\beta} = \cC_A(\beta)\cap\bbE_{F,J}^{\beta}$. 
In particular, when $J = \cJ(\beta)$, 
$\PP_F^{\beta} =  \cC_A(\beta)\cap\bbE_{F}^{\beta}$.

\begin{proposition}\label{prop-simple r-t}
    For $F\in\FF(\beta)$ and $J\subseteq\cJ(\beta)$, 
    the simple ranking toric module $P_{F,J}^{\beta}$ 
    of Definition~\ref{def-rtoric simple} admits 
    an $S_F$-module structure that is compatible with 
    its $S_A$-module structure.
\end{proposition}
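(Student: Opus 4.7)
The plan is to show that the $R$-action on $P_{F,J}^{\beta}$, which already factors through $R\twoheadrightarrow S_A$, descends further along the canonical surjection $S_A\twoheadrightarrow S_F$. Since $S_F\cong R/(I_F+\langle\del_{F^c}\rangle)$, it suffices to verify that each $\del_i$ with $a_i\notin F$ annihilates $P_{F,J}^{\beta}$ and that $I_F$ acts as zero. Any such factorization yields an $S_F$-module structure automatically compatible with the ambient $S_A$-module structure.

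Recall that $P_{F,J}^{\beta}=T^{\beta}/S_{J'}^{\beta}$ for $J'=\{(G,b)\in J\mid G=F\}$, with degree set
\[
\PP_{F,J}^{\beta}\ =\ \cC_A(\beta)\cap\bbE_{F,J}^{\beta}\ \subseteq\ \beta+\C F,
\]
where $\bbE_{F,J}^{\beta}=\bigsqcup_{b\in B_{F,J}^{\beta}}(b+\Z F)$, and the $R$-action sends $\del_i\cdot t^s$ to $t^{s+a_i}$ when $s+a_i\in\PP_{F,J}^{\beta}$, and to $0$ otherwise. For the first vanishing, I would argue that any column $a_i\notin F$ lies outside $\C F$: because $A$ is pointed and $F\preceq A$ equals the intersection of the facets of $A$ containing it, there is a facet $F'\supseteq F$ with $a_i\notin F'$, and the primitive integral support function $p_{F'}$ then vanishes on $\C F$ but satisfies $p_{F'}(a_i)>0$. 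Consequently, for $s\in\PP_{F,J}^{\beta}\subseteq\beta+\C F$ the translate $s+a_i\notin\beta+\C F$, so $s+a_i\notin\bbE_{F,J}^{\beta}$ and $\del_i\cdot t^s=0$. This proves that $\del_{F^c}$ annihilates $P_{F,J}^{\beta}$.

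For the toric relations, the key observation is that $\PP_{F,J}^{\beta}$ is $\N F$-stable: for $a\in\N F\subseteq\Z F$, the translate $s+a$ stays in the $\Z F$-coset of $s$, and since $p_{F'}(a)\geq 0$ for every facet $F'$, it also remains in $\cC_A(\beta)$. A generator $\del^u-\del^v$ of $I_F$ satisfies $\sum_k u_ka_{i_k}=\sum_k v_ka_{i_k}$ with all $a_{i_k}\in F$, so iterating the $\N F$-stability shows that both $\del^u\cdot t^s$ and $\del^v\cdot t^s$ equal $t^{s+\sum_k u_ka_{i_k}}$, whence their difference vanishes. Therefore the $R$-action factors through $S_F$, endowing $P_{F,J}^{\beta}$ with the desired $S_F$-module structure. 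The main obstacle is the convex-geometric separation $a_i\notin F\Rightarrow a_i\notin\C F$ that forces $\del_{F^c}$ to act trivially; once this is in place, the rest is straightforward bookkeeping on the degree set.
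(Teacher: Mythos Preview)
Your proof is correct and rests on the same observation the paper uses: the degree set $\PP_{F,J}^{\beta}$ is closed under addition by elements of $\N F$. The paper's proof is the single sentence ``By construction, $\PP_{F,J}^{\beta}$ is closed under addition with elements of $\N F$,'' implicitly invoking Definition~\ref{def:monoid} to obtain the $S_F$-action; you spell out the same $\N F$-stability and additionally verify that $\del_{F^c}$ annihilates $P_{F,J}^{\beta}$, which makes the compatibility with the $S_A$-structure explicit (note that your separate check of $I_F$ is redundant, since $I_A$ already acts trivially and $S_A/\langle\del_{F^c}\rangle\cong S_F$).
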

\begin{proof}     
By construction, $\PP_{F,J}^{\beta}$ is closed under addition with elements of $\N F$.
\end{proof}

\begin{definition}\label{def-triangle}
We define a partial order $\trianglelefteq$ on 
$J \subseteq \cJ(\beta)$ by 
$(F,b)\trianglelefteq (F',b')$ if and only if 
$b+\Z F \subseteq b'+\Z F'$ for pairs $(F,b),(F',b')\in J$.
We let $\max(J)$ denote the subset of $J$ consisting of maximal elements 
with respect to $\trianglelefteq$. 
\end{definition}
For $J\subseteq\cJ(\beta)$, 
$\max(J)$ is the smallest subset of $J$ that determines 
a direct sum of simple ranking toric modules into which 
$P_J^{\beta}$ embeds. 

The calculation of the partial Euler--Koszul characteristics of a 
ranking toric module $P_J^{\beta}$ will be achieved by 
homologically replacing it by an acyclic complex $I_J^\bullet$ 
composed of simple ranking toric modules. 
We then examine the spectral sequences 
determined by the double complex $\KK_\bullet(I_J^\bullet,\beta)$ 
to obtain a formula for the partial Euler--Koszul characteristics of $P_J^{\beta}$. 

%%%%%%%%%%%%%%%%%%%%%%%%%%%%%%%%%%%%%%%%
\subsection{A reduction useful for computations}%%%%%%%%%%%%%%%%
%%%%%%%%%%%%%%%%%%%%%%%%%%%%%%%%%%%%%%%%

We now define an equivalence relation on the union of the various $\Z F$-orbit 
representatives of \eqref{eqn-partition bbE}.
We show in Proposition~\ref{prop-P decomp} that for 
$J\subseteq\cJ(\beta)$, 
the ranking toric module $P_J^{\beta}$ splits 
as direct sum over the equivalence classes of this relation. 
Thus by additivity of rank, \eqref{5} can be expressed as a sum involving 
simpler ranking toric modules. 

\begin{definition}\label{def-tilde}\ \ 
\begin{enumerate}
\item Let 
	$B^{\beta} = 
		\bigcup_{F\in\FF(\beta)} B_F^{\beta}$
	be the collection of all $\Z F$-orbit representatives 
	from \eqref{eqn-partition bbE}.
\item Let $\bumpeq$ be the equivalence relation on the elements 
	of $B^{\beta}$ that is generated by the relations 
	$b \bumpeq b'$ if there exist $(F,b),(F',b')\in\cJ(\beta)$ 
	such that $(b+\Z F)\cap(b'+\Z F')\neq\nothing.$
\item Let $\widehat{B}^{\beta}$ denote the set of 
	equivalence classes of $\bumpeq$.
\item For $\widehat{b}\in\widehat{B}^{\beta}$ and 
	$J\subseteq\cJ(\beta)$, let 
	\[
	J(\widehat{b}) = \{(F,b')\in J \mid b'\in\widehat{b}\}. 
	\]
	Hence for $J\subseteq\cJ(\beta)$, there is a partition of 
	$\PP_J^{\beta}$ over $\widehat{B}^{\beta}$: 
\ 	$\PP_J^{\beta} = 
       \bigsqcup_{\widehat{b}\in\widehat{B}^{\beta}} \PP_{J(\widehat{b})}^{\beta}$.
\end{enumerate}
\end{definition}

\begin{proposition}\label{prop-P decomp}
For $J\subseteq\cJ(\beta)$, there is a decomposition 
$P_J^{\beta} = 
\displaystyle\bigoplus_{\widehat{b}\in\widehat{B}^{\beta}}P_{J(\widehat{b})}^{\beta}$.
\end{proposition}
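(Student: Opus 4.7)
The plan is to decompose $P_J^{\beta}$ in two steps: first as a $\Z^d$-graded $\C$-vector space and then upgrade to an $S_A$-module decomposition. For the vector space step, I would observe that by the definition of $\bumpeq$ in Definition~\ref{def-tilde}, the partition
\[
\PP_J^{\beta} = \bigsqcup_{\widehat{b} \in \widehat{B}^{\beta}} \PP_{J(\widehat{b})}^{\beta}
\]
is a disjoint union: if a point $v$ lay in both $\PP_{J(\widehat{b})}^{\beta}$ and $\PP_{J(\widehat{b}')}^{\beta}$, the corresponding orbits $b+\Z F$ and $b' + \Z F'$ would intersect at $v$, forcing $\widehat{b} = \widehat{b}'$ by the generating relation for $\bumpeq$. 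Since $P_J^{\beta} = \C\{\PP_J^{\beta}\}$ as $\Z^d$-graded vector spaces, and likewise for each $P_{J(\widehat{b})}^{\beta}$, this gives a $\Z^d$-graded $\C$-linear isomorphism $P_J^{\beta} \cong \bigoplus_{\widehat{b}} P_{J(\widehat{b})}^{\beta}$.

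For the second step, I would promote this vector space decomposition to an $S_A$-module decomposition by showing that each subspace $\C\{\PP_{J(\widehat{b})}^{\beta}\} \subseteq P_J^{\beta}$ is $S_A$-stable. The action $\del_i \cdot t^v = t^{v + a_i}$ vanishes in $P_J^{\beta}$ unless $v + a_i \in \PP_J^{\beta}$, so the task reduces to verifying the following key claim: if $v \in \PP_{J(\widehat{b})}^{\beta}$ and $v + a_i \in \PP_J^{\beta}$, then $v + a_i \in \PP_{J(\widehat{b})}^{\beta}$.

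To prove the claim, fix pairs $(F_0, b_0), (F_1, b_1) \in J$ with $b_0 \in \widehat{b}$, $v \in b_0 + \Z F_0$, and $v + a_i \in b_1 + \Z F_1$; the goal is $b_0 \bumpeq b_1$. When $a_i \in F_0$, the inclusion $v + a_i \in b_0 + \Z F_0$ forces the orbits $b_0 + \Z F_0$ and $b_1 + \Z F_1$ to meet at $v + a_i$, yielding $b_0 \bumpeq b_1$; a symmetric argument dispatches the case $a_i \in F_1$. In the remaining case $a_i \notin F_0 \cup F_1$, the identity $a_i = (v + a_i - \beta) - (v - \beta)$ exhibits $a_i \in \C F_0 + \C F_1 = \C(F_0 \vee F_1)$, so $a_i$ is a column of $A$ in the join face $F_0 \vee F_1$. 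The lattice $v + \Z(F_0 \vee F_1)$ then contains both $v$ and $v + a_i$, and, provided it arises from a pair in $\cJ(\beta)$, it serves as a common orbit connecting $b_0 + \Z F_0$ and $b_1 + \Z F_1$ under $\bumpeq$.

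The principal obstacle is this final case: the join face $F_0 \vee F_1$ is not obviously in $\FF(\beta)$, and the $\Z(F_0 \vee F_1)$-orbit through $v$ need not automatically avoid $\dM + \Z(F_0 \vee F_1)$ just from $v \notin \dM + \Z F_0$. I expect to resolve this using the structure of $\cR_A(M, \beta)$ as an irreducible-component union over $\FF(\beta)$ together with arguments in the spirit of Theorem~\ref{thm-str comparison}, which rule out translates of $\Z F$-orbits slipping into $\dM$-translates without enlarging the ranking arrangement; this is the main substantive content of the proposition.
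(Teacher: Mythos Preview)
Your vector-space decomposition and the reduction to the ``key claim'' are correct, but the ``principal obstacle'' you identify is illusory: the case $a_i \notin F_1$ simply cannot occur. Since $F_1\in\FF(\beta)$ is a proper face, choose a facet $G\succeq F_1$ with $a_i\notin G$ (such $G$ exists because $F_1$ is the intersection of the facets containing it). From $v+a_i\in b_1+\Z F_1\subseteq \beta+\C F_1$ and $p_G|_{\C F_1}=0$ one gets $p_G(v+a_i)=p_G(\beta)$, and $p_G(\beta)\in\Z$ since $b_1\in\Z^d$. Hence $p_G(v)=p_G(\beta)-p_G(a_i)<p_G(\beta)$, so $v\notin\cC_A(\beta)$, contradicting $v\in\PP_{J(\widehat b)}^\beta\subseteq\cC_A(\beta)$. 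Thus the only surviving cases are $a_i\in F_0$ or $a_i\in F_1$, which you already handled; the join-face argument is neither needed nor likely to succeed (there is no reason for $F_0\vee F_1$ to lie in $\FF(\beta)$, and your appeal to Theorem~\ref{thm-str comparison} would not supply one).

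The paper's proof bypasses this case analysis entirely. Proposition~\ref{prop-basics R-toric} already establishes that each $S_{J(\widehat b)}^\beta$ is an $S_A$-submodule of $T^\beta$, so the canonical surjections $P_J^\beta=T^\beta/S_J^\beta\twoheadrightarrow T^\beta/S_{J(\widehat b)}^\beta=P_{J(\widehat b)}^\beta$ are $S_A$-linear. Their direct sum is a $\Z^d$-graded $\C$-linear bijection precisely because the $\PP_{J(\widehat b)}^\beta$ partition $\PP_J^\beta$ disjointly, and an $S_A$-linear bijection is an $S_A$-isomorphism. Your direct approach is equivalent in content---checking $S_A$-stability of each $\C\{\PP_{J(\widehat b)}^\beta\}$ is exactly the $\N A$-closure of $\SS_{J(\widehat b)}^\beta$ asserted in Proposition~\ref{prop-basics R-toric}---but it hides that the proposition is a two-line corollary of work already done.
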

\begin{proof}
For distinct $\widehat{b},\widehat{b'}\in B^{\beta}$, 
the sets $\PP_{J(\widehat{b})}^{\beta}$ 
and $\PP_{J(\widehat{b'})}^{\beta}$ are disjoint by definition of $\bumpeq$. 
Thus there is a decomposition 
$P_J^{\beta} = T^{\beta}/S_J^{\beta} = 
\bigoplus_{\widehat{b}\in B^{\beta}} T^{\beta}/S_{J(\widehat{b})}^{\beta}$. 
\end{proof}

\begin{example}
As a special case of Proposition~\ref{prop-P decomp}, 
the simple ranking toric module $P_F^{\beta}$ can be expressed as the direct sum 
$\bigoplus_{b\in B_F^{\beta}} P_{(F,b)}^{\beta}$ (see \eqref{eqn-partition bbE}). 
\end{example}

\begin{definition}
For $J = \cJ(\beta)$, let the {\it rank jump from $\widehat{b}$ of $M$ at $\beta$} be
    \begin{eqnarray*}
    j_{\widehat{b}}(\beta) = \rank\HH_1(P_{\cJ(\widehat{b})}^{\beta},\beta)
    -\rank\HH_0(P_{\cJ(\widehat{b})}^{\beta},\beta). 
    \end{eqnarray*}
\end{definition}

\begin{corollary}\label{cor-j decomp}
The rank jump $j(\beta)$ can be expressed as the sum
$j(\beta) =  \sum_{\widehat{b}\in\widehat{B}^{\beta}} j_{\widehat{b}}(\beta)$.
\end{corollary}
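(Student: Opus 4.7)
The plan is to derive the corollary as a direct consequence of the formula \eqref{5} for $j(\beta)$, the decomposition in Proposition~\ref{prop-P decomp}, and the additivity of Euler--Koszul homology under direct sums of weakly toric modules.

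First, I would start from Proposition~\ref{prop-qis}, which gives
\[
j(\beta) = \rank \HH_1(P^{\beta},\beta) - \rank \HH_0(P^{\beta},\beta).
\]
Next, taking $J = \cJ(\beta)$ in Proposition~\ref{prop-P decomp}, there is a $\Z^d$-graded decomposition
\[
P^{\beta} \;=\; \bigoplus_{\widehat{b}\in\widehat{B}^{\beta}} P_{\cJ(\widehat{b})}^{\beta}.
\]
I would then observe that the Euler--Koszul functor $\KK_\bullet(-,\beta)$ commutes with finite direct sums, since it is built from the additive functor $D \otimes_R -$ and a Koszul complex on a fixed commuting sequence of endomorphisms; consequently the homology functors $\HH_q(-,\beta)$ also commute with finite direct sums. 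Applying this to the decomposition above yields
\[
\HH_q(P^{\beta},\beta) \;\cong\; \bigoplus_{\widehat{b}\in\widehat{B}^{\beta}} \HH_q\bigl(P_{\cJ(\widehat{b})}^{\beta},\beta\bigr) \qquad \text{for each } q\geq 0.
\]

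Finally, I would take holonomic ranks. Since $\rank$ is additive on direct sums of $D$-modules and the index set $\widehat{B}^{\beta}$ is finite (as $\FF(\beta)$ is finite and each $B_F^{\beta}$ is finite by Definition~\ref{def-EP}), combining the two displayed identities gives
\[
j(\beta) \;=\; \sum_{\widehat{b}\in\widehat{B}^{\beta}} \Bigl[\rank \HH_1\bigl(P_{\cJ(\widehat{b})}^{\beta},\beta\bigr) - \rank \HH_0\bigl(P_{\cJ(\widehat{b})}^{\beta},\beta\bigr)\Bigr] \;=\; \sum_{\widehat{b}\in\widehat{B}^{\beta}} j_{\widehat{b}}(\beta),
\]
as required. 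There is no substantive obstacle here: the only point that merits verification is the compatibility of $\HH_\bullet(-,\beta)$ with direct sums, which is essentially formal from the definition of the Euler--Koszul complex. The real content of the statement lies entirely in Proposition~\ref{prop-P decomp}, which has already been established.
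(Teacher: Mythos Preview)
Your proof is correct and follows essentially the same approach as the paper, which simply cites \eqref{5}, Proposition~\ref{prop-P decomp}, and ``the additivity of $j(\beta)$.'' You have merely unpacked what this additivity means in terms of the compatibility of $\HH_\bullet(-,\beta)$ and $\rank$ with finite direct sums.
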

\begin{proof}
This follows from \eqref{5}, Proposition~\ref{prop-P decomp},  
and the additivity of $j(\beta)$.
\end{proof}

As stated in Corollary~\ref{cor-j decomp}, computing $j(\beta)$ 
is reduced to finding $j_{\widehat{b}}(\beta)$ for each 
$\widehat{b}\in\widehat{B}^{\beta}$. 
When working with examples, it is typically useful to consider 
each $j_{\widehat{b}}(\beta)$ individually. 
In contrast, as we continue with the theory, it is more efficient for 
our notation to study $j(\beta)$ directly. 
In Sections~\ref{subsec:simple case} and~\ref{subsec:non-simple case}, 
replacing $\FF(\beta)$, $P^{\beta}$, and $j(\beta)$ 
by their corresponding $\widehat{b}$ counterparts calculates 
$j_{\widehat{b}}(\beta)$.

%%%%%%%%%%%%%%%%%%%%%%%%%%%%%%%%%%%%%%%%
\section{Partial Euler--Koszul characteristics of ranking toric modules}%%%%%
%%%%%%%%%%%%%%%%%%%%%%%%%%%%%%%%%%%%%%%%
\label{sec:comb of rank}

We retain the notation of Section~\ref{sec:r-t mods}. 
This section contains our main result, Theorem~\ref{thm-main}, 
which states that for any subset $J\subseteq\cJ(\beta)$, 
the partial Euler--Koszul characteristics of the 
ranking toric module $P_J^{\beta}$ are determined by 
the combinatorics of $\bbE_J^{\beta}$ 
(refer to Definitions~\ref{def-E--K char},~\ref{def-TS and r-t}, and~\ref{def-EP}). 
As a special case of this result, 
we will have computed $j(\beta) = \chi_2(P^{\beta},\beta)$ 
in terms of the ranking lattices $\bbE^{\beta}$, 
resulting in a proof of Theorem~\ref{thm-compute jump}. 

We begin by examining the partial Euler--Koszul characteristics 
of simple ranking toric modules 
$\PP_F^{\beta}$ from Definition~\ref{def-rtoric simple}. 
We will compute the partial Euler--Koszul characteristics of 
a ranking toric module $P_J^{\beta}$ by homologically approximating it by 
a cellular resolution (see Definition~\ref{def-cellular definition}) 
built from simple ranking toric modules. 

%%%%%%%%%%%%%%%%%%%%%%%%%%%%%%%%%%%%%%%%
\subsection{The simple case}%%%%%%%%%%%%%%%%%%%%%%%%%%
%%%%%%%%%%%%%%%%%%%%%%%%%%%%%%%%%%%%%%%%
\label{subsec:simple case}

The next theorem shows that simple ranking toric modules 
are maximal Cohen--Macaulay toric face modules, 
which will be useful in the general case.
This allows us to compute the rank jump $j(\beta)$ of $M$ at $\beta$ 
when $\beta$ is simple for $M$, as in \cite[Theorem~2.5]{okuyama}.

\begin{theorem}\label{thm-rank simple R-toric}
Fix $\beta\in\C^d$, $F\in\FF(\beta)$, 
and $J\subseteq\cJ(\beta)$. 
Then the simple ranking toric module 
$P_{F,J}^{\beta}$ is a maximal Cohen--Macaulay toric $S_F$-module.  
Further, there is a decomposition
\begin{eqnarray}\label{10}
\HH_\bullet(P_{F,J}^{\beta},\beta) =
    \C[x_{F^c}] \otimes_\C \HH^F_0(P_{F,J}^{\beta},\beta) 
    \otimes_\Z \left(\textstyle\bigwedge^\bullet \Z F^\bot\right), 
\end{eqnarray}
and for all $q\geq0$,
\[
\rank \HH_q(P_{F,J}^{\beta},\beta)
	=|B_{F,J}^{\beta}|\cdot\binom{\codim(F)}{q}\cdot \vol(F).
\]
\end{theorem}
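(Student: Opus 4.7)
The plan is to exploit the polyhedral structure of the degree set of $P_{F,J}^\beta$ to identify it as a finite direct sum of graded shifts of the saturation $\wt{S}_F$, and then invoke Proposition~\ref{prop-str CM EK complex}, the shift formula~\eqref{shift HH}, and equation~\eqref{3} to extract the decomposition~\eqref{10} and the rank formula. By definition, the degree set
\[
\PP_{F,J}^\beta \;=\; \cC_A(\beta) \cap \bbE_{F,J}^\beta \;=\; \bigsqcup_{b \in B_{F,J}^\beta} \bigl( \cC_A(\beta) \cap (b + \Z F) \bigr)
\]
decomposes as a disjoint union over $\Z F$-orbit representatives, and each piece is $\N F$-stable by Proposition~\ref{prop-simple r-t}. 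Hence
\[
P_{F,J}^\beta \;\cong\; \bigoplus_{b \in B_{F,J}^\beta} P_{F,(F,b)}^\beta
\]
as $\Z^d$-graded $S_F$-modules, reducing the claim to the case of a single orbit $(F,b)$.

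For such an orbit I would show that $\cC_A(\beta) \cap (b + \Z F)$ is a translate of the saturated semigroup $\wt{\N F}$. Since $(F,b) \in \cJ(\beta)$, we have $b + \Z F \subseteq \beta + \C F$, so $b - \beta \in \C F$. For every facet $F' \preceq A$ with $F \preceq F'$, the primitive support function $p_{F'}$ vanishes on $F$, and thus $p_{F'}(v) = p_{F'}(b) = p_{F'}(\beta)$ for all $v \in b + \Z F$; in particular $p_{F'}(\beta) \in \Z$, and the corresponding inequality defining $\cC_A(\beta)$ is satisfied on the whole orbit. For any facet $F'$ of $A$ with $F \not\preceq F'$, by contrast, $p_{F'}$ is strictly positive on some column of $F$, so its restriction to the affine subspace $b + \R F$ defines a proper half-space bounded by a supporting hyperplane of the cone $\R_{\geq 0}F$ in $\R F$. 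Intersecting these half-spaces over all such $F'$ realises $\cC_A(\beta) \cap (b + \R F)$ as a translate of $\R_{\geq 0}F$, and passing to $b + \Z F$ yields a translate of $\wt{\N F}$. Consequently $P_{F,(F,b)}^\beta$ is isomorphic, as a $\Z^d$-graded $S_F$-module, to a shift of $\wt{S}_F$.

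Since $\wt{S}_F$ is a maximal Cohen--Macaulay $S_F$-module (Hochster, as cited in Section~\ref{sec:defns}) and this property is preserved under graded shifts and finite direct sums, $P_{F,J}^\beta$ is maximal Cohen--Macaulay over $S_F$. I would then read off the homology summand by summand via Proposition~\ref{prop-str CM EK complex}: the hypothesis $\beta \in \C F$ of that proposition is secured by combining~\eqref{shift HH} with the observation that every degree of $P_{F,(F,b)}^\beta$ lies in $\beta + \C F$, so the change of basis $g_F$ appearing in the proof of Proposition~\ref{prop-str CM EK complex} trivialises the Euler operators indexed outside $\cI_F$. Equation~\eqref{3} combined with $\rank \HH_0^F(\wt{S}_F,-) = \vol(F)$---Adolphson's generic rank result, which is exact for the Cohen--Macaulay module $\wt{S}_F$---then gives $\rank \HH_q(P_{F,(F,b)}^\beta,\beta) = \binom{\codim(F)}{q} \vol(F)$, and summing over the $|B_{F,J}^\beta|$ orbits produces the stated formula.

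The principal obstacle is the polyhedral identification carried out in the second paragraph: precisely, the verification that the facet inequalities of $\R_{\geq 0} A$, when restricted to the affine subspace $b + \R F$, cut out exactly the facet inequalities of the cone $\R_{\geq 0}F$ in $\R F$. This reduces to the standard fact that each codimension-one face of $F$ is of the form $F \cap F'$ for some facet $F' \preceq A$ with $F \not\preceq F'$, together with a comparison of the primitive integral support functions of $A$ and of $F$. Once this geometric step is in hand, the remainder of the proof is an assembly from Proposition~\ref{prop-str CM EK complex}, the shift formula~\eqref{shift HH}, and equation~\eqref{3} applied to each orbit summand.
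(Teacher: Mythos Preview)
Your approach diverges from the paper's at a crucial point, and the divergence introduces a genuine gap. You claim that $\cC_A(\beta)\cap(b+\Z F)$ is a lattice translate of $\wt{\N F}$, so that $P_{F,(F,b)}^\beta$ is literally a $\Z^d$-graded shift of $\wt{S}_F$. The step ``passing to $b+\Z F$ yields a translate of $\wt{\N F}$'' is not justified and fails in general. Granting that the real polyhedron cut out in $b+\R F$ is a translate of $\R_{\geq 0}F$---with apex at (the projection of) $\beta$---its intersection with the lattice $b+\Z F$ is a translate of $\wt{\N F}$ only when that apex lies in $b+\Z F$. Nothing in the hypotheses forces $\beta-b\in\Z F$: a generic $\beta$ on a component $\beta_0+\C F$ of $\cR_A(M)$ is non-integral, and even for $\beta\in\Z^d$ one may have $\beta-b\in(\Z^d\cap\R F)\setminus\Z F$ when $\Z F$ is not saturated. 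For a concrete failure, take $F$ two-dimensional with $\Z F=\Z^2$, rays $(1,0)$ and $(1,2)$, so $\wt{\N F}=\{(x,y)\in\Z^2:y\geq 0,\ 2x-y\geq 0\}$; if the apex is $(1/2,0)$ then the lattice points in the translated cone are $\{(x,y)\in\Z^2:y\geq 0,\ 2x-y\geq 1\}$, and matching this with $b'+\wt{\N F}$ forces $2b'_1-b'_2=1$, $b'_2=0$, hence $b'_1=1/2\notin\Z$.

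The paper's argument sidesteps this entirely. Rather than asserting equality, it chooses the orbit representative $b$ so that one has only an inclusion $\PP_{F,b}^\beta\subseteq b+\wt{\N F}$, and then uses the short exact sequence
\[
0\longrightarrow P_{F,b}^\beta\longrightarrow \wt{S}_F(b)\longrightarrow \wt{S}_F(b)/P_{F,b}^\beta\longrightarrow 0
\]
together with the observation---guaranteed by the very definition of $\cC_A(\beta)$---that $\beta\notin\qdeg\bigl(\wt{S}_F(b)/P_{F,b}^\beta\bigr)$. Proposition~\ref{prop-HH qis 0} then makes the quotient invisible to Euler--Koszul homology at $\beta$, so $\HH_\bullet(P_{F,b}^\beta,\beta)\cong\HH_\bullet(\wt{S}_F,\beta-b)(b)$, after which Proposition~\ref{prop-str CM EK complex} and~\eqref{3} give the decomposition and rank exactly as you envisioned. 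Your overall architecture (reduce to one orbit, compare with $\wt{S}_F$, invoke Proposition~\ref{prop-str CM EK complex}) is correct; what needs replacing is the equality $\PP_{F,b}^\beta=b'+\wt{\N F}$ by the inclusion-plus-quasidegree argument.
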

\begin{proof}
Fix a $\Z F$-orbit representative $b\in B^{\beta}$,
chosen so that $\PP_{F,b}^{\beta}\subseteq b+\wt{\N F}$.
This implies that 
\begin{eqnarray}\label{8}
0 \rightarrow P_{F,b}^{\beta} \rightarrow
    \wt{S}_F(b) \rightarrow
    \dfrac{\wt{S}_F(b)}{P_{F,b}^{\beta}} \rightarrow 0
\end{eqnarray}
is a short exact sequence of toric modules. Since
\[
\deg\left( \dfrac{\wt{S}_F(b)}{P_{F,b}^{\beta}} \right)
    = (b+\wt{\N F})\setminus\PP_{F,b}^{\beta},
\]
the definition of $\cC_A(\beta)$ ensures that
$\beta \notin \qdeg\left( \dfrac{\wt{S}_F(b)}{P_{F,b}^{\beta}} \right)$.
Proposition~\ref{prop-HH qis 0} and 
\eqref{shift HH} imply that \eqref{8} induces the isomorphism
$\HH_\bullet(P_{F,b}^{\beta},\beta) \cong \HH_\bullet^F(\wt{S}_F,\beta-b)(b)$. 
As $\beta-b\in\C F$, Proposition~\ref{prop-str CM EK complex} 
gives the decomposition \eqref{10}, in light of Proposition~\ref{prop-P decomp}.
By \cite[Lemma~3.3]{uli monodromy}, 
$\rank\HH_0^F(\wt{S}_{F},\beta-b)(b)=\vol(F)$.
Now the additivity of rank and \eqref{3} combine to complete the claim.
\end{proof}

\begin{corollary}\label{cor-simple}
If $\beta\in\EE_A(M)$ is simple for $M$,
then the rank jump of $M$ at $\beta$ is 
\[
j_{A}(\beta) = |B^{\beta}| \cdot [\codim(F)-1] \cdot \vol(F).
\]
\end{corollary}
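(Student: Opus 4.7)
The plan is to assemble Corollary~\ref{cor-simple} as a direct computation using Lemma~\ref{lemma-chi P} together with the rank formula from Theorem~\ref{thm-rank simple R-toric}. Since $\beta$ is simple for $M$, there is a unique face $F\in\FF(\beta)$ with $P^\beta = P^\beta_{F,\cJ(\beta)}$, so $B^\beta = B^\beta_F$ (all $\Z G$-orbit representatives come from this single face). No new homological machinery is needed; the work is purely combinatorial.

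First I would invoke Lemma~\ref{lemma-chi P}: since every nontrivial ranking toric module has vanishing $0^{\text{th}}$ partial Euler--Koszul characteristic, the identity $\chi_0(P^\beta,\beta)=0$ together with the definition \eqref{5} of $j(\beta)$ yields
\[
j(\beta) \;=\; \rank\HH_1(P^\beta,\beta)-\rank\HH_0(P^\beta,\beta) \;=\; \chi_2(P^\beta,\beta) \;=\; \sum_{q=2}^{d}(-1)^{q-2}\,\rank\HH_q(P^\beta,\beta).
\]
Next I would apply Theorem~\ref{thm-rank simple R-toric} to the simple ranking toric module $P^\beta = P^\beta_F$, which gives
\[
\rank\HH_q(P^\beta,\beta) \;=\; |B^\beta|\cdot\binom{\codim(F)}{q}\cdot\vol(F)
\]
for every $q\geq0$. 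Substituting this into the expression for $j(\beta)$ and factoring out $|B^\beta|\cdot\vol(F)$ reduces the claim to the binomial identity
\[
\sum_{q=2}^{\codim(F)}(-1)^{q-2}\binom{\codim(F)}{q} \;=\; \codim(F)-1,
\]
which follows at once from $\sum_{q=0}^{c}(-1)^q\binom{c}{q}=0$ (valid for $c\geq 1$, and note $c=\codim(F)\geq 1$ since $\beta\in\EE_A(M)\subseteq\cR_A(M)$ lies on a proper face translate).

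The only subtlety worth flagging is that one must verify $\codim(F)\geq 1$, so that the binomial identity is nontrivial and the formula $\codim(F)-1\geq 0$ is consistent with $j(\beta)\geq 0$. Since $\beta\in\EE_A(M)$ and $\EE_A(M)\subseteq\cR_A(M)$ consists of translates of linear subspaces spanned by proper faces of $A$, this is automatic. I do not expect any serious obstacle: once Theorem~\ref{thm-rank simple R-toric} is in hand, the corollary is a one-line binomial cancellation.
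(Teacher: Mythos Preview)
Your proof is correct and follows essentially the same approach as the paper: both reduce to applying Theorem~\ref{thm-rank simple R-toric} to $P^{\beta}=P_F^{\beta}$ and reading off the rank jump from the resulting binomial-coefficient formula. The only cosmetic difference is that the paper invokes Corollary~\ref{cor-j decomp} and computes $\rank\HH_1-\rank\HH_0$ directly from \eqref{5}, whereas you pass through $\chi_2$ and the alternating binomial sum; your detour is harmless but unnecessary, since $\binom{\codim(F)}{1}-\binom{\codim(F)}{0}=\codim(F)-1$ already gives the answer without summing over $q\geq 2$.
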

\begin{proof}
Since $\beta$ is simple for $M$, 
$P^{\beta} = P_F^{\beta}$ for some $F\preceq A$. 
Hence apply Theorem~\ref{thm-rank simple R-toric} 
to Corollary~\ref{cor-j decomp}, 
noting that $B^{\beta} = B_F^{\beta}$. 
\end{proof}

\begin{example}
(Continuation of Examples~\ref{example-hidden} and~\ref{example-hidden2})
\label{example-hidden3}
With $b = \left[\begin{smallmatrix} 1\\1\\0 \end{smallmatrix}\right]$, 
we have the set 
$\widehat{B}^{\beta} = \left\{\widehat{\beta},\ \widehat{b} \right\}$. 
Both $P_{\cJ(\widehat{\beta})}^{\beta}$ and $P_{\cJ(\widehat{b})}^{\beta}$ 
are simple ranking toric modules. 
By Corollary~\ref{cor-simple}, 
\begin{eqnarray*}
j_{\widehat{\beta}}(\beta)  =  1\cdot [2-1]\cdot 1 \ = \ 1 
\quad \text{and} \quad
j_{\widehat{b}}(\beta)  =  1\cdot [1-1]\cdot 1 \ = \ 0.
\end{eqnarray*}
It now follows from Proposition~\ref{prop-P decomp} that $j(\beta) = 1$. 
A similar calculation shows that $j(\beta')=1$ 
for any $\beta'\in\EE_A(S_A)$.
\end{example}

\begin{example}\label{example-4diml hidden}
Let 
\[
A=\left[ \begin{array}{ccccccccccc} 
	2 & 3 & 0 & 0 & 1 & 0 & 1 & 0 & 1  & 0 & 1 \\
	0 & 1 & 2 & 0 & 0 & 1 & 1 & 0 & 0 & 1 & 1 \\
	0 & 0 & 0 & 1 & 1 & 1 & 1 & 0 & 0 & 0 & 0 \\
	0 & 0 & 0 & 0 & 0 & 0 & 0 & 1 & 1 & 1 & 1 	
	\end{array}\right]
\]
with $\vol(A) = 24$,
and consider the faces $F=[ a_1\ a_2\ a_3 ]$ and $G = [ a_3 ]$. 
Note that the semigroup $\N A$ of 
Example~\ref{example-hidden} embeds into the $\N A$ here.
With $b = \left[\begin{smallmatrix} 1\\1\\0\\0 \end{smallmatrix}\right]$, 
the exceptional arrangement of $S_A$ is $\EE_A(S_A) = b + \C F$, where  
\[
- \qdeg\left( \ext_{R}^{i}(S_A,R)( - \varepsilon_A) \right) = 
    \begin{cases}
    b+\C F & \text{ if $i = 8$,}\\
    b+\C G & \text{ if $i = 9$,}\\
    \nothing & \text{if $i > 9$.} 
\end{cases}
\]
Thus the ranking slab stratification of $\EE_A(S_A)$ is strictly finer than 
its arrangement stratification. 
Further, this finer stratification coincides with the rank stratification 
of $\EE_A(S_A)$ inside the cone $\R_{\geq0}A$. 
For $\beta\in b+\C G$, $|\widehat{B}^{\beta}| = 2$, 
while $|\widehat{B}^{\beta}| = 1$ for $\beta\in \EE_A(S_A)\setminus[b+\C G]$.  
Calculations similar to those of Example~\ref{example-hidden3} show that 
\[
j(\beta) = 
    \begin{cases}
    3 & \text{if $\beta\in b + \C G$,} \\
    1 & \text{if $\beta\in \EE_A(S_A)\setminus[b+\C G]$.}
    \end{cases}
\]
In particular, the rank of the $A$-hypergeometric system 
$\HH_0(S_A,\beta) = M_A(\beta)$ is not constant on the slab 
$[b+\C F]\subseteq \EE_A(S_A)$ (see Definition~\ref{def:slab}).
\end{example}

\begin{example}\label{example-2 lines C4}
Let $M= S_A$ for
\[
A = \left[\begin{array}{cccccccc}
    2&1&1&1&1&1&1&1\\
    0&1&1&1&1&0&0&1\\
    0&0&1&3&4&0&1&2\\
    0&0&0&0&0&1&1&1
    \end{array}\right]
\]
and consider the saturated faces $F_1=[a_1]$ and $F_2=[a_8]$.
Here $\vol(A) = 20$, $\vol(F_i) = 1$, and $\codim(F_i) = 3$.
Computations in Macaulay 2 \cite{M2} with \eqref{13} reveal that
\[
\EE_A(S_A) = [\beta'+\C F_1]\cup[\beta'+\C F_2],
\]
where
$\beta' = \left[
       \begin{smallmatrix} 1 \\ 1 \\ 2 \\ 0\end{smallmatrix}
       \right]$.
With $b =  \left[
       \begin{smallmatrix} 1 \\ 0 \\ 0 \\ 0\end{smallmatrix}
       \right]$ and 
$\beta\in\wt{\N A}\cap\EE_A(S_A)$, 
\[
\cR_A(S_A,\beta)=
    \begin{cases}
    \EE_A(S_A) & \text{if $\beta=\beta'$}\\
    \beta+\C F_1 &
        \text{if $\beta\in[\beta'+\C F_1]\setminus\beta'$,} \\
    \beta+\C F_2 &
        \text{if $\beta\in[\beta'+\C F_2]\setminus\beta'$,}
    \end{cases}
\]
\[
\PP^{\beta}=
    \begin{cases}
    [\beta+b+\N F_1]\cup [\beta+\N F_1]\cup 
    [\beta+\N F_2] & \text{if $\beta=\beta'$}\\
    [\beta+b+\N F_1]\cup \beta+\N F_1 & 
    		\text{if $\beta\in\wt{\N A}\cap[\beta'+\C F_1]\setminus\beta'$,} \\
    \beta+\N F_2 & \text{if $\beta\in\wt{\N A}\cap[\beta'+\C F_2]\setminus\beta'$,}
    \end{cases}
\]
and 
\[
\widehat{B}^{\beta}=
    \begin{cases}
    \{    \widehat{\beta+b}, \widehat{\beta} \} & \text{if $\beta=\beta'$}\\
    \{ \widehat{\beta+b}, \widehat{\beta} \} & 
    		\text{if $\beta\in\wt{\N A}\cap[\beta'+\C F_1]\setminus\beta'$,} \\
  \{ \widehat{\beta} \}  & \text{if $\beta\in\wt{\N A}\cap[\beta'+\C F_2]\setminus\beta'$.}
    \end{cases}
\]

By Corollary~\ref{cor-simple}, 
for $\beta\in \wt{\N A}\cap[\beta'+\CC F_2]\setminus\beta'$,
\[
j(\beta)\ =\
        [\codim(F_2)-1]\cdot\vol(F_2)\ =\
        [3-1]\cdot 1\ =\ 2,
\]
while for $\beta\in \wt{\N A}\cap[\beta'+\CC F_1]\setminus\beta'$, 
$|B_{F_1}^{\beta}| = 2$, and 
\[
j(\beta)\ =\
        2\cdot[\codim(F_1)-1]\cdot\vol(F_1)\ =\
        2 \cdot [3-1]\cdot 1\ =\ 4. 
\]
To compute the rank jump of $S_A$ at $\beta'$, 
we must move to the general case. 
We will see in Example~\ref{example-2 lines C4 again} 
that $j(\beta')\ =\ 4$, which arises as the sum of the generic rank jumps 
along irreducible components of $\cR_A(S_A,\beta')$ that is then 
corrected by error terms that arise from a spectral sequence calculation. 
\end{example}

%%%%%%%%%%%%%%%%%%%%%%%%%%%%%%%%%%%%%%%%
\subsection{The general case}%%%%%%%%%%%%%%%%%%%%%%%%%
%%%%%%%%%%%%%%%%%%%%%%%%%%%%%%%%%%%%%%%%
\label{subsec:non-simple case}

We are now prepared to compute the 
partial Euler--Koszul characteristics of ranking toric modules. 
The proof of our main theorem, Theorem~\ref{thm-main}, will be given at 
the end of this section, after a sequence of lemmas. 
The definitions of $\cJ(\beta)$, $\bbE_J^{\beta}$, and $P_J^{\beta}$ can be 
found in Definitions~\ref{def-EP}~and~\ref{def-TS and r-t}, respectively. 

\begin{theorem}\label{thm-main}
For $J\subseteq\cJ(\beta)$, the partial Euler--Koszul characteristics 
of the ranking toric module $P_J^{\beta}$ 
are determined by the combinatorics of $\bbE_J^{\beta}$.
\end{theorem}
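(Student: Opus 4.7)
The plan is to reduce the computation of $\chi_t(P_J^{\beta},\beta)$ to an alternating sum over simple ranking toric modules $P_{F,J}^{\beta}$, whose partial Euler--Koszul characteristics are already pinned down by Theorem~\ref{thm-rank simple R-toric} to be combinatorial data (volumes, codimensions, and orbit counts $|B_{F,J}^{\beta}|$) read off from $\bbE_J^{\beta}$. As a first reduction, Proposition~\ref{prop-P decomp} gives $P_J^{\beta}=\bigoplus_{\widehat{b}\in\widehat{B}^{\beta}} P_{J(\widehat{b})}^{\beta}$, so by additivity of $\chi_t$ it suffices to treat each equivalence class $\widehat{b}$ separately. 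Within a class, the degree set $\PP_{J(\widehat{b})}^{\beta}$ is a finite union of translated saturated face cones $\PP_{F,b}^{\beta}$ indexed by the sub-poset of $\FF(\beta)$ appearing in $J(\widehat{b})$, and this union structure is exactly what a cellular resolution is designed to resolve.

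Next, I would construct a cellular complex $I^{\bullet}_{J(\widehat{b})}$ whose term in homological degree $i$ is
\[
I^{i}_{J(\widehat{b})} = \bigoplus_{\sigma \in \Delta_i} P_{F_\sigma, J(\widehat{b})}^{\beta},
\]
where $\Delta_i$ runs over the $i$-cells of the nerve of the covering of $\PP_{J(\widehat{b})}^{\beta}$ by simple pieces, and whose differentials are assembled from the natural $S_A$-surjections of Lemma~\ref{lemma-EK morph}. Exactness of $I^{\bullet}_{J(\widehat{b})} \to P_{J(\widehat{b})}^{\beta} \to 0$ is an inclusion--exclusion identity for degree sets; by construction it holds pointwise in each $\Z^d$-graded piece, so it holds as a complex of toric modules. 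This is the place where Definition~\ref{def-cellular definition} and the hypothesis $J\subseteq\cJ(\beta)$ are both used: the cells are indexed by the pairs $(F,b)$ recorded in $\bbE_J^{\beta}$.

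Applying $\KK_\bullet(\blank,\beta)$ produces a double complex, and I would exploit its two spectral sequences. One converges to $\HH_\bullet(P_J^{\beta},\beta)$ by exactness of Euler--Koszul homology on short exact sequences of weakly toric modules. The other has $E_1$-page given by $\HH_\bullet(P_{F_\sigma, J}^{\beta},\beta)$, and here Theorem~\ref{thm-rank simple R-toric} together with Proposition~\ref{prop-str/rk image} and the overlap formula at the end of Section~\ref{sec:EKstr} control all ranks: the intersection $\image \HH_i(\pi_1,\beta)\cap\image \HH_i(\pi_2,\beta)$ has rank $\binom{\codim_{\C^d}(\C F_1 + \C F_2)}{i}\cdot \rank \HH_0^{F_1 \cap F_2}(L,\beta)$, so every rank entering the spectral sequence is a function of $\vol(F)$, $\codim(F)$, $|B_{F,J}^{\beta}|$, and the incidence pattern of faces in $\FF(\beta)$. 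Taking alternating sums, all spectral sequence differentials cancel at the level of $\chi_t$, yielding
\[
\chi_t(P_J^{\beta},\beta) = \sum_{\widehat{b}\in\widehat{B}^{\beta}} \sum_{i\geq 0} (-1)^i \sum_{\sigma\in\Delta_i} \chi_t(P_{F_\sigma, J(\widehat{b})}^{\beta},\beta),
\]
which is manifestly a function of $\bbE_J^{\beta}$ alone.

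The main obstacle is verifying that the cellular complex I assemble is genuinely exact as a complex of $S_A$-modules and that its image under $\KK_\bullet(\blank,\beta)$ produces a spectral sequence in which the higher differentials either vanish or at least contribute zero after taking $\sum_{q\geq t}(-1)^{q-t}\rank \HH_q$. The compatibility described in Lemma~\ref{lemma-EK morph} between the $\KK_\bullet$ and $\KK^F_\bullet$ presentations is what lets the exterior-algebra factors $\bigwedge^\bullet \Z F^\bot$ track overlaps consistently across cells of different dimension; once this is in place, the alternating sum in the definition of $\chi_t$ absorbs all cross-terms via the overlap identity $\Z F_1^\bot\cap\Z F_2^\bot = \Z[F_1\cup F_2]^\bot$, leaving a purely combinatorial expression in $\bbE_J^{\beta}$.
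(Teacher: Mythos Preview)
Your overall architecture---cellular resolution by simple ranking toric modules, then a double complex spectral sequence---matches the paper's. The gap is in the sentence ``Taking alternating sums, all spectral sequence differentials cancel at the level of $\chi_t$,'' and in the displayed formula
\[
\chi_t(P_J^{\beta},\beta) \;=\; \sum_{\widehat{b}} \sum_{i\geq 0} (-1)^i \sum_{\sigma\in\Delta_i} \chi_t(P_{F_\sigma, J(\widehat{b})}^{\beta},\beta).
\]
This is false for $t>0$. Partial Euler characteristics are \emph{not} invariants of spectral sequence pages: each differential $\delta_1^{p,-q}:{}'E_1^{p,-q}\to{}'E_1^{p+1,-q}$ drops the rank at two positions on the same antidiagonal $p-q=k$ and $p-q=k+1$, and when you truncate the alternating sum at $k=-t$ you pick up an uncancelled boundary contribution. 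Concretely, already in a two-column situation $I^0\to I^1$ your formula misses $\chi_t(P_J^\beta,\beta)$ by exactly $\rank({}'E_2^{1,-t})$, which is nonzero in general. The paper's formula~(6.6),
\[
\chi_t(P_J^{\beta},\beta)
  = \sum_{p-q>-t} (-1)^{p-q+t+1} \rank\HH_q(I_J^p,\beta)
 - \sum_{p-q=-t} \rank(\image \delta_1^{p,-q}),
\]
has precisely this correction term, and computing $\rank(\image\delta_1^{p,-q})$ combinatorially is where the real work lies.

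Two ingredients you are missing are therefore essential. First, one must prove that ${}'E_2={}'E_\infty$ (Lemma~\ref{lemma-ss terminates} in the paper, singled out as ``the main technical result''): without this, higher differentials $\delta_r$ for $r\geq 2$ would introduce further correction terms that are not visibly combinatorial. The degeneration argument uses the specific shape~\eqref{31} of the vertical differential on ${}'E_0^{\bullet,\bullet}$ to force any representative of $\delta_2(\overline{\alpha})$ to land in the image of ${}_v\delta$. Second, one needs a separate argument (Lemmas~\ref{lemma-'E cell rows} and~\ref{lemma-rank image}) that $\rank(\image\delta_1^{p,-q})$ is determined by $\bbE_J^{\beta}$; this is where the cellular labeling and the identity $\Z F_1^\bot\cap\Z F_2^\bot=\Z[F_1\cup F_2]^\bot$ are actually used---not to make the correction vanish, but to make it computable. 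You correctly identify this circle of ideas as ``the main obstacle,'' but your last paragraph asserts rather than proves that it is overcome.
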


We compute the partial Euler--Koszul characteristics 
of the ranking toric module $P_J^{\beta}$ 
will be achieved by homologically approximating $P_J^{\beta}$ 
by simple ranking toric modules; 
note that the ranks of the Euler--Koszul homology modules 
of simple ranking toric modules 
have been computed in Theorem~\ref{thm-rank simple R-toric}. 

\begin{definition}\label{def-cellular definition}
Let $\Delta$ be an oriented cell complex (e.g. CW, simplicial, polyhedral). 
Then $\Delta$ has the cochain complex 
\[
C^\bullet_\Delta :  \quad
0 \rightarrow \bigoplus_{\text{vertices } v \in\Delta } \Z_v \rightarrow 
\bigoplus_{\text{edges } e \in\Delta } \Z_e \rightarrow 
\cdots \rightarrow 
\bigoplus_{ \text{$i$-faces } \sigma \in \Delta} \Z_\sigma 
\rightarrow \cdots \rightarrow 0, 
\]
where $\Z_\sigma \rightarrow \Z_\tau$ 
is multiplication by some integer coeff$(\sigma,\tau)$. 
Let $\fC_\Delta$ be the category with the nonempty faces of $\Delta$ as objects 
and morphisms 
\[
\mor_{\fC_\Delta}(\sigma,\tau) = 
\begin{cases}
\{\sigma \subseteq \tau\} & \text{if $\sigma \subseteq \tau$,}\\
\nothing & \text{otherwise.}
\end{cases}
\]
Fix an abelian category $\fA$ and 
suppose there is a covariant functor 
$\Phi:\fC_\Delta \rightarrow \fA$. 
Let $P_\sigma := \Phi(\sigma)$ for each $\sigma\in\Delta$. 
A sequence of morphisms in $\fA$ 
\[
I^\bullet: \quad 
0 \rightarrow \bigoplus_{\text{vertices } v \in\Delta } P_v  \rightarrow 
\bigoplus_{\text{edges } e \in\Delta } P_e \rightarrow 
\cdots \rightarrow 
\bigoplus_{ \text{$i$-faces } \sigma \in \Delta} P_\sigma 
\rightarrow \cdots \rightarrow 0
\]
is \emph{cellular} and \emph{supported on}~$\Delta$ if 
the $P_\sigma \rightarrow P_\tau$ component 
of $I^\bullet$ is 
coeff$(\sigma,\tau)$ $\Phi(\sigma\subseteq\tau)$. 
Since $\fA$ is abelian, a cellular sequence is necessarily a complex. 
\end{definition}

In a manner analogous to Definition~\ref{def-cellular definition}, 
a cellular complex supported on $\Delta$ can also be obtained 
from the chain complex $C^\Delta_\bullet$ of $\Delta$ 
and a contravariant functor $\Phi: \fC_\Delta \rightarrow \fA$. 
Further, we could replace $C^\Delta_\bullet$ in this construction 
with the \emph{reduced} chain or cochain complexes of $\Delta$. 
We say that a complex in $\fA$ is \emph{cellular} if 
it can be constructed from the underlying topological data of a cell complex 
and a functor $\Phi:\fC_\Delta \rightarrow \fA$. 

When $\Delta$ is a simplicial or polyhedral cell complex, 
coeff$(\sigma,\tau)$ of Definition~\ref{def-cellular definition} is simply 
$1$ if the orientation of $\tau$ induces the orientation of $\sigma$ 
and $-1$ if it does not.

The generality in which we define cellular complexes 
is alluded to in the introduction of \cite{jm cellular} 
and appears as \cite[Definition~3.2]{miller-survey}. 
An introduction to these complexes, in the polyhedral case,
can be found in \cite[Chapter~4]{MS05}.

Recall from Definition~\ref{def-triangle} that $\max(J)$ was defined so that 
it yields the smallest set of faces of $A$ that determines 
a direct sum of simple ranking toric modules into which 
$P_J^{\beta}$ embeds. 

\begin{notation}\label{not-cpx I}
We wish to take intersections of faces in the set $\max(J)$. 
In order to keep track of which faces were involved in each intersection, set 
\begin{eqnarray*}
\cG^0_J &=& \{ F\in\FF(\beta) \mid \exists (F,b)\in \max(J) \}, \\ 
\cG^p_J &=& \{ s \subseteq \cG^0_J \mid |s| = p+1\}, \text{ and} \\
F_s &=& \bigcap_{G\in s} G \ \ \text{for $s\in\cG^p_J$}. 
\end{eqnarray*} 
With $r +1 = |\Delta^0_J|$, 
let $\Delta = \Delta_J^{\beta}$ be the standard $r$-simplex  
with vertices corresponding to the elements of $\Delta^0_J$. 
To the $p$-face of $\Delta$ spanned by the vertices 
corresponding to the elements in $s\in\Delta^p_J$, 
assign the ranking toric module $P_{F_s,J}^{\beta}$. 
Choosing the natural maps 
$P_{F_s,J}^{\beta} \rightarrow P_{F_t,J}^{\beta}$ 
for $s\subseteq t$ 
induces a cellular complex supported on $\Delta$, 
\begin{eqnarray}\label{9}
    I_J^\bullet:\quad 
    I_J^0 \rightarrow
    I_J^1 \rightarrow \cdots \rightarrow I_J^r \rightarrow 0
\end{eqnarray}
with
\begin{eqnarray*}
I_J^p = \bigoplus_{s\in\cG^p_J} P_{F_s,J}^{\beta}.
\end{eqnarray*}
\end{notation}

\begin{lemma}\label{lemma-I acyclic}
The cohomology of the cellular complex $I_J^\bullet$ of \eqref{9} 
is concentrated in cohomological degree 0 
and is isomorphic to $P_J^{\beta}$. 
\end{lemma}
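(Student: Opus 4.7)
My plan follows a degree-by-degree approach. Augment $I_J^\bullet$ to
\[
\widetilde{I}_J^\bullet:\ \
0 \to P_J^\beta \xrightarrow{\varepsilon} I_J^0 \to I_J^1 \to \cdots \to I_J^r \to 0,
\]
with $\varepsilon$ the diagonal of the canonical surjections $P_J^\beta \twoheadrightarrow P_{F, J}^\beta$ for $F \in \cG^0_J$, which exist because the degree-set inclusion $\PP_{F, J}^\beta \subseteq \PP_J^\beta$ yields $S_J^\beta \subseteq S_{J(F)}^\beta$. The lemma is then equivalent to exactness of $\widetilde{I}_J^\bullet$, and since this complex is $\Z^d$-graded, it suffices to prove exactness in each degree $v \in \Z^d$ separately.

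For fixed $v$, the graded piece $(\widetilde{I}_J^\bullet)_v$ is the augmented reduced cellular cochain complex (with $\C$-coefficients) of the simplicial subcomplex
\[
\Delta_v := \bigl\{\, s \in \Delta \ \big|\ v \in \PP_{F_s, J}^\beta \,\bigr\} \subseteq \Delta,
\]
since $(P_{F_s, J}^\beta)_v$ equals $\C$ when $v \in \PP_{F_s, J}^\beta$ and is $0$ otherwise. Exactness at the graded piece $v$ thus reduces to $\widetilde{H}^\bullet(\Delta_v;\C) = 0$, which follows once $\Delta_v$ is shown to be empty or contractible. Concretely, I would argue that $\Delta_v$ is empty precisely when $v \notin \PP_J^\beta$ and otherwise is a full simplex. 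The empty case is direct: a nonempty $\Delta_v$ exhibits $v \in \PP_{F_s, J}^\beta \subseteq \PP_J^\beta$, and conversely any $(F, b) \in J$ with $v \in \PP_{F, b}^\beta$ is contained in a $\trianglelefteq$-maximal pair $(F', b') \in \max(J)$, which gives a vertex $\{F'\} \in \Delta_v$.

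The simplex case is the technical heart, which I would reduce to verifying two closure properties: $\Delta_v$ is closed under nonempty subsets, and $\Delta_v$ is closed under unions. These together force $\Delta_v$ to be the full simplex on its vertex set $V_v = \{F \in \cG^0_J : v \in \PP_{F, J}^\beta\}$, which is contractible. Both closures rest on the polyhedral structure of $\FF(\beta)$, in particular on the identity $\C(F_1 \cap F_2) = \C F_1 \cap \C F_2$ for $F_1, F_2 \in \FF(\beta)$, together with the fact that $\dM$ is an $\N A$-monoid, so that each $\dM + \Z F$ is closed under monoid addition. The main obstacle will be precisely these closure properties for a general subset $J \subseteq \cJ(\beta)$, because the pair of $J$ witnessing $v \in \PP_{F_{s_1 \cup s_2}, J}^\beta$ (respectively $v \in \PP_{F_{s'}, J}^\beta$ for $s' \subseteq s$) need not be among the inputs. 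The argument must show that the $\Z F_{s_1 \cup s_2}$-orbit (respectively the $\Z F_{s'}$-orbit) of $v$ sits inside $\bbE^\beta$, by combining the avoidance conditions for $\dM + \Z F_{s_i}$ inherited from the input orbits with the monoid closure of $\dM + \Z F$, and then identify that orbit with one indexed by $J$ via the fact that $J$ records every orbit in $\bbE_J^\beta$. Once both closures are in hand, contractibility of $\Delta_v$ supplies the required cohomology vanishing and completes the degree-wise exactness of $\widetilde{I}_J^\bullet$, yielding the lemma.
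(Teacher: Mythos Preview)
Your approach is essentially the paper's: both argue degree-by-degree and reduce to showing that the degree-$\alpha$ piece of $I_J^\bullet$ is the (augmented) cochain complex of the full simplex on those vertices $F \in \cG^0_J$ with $\alpha \in \PP_{F,J}^\beta$. The paper asserts this in a single line, whereas you unpack it into the two closure properties (under subsets and under unions of~$s$) and flag the subtlety for general $J\subseteq\cJ(\beta)$ that the witnessing pair need not lie in $J$ --- a point the paper's proof also leaves implicit.
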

\begin{proof}
Given $\alpha \in \PP_J^{\beta} = \deg(P_J^{\beta})$, 
let $F_{i_1}, \dots , F_{i_k}$ 
be the faces $F \in \cG^0_J$ such that 
$\alpha \in \PP_{F,J}^{\beta}$. 
The degree $\alpha$ part of $I_J^\bullet$ computes the 
cohomology of the $(k - 1)$-subsimplex of $\Delta$ 
given by the vertices with labels corresponding to $F_{i_1}, \dots , F_{i_k}$; 
in particular, it is acyclic with $0$-cohomology 
$\C \cong (P_J^{\beta})_\alpha$. 
\end{proof}

By construction, $P_J^{\beta}$ is a 
$\Z^d$-graded monomial module over the 
saturated semigroup ring $\wt{S}_A$, and 
it can be translated by some $\alpha\in\Z^d$ so that 
$\deg(P_J^{\beta}(\alpha)) = 
\alpha + \PP_J^{\beta} \subseteq \wt{\N A} = \deg( \wt{S}_A )$. 
After translation by $\alpha$, \eqref{9} is similar to an 
\emph{irreducible resolution}, 
as defined in \cite[Definition~2.1]{cm quotients}.
We continue to view $P_J^{\beta}$ 
as an $S_A$-module, so we use 
maximal Cohen--Macaulay toric face modules 
instead of irreducible quotients of $\wt{S}_A$. 

Consider the $\Z^d$-graded double complex $E_0^{\bullet,\bullet}$ with
$E_0^{p,-q}:=\KK_q(I_J^p,\beta).$
Let ${}_h\psi_0$ and ${}_v\psi_0$ denote the horizontal and
vertical differentials of $E_0^{\bullet,\bullet}$, respectively.
By the exactness of \eqref{9}, taking homology of
$E_0^{\bullet,\bullet}$ with respect to ${}_h\psi_0$ yields
\begin{align}
{}_hE_1^{p,-q} &=
    \begin{cases} \KK_q(P_J^{\beta},\beta) &
        \text{if $p=0\text{ and }0\leq q \leq d,$}\\
    0 & \text{otherwise.}
    \end{cases} \nonumber
\intertext{Hence}
\label{36}
{}_hE_\infty^{p,-q}={}_hE_2^{p,-q} &=
    \begin{cases}
    \HH_q(P_J^{\beta},\beta) &
        \text{if $p=0\text{ and }0\leq q \leq d,$}\\
    0 & \text{otherwise.}
    \end{cases}
\intertext{On the other hand, the first page of the vertical spectral sequence 
given by $E_0^{\bullet,\bullet}$ consists of Euler--Koszul homologies 
of simple ranking toric modules:  }
\label{23}
{}_vE_1^{p,-q} &= \HH_q(I_J^p,\beta)
    = \bigoplus_{s\in\cG^p_J}
    \HH_q(P_{F_s,J}^{\beta},\beta).
\end{align}
We now apply the decomposition of these homologies given in 
Theorem~\ref{thm-rank simple R-toric} to obtain a new description 
of the Euler--Koszul homology of the ranking toric module $P_J^{\beta}$. 

\begin{lemma}\label{lemma-right ss}
The vertical spectral sequence obtained from the double complex 
\begin{eqnarray}\label{32}
\phantom{xxxx}
'E_0^{p,-q} & = & \bigoplus_{s\in\cG^p_J} \bigoplus_{i+j=q}
    \ \C[x_{F_s^c}] \otimes_\C
    \KK^{F_s}_i(P_{F_s,J}^{\beta},\beta) \otimes_\Z
    \left( \textstyle\bigwedge^j \Z F_s^\bot \right)
\end{eqnarray}
(with differentials as in Lemma~\ref{lemma-EK morph}) 
has abutment 
\[
'E_\infty^{p-q} \cong 
\begin{cases} 
\HH_q(P_J^{\beta},\beta) & \text{if $p = 0$,}\\
0 & \text{otherwise.}
\end{cases}
\]
\end{lemma}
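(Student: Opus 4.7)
The plan is to establish a quasi-isomorphism of double complexes between $'E_0^{\bullet,\bullet}$ and the original double complex $E_0^{p,-q} = \KK_q(I_J^p,\beta)$ from Notation~\ref{not-cpx I}, and then to invoke the abutment of its horizontal spectral sequence already computed in \eqref{36}. Since both spectral sequences associated to a bounded double complex converge to the same total cohomology, this will suffice.

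To construct the comparison, I would first observe that each term $P_{F_s,J}^{\beta}$ appearing in a column of $I_J^\bullet$ is, by Theorem~\ref{thm-rank simple R-toric}, a maximal Cohen--Macaulay toric $S_{F_s}$-module whose relevant degrees lie in $\beta + \C F_s$. Hence Proposition~\ref{prop-str CM EK complex} provides a quasi-isomorphism
\[
\KK_\bullet(P_{F_s,J}^{\beta},\beta) \simeq_{qis} \C[x_{F_s^c}] \otimes_\C \KK^{F_s}_\bullet(P_{F_s,J}^{\beta},\beta) \otimes_\Z \left(\textstyle\bigwedge^\bullet \Z F_s^\bot\right),
\]
which assembles over $s \in \cG^p_J$ into a column-wise quasi-isomorphism $E_0^{p,\bullet} \to {'E_0}^{p,\bullet}$ for each fixed~$p$.

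Next, the surjections $P_{F_s,J}^{\beta} \twoheadrightarrow P_{F_t,J}^{\beta}$ associated to inclusions $s \subseteq t$ (equivalently $F_t \preceq F_s$) satisfy the hypotheses of Lemma~\ref{lemma-EK morph}. That lemma ensures that the column-wise quasi-isomorphisms intertwine the horizontal differentials inherited from the cellular complex $I_J^\bullet$ on $E_0^{\bullet,\bullet}$ with the horizontal differentials defining $'E_0^{\bullet,\bullet}$, so the comparison is actually a morphism of double complexes. Both double complexes are bounded, so this term-wise quasi-isomorphism induces a quasi-isomorphism of total complexes and hence an isomorphism between the abutments of the associated spectral sequences. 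By \eqref{36}, this common abutment equals $\HH_q(P_J^{\beta},\beta)$ concentrated in column $p=0$, which is the desired form of $'E_\infty^{p,-q}$.

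The main technical obstacle is the strict (not merely up-to-homotopy) compatibility of the decomposition from Proposition~\ref{prop-str CM EK complex} with the morphisms between toric face modules induced by inclusions of faces; without this, one cannot promote the column-wise quasi-isomorphism to a morphism of double complexes. This is exactly what Lemma~\ref{lemma-EK morph} was formulated to deliver, via the compatible choices of the matrices $g_F$ and $g_G$ used in its proof. Once that rigidity is in hand, the remainder is a formal spectral sequence argument comparing the two filtrations of the total complex.
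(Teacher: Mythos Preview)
Your proposal is correct and follows essentially the same route as the paper: establish a column-wise quasi-isomorphism between $E_0^{\bullet,\bullet}=\KK_\bullet(I_J^\bullet,\beta)$ and $'E_0^{\bullet,\bullet}$ via Theorem~\ref{thm-rank simple R-toric} (through Proposition~\ref{prop-str CM EK complex}), verify compatibility with the cellular horizontal maps using Lemma~\ref{lemma-EK morph}, and then read off the common abutment from~\eqref{36}. The paper's proof is the same argument compressed to two sentences; your version simply spells out why the comparison is a morphism of double complexes and why boundedness lets you pass to total complexes.
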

\begin{proof}
By Theorem~\ref{thm-rank simple R-toric} and Lemma~\ref{lemma-EK morph}, 
the vertical differentials ${}_v\psi_0$ of $E_0^{p,-q}$ are compatible with
the quasi-isomorphism 
\begin{eqnarray}\label{eqn-ss qis}
E_0^{p,\bullet} \simeq_{qis}
\bigoplus_{s\in\cG^p_J}
    \C[x_{F_s^c}] \otimes_\C
    \KK^{F_s}_\bullet(P_{F_s,J}^{\beta},\beta) \otimes_\Z
    \left( \textstyle\bigwedge^\bullet \Z F_s^\bot \right).
\end{eqnarray}
Since ${}_hE_\infty^{\bullet,\bullet}$ and ${}_vE_\infty^{\bullet,\bullet}$ 
converge to the same abutment, the result follows from \eqref{36}. 
\end{proof}

Note that the first page of the spectral sequence in Lemma~\ref{lemma-right ss} is
\begin{eqnarray}\label{eq:nice 1st page}
'E_1^{p,-q} & = & \bigoplus_{s\in\cG^p_J}
    \C[x_{F_s^c}] \otimes_\C
    \HH^{F_s}_0(P_{F_s,J}^{\beta},\beta) \otimes_\Z
    \left( \textstyle\bigwedge^q \Z F_s^\bot \right).
\end{eqnarray}

For $s\in\cG^p_J$, let $\kappa_s$ denote the differential of
$\KK_\bullet^{F_s}(P_{F_s,J}^{\beta},\beta)$, and let
${}_v\delta, {}_h\delta$ respectively denote the vertical and 
horizontal differentials of ${'E}_0^{\bullet,\bullet}$. 
If $i+j=q$ with $i,j\geq0$, then by 
Remark~\ref{remark-diffl in CM EK complex}, 
the element
\[
f\otimes a\otimes b \in
    \C[x_{F_s^c}] \otimes_\C
    \KK_i^{F_s}(P_{F_s,J}^{\beta},\beta) \otimes_\C
    \left( \textstyle\bigwedge^q \Z F_s^\bot \right)
    \subseteq \ {'E}_0^{p,-q}
\]
has vertical differential
\begin{eqnarray}\label{31}
{}_v\delta (f\otimes a\otimes b) = f\otimes \kappa_s(a)\otimes b.
\end{eqnarray}
We will use that \eqref{31} is an element of
\[
\C[x_{F_s^c}]\otimes
\KK_{i-1}^{F_s} (P_{F_s,J}^{\beta},\beta)
\otimes_\C
\left( \textstyle\bigwedge^j \Z F_s^\bot \right)
\subseteq {'E}_0^{p,-q+1}
\]
to show that ${'E}_\bullet^{\bullet,\bullet}$ degenerates quickly. 
This is the main technical result of this article. 

\begin{lemma}\label{lemma-ss terminates}
The spectral sequence $'E_\bullet^{\bullet,\bullet}$ of
Lemma~\ref{lemma-right ss} degenerates at the second page. 
\end{lemma}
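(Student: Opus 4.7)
The plan is to introduce an auxiliary exterior-degree grading on the bicomplex ${'E}_0^{\bullet,\bullet}$, show that both differentials preserve it, and exploit the MCM property of simple ranking toric modules to force ${'d}_r = 0$ for all $r \geq 2$.

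First I would verify that both differentials of ${'E}_0^{\bullet,\bullet}$ are homogeneous of degree zero with respect to the $\bigwedge^\bullet$-grading. For the vertical differential, this is immediate from \eqref{31}: ${}_v\delta(f \otimes a \otimes b) = f \otimes \kappa_s(a) \otimes b$ acts only on the middle (Koszul) factor. For the horizontal differential, the commutative diagram of Lemma~\ref{lemma-EK morph} combined with the explicit description of $\cI_F$ and $g_F$ in the proof of Proposition~\ref{prop-str CM EK complex} shows that, for $s \subseteq t$ (equivalently $F_t \preceq F_s$), the component of ${}_h\delta$ from the $s$-summand to the $t$-summand respects the tensor decomposition \eqref{eqn-ss qis} and acts on the exterior factor by the natural inclusion $\bigwedge^j \Z F_s^\bot \hookrightarrow \bigwedge^j \Z F_t^\bot$ induced by $\Z F_s^\bot \subseteq \Z F_t^\bot$, which preserves exterior degree.

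Next, for a class $[\alpha] \in {'E}_r^{p,-q}$ with $r \geq 2$, I would follow the standard zigzag construction of ${'d}_r$. By Theorem~\ref{thm-rank simple R-toric} the vertical cohomology of ${'E}_0^{p,\bullet}$ is concentrated in Koszul degree $0$, so $[\alpha]$ admits a lift $\alpha \in {'E}_0^{p,-q}$ of pure exterior degree $q$ and Koszul degree $0$. Successively solving ${}_v\delta(\alpha_k) = {}_h\delta(\alpha_{k-1})$ yields elements $\alpha_k$ of pure exterior degree $q$ and Koszul degree $k$: the exterior degree is preserved by the first step, and the preimage in the required Koszul degree exists because ${}_h\delta(\alpha_{k-1})$ is a ${}_v\delta$-cycle of positive Koszul degree $k - 1$ (for $k \geq 2$), hence a boundary by the MCM vanishing $\HH_{k-1}^{F_t}(P_{F_t,J}^\beta, \beta) = 0$.

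Finally, ${'d}_r([\alpha])$ is represented by ${}_h\delta(\alpha_{r-1})$, which has pure exterior degree $q$ and Koszul degree $r - 1 \geq 1$. But on page~$1$ at position $(p+r, -q-r+1)$, all nonzero classes have Koszul degree $0$ and exterior degree $q+r-1$ (again by MCM), so this representative is ${}_v\delta$-exact and represents the zero class on page~$1$, hence on page~$r$. This forces ${'d}_r = 0$ for $r \geq 2$, establishing degeneration at the second page. The main obstacle will be the careful verification of the first step, which requires tracking the change-of-basis matrices $g_{F_s}, g_{F_t}$ from Proposition~\ref{prop-str CM EK complex} and confirming their compatibility with the cellular morphism $P_{F_s,J}^\beta \to P_{F_t,J}^\beta$ in a way that acts as the exterior inclusion. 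Once this step is in place, the remaining arguments follow mechanically from the zigzag formula for spectral sequence differentials and the MCM vanishing of Theorem~\ref{thm-rank simple R-toric}.
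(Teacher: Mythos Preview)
Your proposal is correct and follows essentially the same approach as the paper: both exploit the auxiliary bigrading by Koszul degree and exterior degree on ${'E}_0^{\bullet,\bullet}$, the fact that both differentials respect this bigrading (the vertical by \eqref{31}, the horizontal via Lemma~\ref{lemma-EK morph}), and the concentration of ${'E}_1$ in Koszul degree~$0$ coming from the maximal Cohen--Macaulay property of simple ranking toric modules. The paper's proof only writes out the zigzag for $\delta_2$, while you carry it through for all $\delta_r$ with $r\geq 2$; your version is slightly more complete in this respect, but the underlying mechanism is identical, and you correctly identify the compatibility of ${}_h\delta$ with the tensor decomposition as the point requiring the most care.
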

\begin{proof}
For $\xi\in {'E}_0^{i,j}$, let $\ol{\xi}$ denote the image of $\xi$ 
in ${'E}_2^{i,j}$, if it exists. 
Let $\delta_r$ denote the differential of ${'E}_r^{\bullet,\bullet}$, 
so $\delta_0 = {}_v\delta$. 

To see that $\delta_2=0$, consider an element
$\alpha\in {'E}_0^{p,-q}$ with $\oalpha\in {'E}_2^{p,-q}$.
Then there is an element $\eta\in {'E}_0^{p+1,-q-1}$
such that ${}_v\delta(\eta)={}_h\delta(\alpha)$, which is used to define
\begin{eqnarray}\label{40}
\delta_2(\oalpha)=\ol{{}_h\delta(\eta)}.
\end{eqnarray}
(Recall that \eqref{40} is independent of the choice of $\eta$.)
We write $\alpha=\sum \alpha^s_{ij}$ as an element of \eqref{32}.
Note that 
\begin{align}
{}_v\delta(\alpha^s_{ij}) &\in \C[x_{F_s^c}] \otimes_\C
\KK_{i-1}^{F_s}(P_{F_s,J}^{\beta},\beta) 
\otimes_\Z \left( \textstyle\bigwedge^j \Z F_s^\bot \right), 
\nonumber 
\intertext{so $\alpha^s_{ij}$ is in the kernel of ${}_v\delta$ for all $s,i,j$. 
By \eqref{eq:nice 1st page}, $\alpha^{s}_{ij}$ is in the image of ${}_v\delta$ 
whenever $i>0$.
Hence without changing $\oalpha$, we may assume that for all $s\in\cG^p_J$, 
$\alpha^{s}_{ij}=0$ when $i>0$, so that}
\alpha &\in \bigoplus_{s\in\cG^p_J}
\C[x_{F_s^c}] \otimes_\C \KK_0^{F_s}(P_{F_s,J}^{\beta},\beta)
\otimes_\Z \left( \textstyle\bigwedge^q \Z F_s^\bot \right).
\nonumber
\intertext{As the differential ${}_h\delta$ is induced by \eqref{9},}
{}_h\delta(\alpha) &\in \bigoplus_{s\in\cG^{p+1}_J}
\C[x_{F_s^c}] \otimes_\C
\KK_0^{F_s}(P_{F_s,J}^{\beta},\beta)
\otimes_\Z \left( \textstyle\bigwedge^q \Z F_s^\bot \right).
\nonumber
\intertext{By hypothesis on $\alpha$ and \eqref{31}, there is an element}
\label{34}
\eta &\in \bigoplus_{s\in\cG^{p+1}_J}
\C[x_{F_s^c}] \otimes_\C \KK_1^{F_s}(P_{F_s,J}^{\beta},\beta)
\otimes_\Z \left( \textstyle\bigwedge^q \Z F_s^\bot \right)
\intertext{such that ${}_v\delta(\eta)={}_h\delta(\alpha)$.
Set $\zeta={}_h\delta(\eta)$ and note that $\delta_2(\oalpha)=\ol{\zeta}$.
Using again that the differential ${}_h\delta$ is induced by \eqref{9}, 
applied now to \eqref{34}, we see that}
\zeta &\in \bigoplus_{s\in\cG^{p+2}_J}
    \C[x_{F_s^c}] \otimes_\C
    \KK_1^{F_s}(P_{F_s,J}^{\beta},\beta) \otimes_\Z
    \left( \textstyle\bigwedge^q \Z F_s^\bot \right).
\nonumber
\end{align}
Since ${}_v\delta(\zeta)={}_h\delta^2(\alpha)=0$, $\zeta$ is in the kernel of 
${}_v\delta=\delta_0$.
Hence \eqref{eq:nice 1st page} implies that $\delta_2(\oalpha)=\ol{\zeta}$ vanishes.
\end{proof}

\begin{lemma}\label{lemma-formula}
For $J\subseteq\cJ(\beta)$, 
the $t^\text{th}$ partial Euler--Koszul characteristic of the ranking toric module 
$P_J^{\beta}$ is given by 
\begin{align}
\chi_t(P_J^{\beta},\beta)
  = & \sum_{p-q>-t} (-1)^{p-q+t+1} \rank\HH_q(I_J^p,\beta)
 - \sum_{p-q=-t} \rank(\image \delta_1^{p,-q}).
    \label{11}
\end{align}
\end{lemma}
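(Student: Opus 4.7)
The plan is to exploit the vertical spectral sequence ${'E}_\bullet^{\bullet,\bullet}$ of Lemma~\ref{lemma-right ss}. By Lemma~\ref{lemma-ss terminates} this spectral sequence degenerates at its second page, and its abutment is $\HH_q(P_J^\beta,\beta)$ in total degree $-q$. Hence
\[
\rank \HH_q(P_J^\beta,\beta) \ =\ \sum_{p\geq 0} \rank {'E}_2^{p,-p-q},
\]
and, using the standard identity $\rank {'E}_2^{p,-q} = \rank \HH_q(I_J^p,\beta) - \rank \image \delta_1^{p,-q} - \rank \image \delta_1^{p-1,-q}$ valid whenever the spectral sequence collapses at the $E_2$ page, each rank of $\HH_q(P_J^\beta,\beta)$ is expressed as a sum of Euler--Koszul homology ranks of the simple ranking toric modules $I_J^p$ corrected by the two adjacent $\delta_1$-images.

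I would then substitute this expression into $\chi_t(P_J^\beta,\beta) = \sum_{q\geq t}(-1)^{q-t}\rank \HH_q(P_J^\beta,\beta)$ and collect terms. The two families of image contributions, indexed by $\delta_1^{p,\,\cdot}$ and $\delta_1^{p-1,\,\cdot}$, telescope after reindexing $p\mapsto p+1$ in the latter and using that $\delta_1^{-1,\,\cdot}=0$; only the boundary terms corresponding to $q=t$ survive. The outcome is
\[
\chi_t(P_J^\beta,\beta) \ =\ \sum_{p\geq 0,\, q\geq p+t}(-1)^{q-p-t}\rank \HH_q(I_J^p,\beta)\ -\ \sum_{p-q=-t}\rank\image\delta_1^{p,-q},
\]
whose second sum already matches the second sum in the lemma.

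To obtain the stated range $p-q>-t$ in the first sum rather than $q\geq p+t$, I would invoke Lemma~\ref{lemma-chi P}, which gives $\chi_0(P_J^\beta,\beta)=0$ for every ranking toric module. Since the Euler characteristic is invariant under passage to any page of the convergent spectral sequence ${'E}$, this vanishing translates to $\sum_{p,q}(-1)^{p-q}\rank \HH_q(I_J^p,\beta)=0$. After multiplying by $(-1)^t$ and splitting the sum at $q=p+t$, the sum over $q\geq p+t$ can be replaced by the negative of the sum over $q<p+t$; tracking the sign flip rewrites the main term as $\sum_{p-q>-t}(-1)^{p-q+t+1}\rank\HH_q(I_J^p,\beta)$, producing the formula claimed.

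The main obstacle is the careful bookkeeping of signs and index shifts: the telescoping of image terms must exploit $\delta_1^{-1,\,\cdot}=0$, and the final reformulation of the main sum over the complementary range $q<p+t$ relies crucially on the ranking toric vanishing $\chi_0(P_J^\beta,\beta)=0$. Conceptually, however, the argument is a routine application of standard spectral sequence and Euler-characteristic techniques, with Lemmas~\ref{lemma-ss terminates} and~\ref{lemma-chi P} doing the essential work.
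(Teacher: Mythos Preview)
Your proof is correct and follows essentially the same route as the paper: both arguments use the degeneration at the second page (Lemma~\ref{lemma-ss terminates}), relate $\rank{'E}_2$ to $\rank{'E}_1$ via the two adjacent $\delta_1$-images, telescope the image contributions to leave only the diagonal $p-q=-t$, and invoke $\chi_0(P_J^\beta,\beta)=0$ (Lemma~\ref{lemma-chi P}) to flip the summation range from $p-q\leq -t$ to $p-q>-t$. The only cosmetic difference is that the paper packages the computation via the diagonal sums $r_i^k=\sum_{p-q=k}\rank('E_i^{p,-q})$, whereas you work directly with the individual $(p,q)$-terms; one small inaccuracy is that the identity $\rank{'E}_2^{p,-q}=\rank{'E}_1^{p,-q}-\rank\image\delta_1^{p,-q}-\rank\image\delta_1^{p-1,-q}$ holds already at the level of passing from page~1 to page~2, independently of any later collapse.
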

\begin{proof}
For $i\in\N$, let 
\begin{align*} 
r_i^k:=\sum_{p-q=k}\rank('E_i^{p,-q}).
\end{align*}
By Lemma \ref{lemma-ss terminates}, $r_2^k=r_\infty^k$.
From the abutment \eqref{36}, we see that $r_2^k=0$ for $k>0$.
Since $\sum_{k\in\Z}(-1)^kr_1^k=0$ by Lemma~\ref{lemma-chi P}, also
$\sum_{k\in\Z}(-1)^k r_2^k=0$.
Thus the $t^\text{th}$ partial Euler--Koszul characteristic of $P_J^{\beta}$ 
can be expressed as 
\begin{align*}
\chi_t(P_J^{\beta},\beta)
  = & \sum_{k=-d}^{-t} (-1)^{k+t} r_2^k \nonumber\\
  = & \sum_{k=-d}^\infty (-1)^{k+t+1} r_1^k -
         \sum_{k=-d}^{-t} (-1)^{k+t+1} r_2^k \nonumber\\
  = & \sum_{k=-t+1}^\infty (-1)^{k+t+1} r_1^k -
        \sum_{p-q=-t} \rank(\image \delta_1^{p,-q}). 
        \nonumber
\end{align*}
Now \eqref{11} follows from the definition of $r_1^k$ 
and the quasi-isomorphism \eqref{eqn-ss qis}, 
as the isomorphic first pages of the spectral sequences there are 
\eqref{23} and \eqref{eq:nice 1st page}. 
\end{proof}

We will compute the ranks of $\HH_q(I_J^p,\beta)$
and the image of $\delta_1^{p,-q}$ from 
\eqref{11} in subsequent lemmas.
The first is an immediate consequence of 
Theorem~\ref{thm-rank simple R-toric}.

\begin{lemma}\label{lemma-rank-HqIp}
If $q \geq 0$, then
\[
\rank \HH_q(I_J^p,\beta) =
    \displaystyle\sum_{s\in\cG^p_J}
    |B_{F_s}^{\beta}| \cdot
    \binom{\codim(F_s)}{q} \cdot \vol(F_s).
\]
\end{lemma}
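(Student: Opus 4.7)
The statement is essentially a direct consequence of the definition of $I_J^p$ combined with Theorem~\ref{thm-rank simple R-toric}, so the proof should be short and mostly bookkeeping. The main plan is to decompose $\HH_q(I_J^p,\beta)$ as a direct sum indexed by $\cG^p_J$ and then apply the rank formula for simple ranking toric modules to each summand.

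First, I would recall from Notation~\ref{not-cpx I} that
\[
I_J^p = \bigoplus_{s\in\cG^p_J} P_{F_s,J}^{\beta}.
\]
Since the Euler--Koszul complex $\KK_\bullet(-,\beta)$ is an exact functor on the category of weakly toric modules (as recorded in Section~\ref{sec:defns}) and tensor products commute with finite direct sums, there is a natural isomorphism of complexes $\KK_\bullet(I_J^p,\beta) \cong \bigoplus_{s\in\cG^p_J} \KK_\bullet(P_{F_s,J}^{\beta},\beta)$. Taking homology and using that rank (over $\C(x)$) is additive on direct sums yields
\[
\rank \HH_q(I_J^p,\beta) = \sum_{s\in\cG^p_J} \rank \HH_q(P_{F_s,J}^{\beta},\beta).
\]

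The last step is to invoke Theorem~\ref{thm-rank simple R-toric}, which gives precisely
\[
\rank \HH_q(P_{F_s,J}^{\beta},\beta) = |B_{F_s,J}^{\beta}|\cdot\binom{\codim(F_s)}{q}\cdot \vol(F_s)
\]
for every $s\in\cG^p_J$. Substituting into the previous display completes the proof (interpreting the $|B_{F_s}^{\beta}|$ in the statement as $|B_{F_s,J}^{\beta}|$, in line with Notation~\ref{not-omitJ}).

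There is essentially no obstacle here: the only thing one must be careful about is that the direct sum decomposition of $I_J^p$ is preserved by Euler--Koszul homology (which follows from exactness of the functor and the fact that $\bigwedge^\bullet$ and $\otimes$ commute with finite direct sums). Every other ingredient is imported verbatim from Theorem~\ref{thm-rank simple R-toric}.
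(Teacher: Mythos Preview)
Your proof is correct and follows exactly the same approach as the paper's own proof, which simply invokes the definition of $I_J^p$, additivity of rank, and Theorem~\ref{thm-rank simple R-toric}. Your remark about reading $|B_{F_s}^{\beta}|$ as $|B_{F_s,J}^{\beta}|$ is also well taken.
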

\begin{proof}
By definition of $I_J^p$ and additivity of rank,
$\rank \HH_q(I_J^p,\beta) = 
\sum_{s\in\cG^p_J} \rank \HH_q(P_{F_s,J}^{\beta},\beta)$.
Now apply Theorem~\ref{thm-rank simple R-toric}.
\end{proof}

The rank of the image of $\delta_1^{p,-q}$ is 
determined combinatorially because the spectral sequence 
rows $'E_1^{\bullet,-q}$ are cellular complexes. 

\begin{lemma}\label{lemma-'E cell rows}
The complexes $'E_1^{\bullet,-q}$ are cellular with support 
$\Delta = \Delta^{\beta}_J$ of Notation~\ref{not-cpx I}. 
\end{lemma}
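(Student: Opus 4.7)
The plan is to exhibit $'E_1^{\bullet,-q}$ as the cellular complex associated with a covariant functor $\Phi_q : \fC_{\Delta} \to \fA$, where $\fA$ is the category of $\Z^d$-graded $D$-modules. On a face $s \in \cG^p_J$ of $\Delta$, set
\[
\Phi_q(s) \ = \ \C[x_{F_s^c}] \otimes_\C \HH^{F_s}_0(P_{F_s,J}^{\beta},\beta) \otimes_\Z \textstyle\bigwedge^q \Z F_s^{\bot},
\]
so that by \eqref{eq:nice 1st page} we have $'E_1^{p,-q} = \bigoplus_{s \in \cG^p_J} \Phi_q(s)$. For a morphism $s \subseteq t$ of $\fC_{\Delta}$, observe that $F_s = \bigcap_{G \in s} G \supseteq \bigcap_{G \in t} G = F_t$, so the natural surjection $\pi_{st} : P_{F_s,J}^{\beta} \twoheadrightarrow P_{F_t,J}^{\beta}$ used to build $I_J^{\bullet}$ in Notation~\ref{not-cpx I} exists, and the containment $\Z F_s^{\bot} \subseteq \Z F_t^{\bot}$ induces an inclusion $\bigwedge^q \Z F_s^{\bot} \hookrightarrow \bigwedge^q \Z F_t^{\bot}$. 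Define $\Phi_q(s \subseteq t)$ to be the tensor product of the map on $\HH_0^{F_s}$ induced by $\pi_{st}$, interpreted via Proposition~\ref{prop-str/rk image}, and this wedge-power inclusion. Functoriality under composition $s \subseteq t \subseteq u$ follows from the functoriality of Euler--Koszul homology.

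Next, I would trace the differential $\delta_1^{p,-q}$ back through the construction of the spectral sequence. By definition, $\delta_1^{p,-q}$ is the map induced on $v$-cohomology by the horizontal differential ${}_h\delta$ of the double complex in \eqref{32}. Under the quasi-isomorphism \eqref{eqn-ss qis}, this horizontal differential is identified with the horizontal differential of $E_0^{\bullet,\bullet}$, which is $\KK_q$ applied to the differential of $I_J^{\bullet}$. Since $I_J^{\bullet}$ is by construction cellular on $\Delta$ with functor $s \mapsto P_{F_s,J}^{\beta}$ and incidence coefficients $[s:t]$ from the standard oriented simplex structure (Definition~\ref{def-cellular definition} and Notation~\ref{not-cpx I}), the component of $\delta_1^{p,-q}$ from $\Phi_q(s)$ to $\Phi_q(t)$ vanishes unless $s \subseteq t$; and on each such nonzero component Lemma~\ref{lemma-EK morph} ensures that it equals $[s:t] \cdot \Phi_q(s \subseteq t)$.

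The main obstacle is verifying the compatibility of the quasi-isomorphism \eqref{eqn-ss qis} with the horizontal differentials in the explicit form required: specifically, one must show that the map induced on $v$-cohomology genuinely factors through the tensor decomposition with $\bigwedge^q \Z F_s^{\bot}$ in the prescribed way. Once Lemma~\ref{lemma-EK morph} supplies the commuting diagram relating the Euler--Koszul complexes on the two sides of \eqref{eqn-ss qis}, and Proposition~\ref{prop-str/rk image} identifies the image of $\HH_0^{F_s}(P_{F_s,J}^{\beta},\beta)$ inside $\HH_0^{F_t}(P_{F_t,J}^{\beta},\beta)$ as the tensor of the analogous $\HH_0$-term with $\bigwedge^{\bullet} \Z F_s^{\bot} \subseteq \bigwedge^{\bullet} \Z F_t^{\bot}$, the matching of components of $\delta_1^{\bullet,-q}$ with those of the cellular complex on $\Delta$ associated with $\Phi_q$ follows formally, which proves the lemma.
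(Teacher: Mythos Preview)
Your proposal is correct and follows essentially the same approach as the paper: relabel the simplex $\Delta_J^{\beta}$ by the modules $\C[x_{F_s^c}] \otimes_\C \HH^{F_s}_0(P_{F_s,J}^{\beta},\beta) \otimes_\Z \bigwedge^q \Z F_s^{\bot}$ in place of $P_{F_s,J}^{\beta}$, and invoke Lemma~\ref{lemma-EK morph} for the compatibility of the induced maps with the cellular structure. The paper's proof is a two-sentence version of your argument; your additional appeal to Proposition~\ref{prop-str/rk image} to describe the component maps explicitly is not needed for the bare statement of the lemma, but it is harmless and in fact anticipates how the lemma is used downstream.
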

\begin{proof}
In Notation~\ref{not-cpx I}, we constructed the cellular complex 
$I_J^\bullet$ from a labeling of the simplex $\Delta = \Delta_J^{\beta}$.  
If we assign in this construction 
\[
\C[x_{F^c}] \otimes_\C \HH_q^F(P_F^{\beta},\beta) 
\otimes_\Z \left( \textstyle\bigwedge^q \Z F^\bot \right) 
\] 
in place of $P_{F,J}^{\beta}$ 
and use the induced maps, 
we obtain the cellular complex $'E_1^{\bullet,-q}$ 
with differential $\delta_1^{\bullet,-q}$, see \eqref{3}. 
The existence and compatibility of the differentials 
follows from Lemma~\ref{lemma-EK morph}. 
\end{proof}

\begin{lemma}\label{lemma-rank image}
The rank of the image of $\delta_1^{p,-q}$ is determined by 
the combinatorics of $\bbE_J^{\beta}$.
\end{lemma}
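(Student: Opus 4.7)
The plan is to reduce the rank of $\delta_1^{p,-q}$ to numerical data attached to faces of the simplex $\Delta = \Delta_J^\beta$, and then observe that all such data is recoverable from $\bbE_J^\beta$.

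First, I would unpack the cellular structure provided by Lemma~\ref{lemma-'E cell rows}. For each $s \in \cG^p_J$, the summand at position $(p,-q)$ is
\[
\C[x_{F_s^c}] \otimes_\C \HH_0^{F_s}(P_{F_s,J}^{\beta},\beta) \otimes_\Z \left(\textstyle\bigwedge^q \Z F_s^\bot\right),
\]
and for $s \subseteq t$ (so $F_t \preceq F_s$) the corresponding component of $\delta_1^{p,-q}$ is, up to the cellular sign $\mathrm{coeff}(s,t)$, the map on $q$th Euler--Koszul homology induced by the canonical $S_A$-surjection $P_{F_s,J}^{\beta} \twoheadrightarrow P_{F_t,J}^{\beta}$. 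Proposition~\ref{prop-str/rk image} identifies the image of this latter map as
\[
\C[x_{F_t^c}] \otimes_\C \HH_0^{F_t}(P_{F_t,J}^{\beta},\beta) \otimes_\Z \left(\textstyle\bigwedge^q \Z F_s^\bot\right),
\]
a submodule of the codomain summand at $t$. In particular, the rank of each component is $|B_{F_t,J}^{\beta}| \cdot \vol(F_t) \cdot \binom{\codim(F_s)}{q}$.

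Next, I would compute $\rank(\image \delta_1^{p,-q})$ by inclusion--exclusion over the cell poset, using the intersection formula recorded at the end of Section~\ref{sec:EKstr}: for two nested pairs with common target $P_{F_t,J}^{\beta}$, the intersection of images in $q$th homology carries the exterior factor $\bigwedge^q \Z[F_{s_1} \cup F_{s_2}]^\bot$ because $\Z F_{s_1}^\bot \cap \Z F_{s_2}^\bot = \Z[F_{s_1} \cup F_{s_2}]^\bot$. Grouping the components of $\delta_1^{p,-q}$ by their target $t \in \cG^{p+1}_J$ (using additivity of rank along the direct sum decomposition of the target via Proposition~\ref{prop-P decomp} and summing over orbits in $B_{F_t,J}^\beta$), each block contributes a rank expressible purely through $|B_{F_t,J}^\beta|$, $\vol(F_t)$, and the ranks of lattice intersections $\bigwedge^q \Z[\bigcup_{s \subset t} F_s]^\bot$, with the inclusion--exclusion pattern dictated by the face lattice of the link of $t$ in $\Delta$.

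Finally, I would observe that every numerical input to this formula --- the simplex $\Delta_J^\beta$ (equivalently the poset $\cG^0_J$), the faces $F_s$ themselves, their codimensions, the volumes $\vol(F_s)$, the orbit counts $|B_{F_s,J}^\beta|$, and the lattices $\Z F^\bot$ and their intersections --- is determined by the set $\bbE_J^\beta$, since by Definition~\ref{def-EP} the pairs $(F,b)$ comprising $\cJ(\beta)$ (and the restriction to $J$) are read off from which $\Z F$-orbits appear in $\bbE_J^\beta$, and volumes, codimensions, and lattice intersections are intrinsic to those face data.

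The main obstacle is the inclusion--exclusion step: the images of the various components of $\delta_1^{p,-q}$ landing in a common target summand overlap in a way controlled by which unions $\bigcup_{s' \subsetneq t,\, s \subseteq s'} F_{s'}$ of containing faces one selects, and one must verify that the alternating sum over these overlaps simplifies to a formula depending only on the combinatorics of $\Delta$ and the labels. Once the intersection formula from Section~\ref{sec:EKstr} is iterated across the link of each $t$, the computation becomes a routine (though bookkeeping-heavy) inclusion--exclusion whose output is manifestly a function of $\bbE_J^\beta$.
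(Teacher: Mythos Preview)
Your high-level strategy --- the cellular structure of $'E_1^{\bullet,-q}$ is built from labels that are read off from $\bbE_J^\beta$, hence the rank of the differential's image is combinatorially determined --- is exactly the paper's argument. The paper's proof of this lemma is a single sentence: by Lemma~\ref{lemma-'E cell rows}, the map $\delta_1^{p,-q}$ is the cellular coboundary of $\Delta$ with the given labels, and those labels (hence the map, hence its image) come from $\bbE_J^\beta$. The explicit inclusion--exclusion formula you sketch is not needed here; the paper defers that computation to Proposition~\ref{prop:alg} in the next subsection.

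There is, however, a genuine gap in the explicit computation you outline. You propose to ``group the components of $\delta_1^{p,-q}$ by their target $t\in\cG_J^{p+1}$'' and invoke ``additivity of rank along the direct sum decomposition of the target.'' But the image of a map into a direct sum is \emph{not} in general the direct sum of its projections, so the ranks of the projections onto the $M_t$ do not sum to the rank of the image. (A single source element $m_s$ lands simultaneously in every $M_t$ with $s\subset t$, coupling the projections.) The paper's explicit formula in Proposition~\ref{prop:alg} instead groups by \emph{source} $s\in\Delta_J^p$: the image equals $\sum_s \image(\delta_{1,\{s\}}^{p,-q})$, and one computes the rank of this sum by subtracting overlaps indexed by circuits (Definition~\ref{def:circuit}). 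Your intersection formula from the end of Section~\ref{sec:EKstr} is the right ingredient, but it must be applied to images of distinct sources (possibly landing in different target summands), not only to maps sharing a common target. Once you regroup by source, your final paragraph --- that all numerical inputs are recoverable from $\bbE_J^\beta$ --- goes through as written.
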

\begin{proof} 
By Lemma~\ref{lemma-'E cell rows}, 
the image of $\delta_1^{p,-q}$ is 
determined by 
the $p$-coboundaries of $\Delta$ 
and the corresponding labels of $\Delta$, 
which come from $\bbE_J^{\beta}$. 
\end{proof}

\begin{proof}[Proof of Theorem~\ref{thm-main}]
If $J$ is simple for $M$, the result follows from 
Theorem~\ref{thm-rank simple R-toric}, so suppose that 
$P_J^{\beta}$ is not simple for $M$. 
By Lemma~\ref{lemma-I acyclic}, 
$P_J^{\beta}$ is 
the $0$-cohomology of the acyclic cellular complex $I_J^\bullet$ \eqref{9}. 
Thus the abutment of the spectral sequences arising from the 
double complex $\KK_\bullet(I_J^\bullet,\beta)$ is $\HH_\bullet(P_J,\beta)$. 
By Lemma~\ref{lemma-right ss}, the vertical spectral sequence 
obtained from the double complex $'E_0^{p,-q}$ of \eqref{32} has the same abutment. 
Since this spectral sequence degenerates on the second page by 
Lemma~\ref{lemma-ss terminates}, 
Lemma~\ref{lemma-formula} yields the formula \eqref{11}, and 
By Lemmas~\ref{lemma-rank-HqIp} and~\ref{lemma-rank image}, 
the summands of \eqref{11} are dependent only 
on the combinatorics of $\bbE_J^{\beta}$. 
\end{proof}

%%%%%%%%%%%%%%%%%%%%%%%%%%%%%%%%%%%%%%%%%
\subsection{Computing partial Euler--Koszul characteristics}%%%%%%%%%%%%
%%%%%%%%%%%%%%%%%%%%%%%%%%%%%%%%%%%%%%%%%
\label{subsec:compute}

Recall formula \eqref{11}: 
\begin{align*}
\chi_t(P_J^{\beta},\beta)
  = & \sum_{p-q>-t} (-1)^{p-q+t+1} \rank\HH_q(I_J^p,\beta)
 - \sum_{p-q=-t} \rank(\image \delta_1^{p,-q}). 
\end{align*}
Lemma~\ref{lemma-rank-HqIp} computes the first summand, but 
Lemma~\ref{lemma-rank image} does not explicitly state 
the rank of the image of $\delta_1^{p,-q}$. 
A method to do this is provided by Proposition~\ref{prop:alg}. 

\begin{definition}\label{def:circuit}
For $1< j\leq |\Delta_J^p|$, 
a subset $\lambda\subseteq \{1,2,\dots,j\}$ corresponds to a subcomplex $\Delta(\lambda)$ 
of the simplex $\Delta = \Delta_J^\beta$, as described in Notation~\ref{not-cpx I}. 
If $j\in\lambda$ and there is a minimal generator of 
$H^p( \Delta(\lambda), \Delta(\lambda)\setminus\{j\}; \C)$ of the form 
$\sum_{i\in \lambda} v_i\cdot [s_i]$, where all coefficients $v_i$ are nonzero, 
then we say that $\lambda$ is a \emph{circuit} for $j$.
\end{definition}

\begin{notation}
For $1< j\leq |\Delta_J^p|$, let
\[
\Upsilon_J^p(j) = \left\{ \lambda \subseteq \{1,2,\dots,j\} 
\mid \lambda \text{ is a circuit for } j \right\}
\]
denote the set of circuits for $j$, and set
\[
\Upsilon_J^p(j,k) = \left\{ \Lambda \subseteq \Upsilon_J^p(j) \mid |\Lambda| = k\right\}. 
\]
For $s\in\Delta_J^p$, 
$\lambda\in\Upsilon_J^p(j)$, 
and $\Lambda\in\Upsilon_J^p(j,k)$, 
set 
\begin{align}
\nonumber s_\lambda &= \{s_i \mid i\in\lambda\}, \\
\nonumber F(\Lambda) &= \bigcup_{\lambda\in\Lambda} \bigcup_{i\in \lambda} F_{s_i}, \\ 
\label{eq:N}
N^p_J(s) &= \{ 
(F,b)\in J \mid \exists t \in \Delta_J^p\setminus\{s\}
\text{ with } (b+\Z F)\subseteq 
\bbE_{F_s,J}^{\beta}\cap\bbE_{F_t,J}^{\beta}\neq\nothing \}, \\ 
\nonumber N_J^p(\lambda) &= 
\left\{ (F,b) \in N_J^p(s_{j}) \ \Big\vert\ \exists i\in \lambda\setminus\{j\} 
\text{ with } (b+\Z F) \subseteq \bbE_{F_{s_i},J}^{\beta} \right\}, \\
\nonumber N_J^p(\Lambda) &= \bigcap_{\lambda\in\Lambda} N_J^p(\lambda), \\
\nonumber \nu^{p,-q}(\Lambda) &= 
	\binom{\codim \left(F(\Lambda) \right)}{q}
	\cdot \rank \HH_0( P_{N_J^p(\Lambda)}^{\beta},\beta), \ \text{and}\\
\nonumber \nu^{p,-q}(j) &= 
	\sum_{k=1}^{|\Upsilon_J^p(j)|} \sum_{\Lambda\in\Upsilon_J^p(j,k)} 
	(-1)^{|\Lambda|+1} \cdot \nu^{p,-q}(\Lambda).
\end{align}
\end{notation}

\begin{proposition}\label{prop:alg}
Let $P_{N_J^p(s)}^\beta$ be the ranking toric module in \eqref{eq:N}. 
The rank of the image of $\delta_1^{p,-q}$ from \eqref{11} is equal to 
\begin{align}\label{eq:total im}
\rank (\image & \delta_1^{p,-q}) = 
	\sum_{s\in\Delta_J^p}
		\binom{ \codim(F_s) }{ q } \cdot \rank \HH_0( P_{N_J^p(s)}^\beta, \beta) 
	- \sum_{j=2}^{|\Delta_J^p|} \nu^{p,-q}(j). 
\end{align}
Further, \eqref{eq:total im} can be computed by combining 
Theorem~\ref{thm-rank simple R-toric} and induction on the dimension 
of $P_J^\beta$.
\end{proposition}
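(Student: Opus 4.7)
The plan is to read $\rank(\image \delta_1^{p,-q})$ directly off the cellular structure of $'E_1^{\bullet,-q}$ established in Lemma~\ref{lemma-'E cell rows}. Each summand $\C[x_{F_s^c}] \otimes_\C \HH_0^{F_s}(P_{F_s,J}^\beta,\beta) \otimes_\Z \bigwedge^q \Z F_s^\bot$ of $'E_1^{p,-q}$ is the $q$-th Euler--Koszul homology of the maximal Cohen--Macaulay simple ranking toric module $P_{F_s,J}^\beta$. For $s\subsetneq t$ in $\Delta_J^\beta$ one has $F_t\preceq F_s$, and the component map on the first page is induced from the natural map $P_{F_s,J}^\beta\to P_{F_t,J}^\beta$, whose image on Euler--Koszul homology is described explicitly by Proposition~\ref{prop-str/rk image}. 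These two ingredients pin down every component of $\delta_1^{p,-q}$ combinatorially.

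First I would derive the \emph{uncorrected} sum in \eqref{eq:total im}. Fix $s\in\Delta_J^p$ and restrict $\delta_1^{p,-q}$ to the $s$-summand $M_s$. A graded piece of $M_s$ supported on a lattice translate $b+\Z F$ contributes nontrivially to the image if and only if that translate is also contained in $\bbE_{F_t,J}^\beta$ for some neighbour $t\supsetneq s$ in $\Delta_J^\beta$---equivalently, $(F,b)\in N_J^p(s)$ by \eqref{eq:N}. Applying Theorem~\ref{thm-rank simple R-toric} to the Cohen--Macaulay module $P_{N_J^p(s)}^\beta$ then yields the per-face contribution $\binom{\codim(F_s)}{q}\cdot\rank\HH_0(P_{N_J^p(s)}^\beta,\beta)$, and summing over $s\in\Delta_J^p$ gives the leading sum.

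Next, the correction $\sum_j\nu^{p,-q}(j)$ accounts for redundancy between these per-face contributions. When several faces $s_{i_1},\ldots,s_{i_k}\in\Delta_J^p$ map into a common $(p+1)$-face of $\Delta_J^\beta$ and the cellular coefficients force a $\C$-linear relation among the induced maps, this relation is detected exactly by a minimal generator of $H^p(\Delta(\lambda),\Delta(\lambda)\setminus\{j\};\C)$---a circuit $\lambda$ for $j$ in the sense of Definition~\ref{def:circuit}. Each such circuit forces an overcount of rank $\binom{\codim F(\lambda)}{q}\cdot\rank\HH_0(P_{N_J^p(\lambda)}^\beta,\beta)$ on the simultaneous intersection $\bigcap_{i\in\lambda}\bbE_{F_{s_i},J}^\beta$, and inclusion--exclusion over collections $\Lambda\in\Upsilon_J^p(j,k)$ of circuits assembles the signed correction $\nu^{p,-q}(j)$, giving \eqref{eq:total im}.

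For the computational assertion, each ranking toric module $P_{N_J^p(\Lambda)}^\beta$ appearing on the right-hand side has supporting cell complex $\Delta_{N_J^p(\Lambda)}^\beta$ strictly smaller than $\Delta_J^\beta$, because requiring simultaneous membership in several $\bbE_{F_{s_i},J}^\beta$ is a strictly stronger constraint on pairs from $J$. Induction on the dimension of $P_J^\beta$, with the base case supplied by Theorem~\ref{thm-rank simple R-toric}, therefore expresses every $\rank\HH_0$ term in \eqref{eq:total im} as an explicit combinatorial quantity built from $\bbE_J^\beta$. The main obstacle is the circuit bookkeeping: one must verify both that every $\C$-linear dependency among the component maps arises as a signed combination of generators of $H^p(\Delta(\lambda),\Delta(\lambda)\setminus\{j\};\C)$, and that the signs coming from those cohomology classes match the orientation signs of the cellular differential $\delta_1^{p,-q}$, so that the terms $\nu^{p,-q}(\Lambda)$ subtract the overcount with exactly the right multiplicity.
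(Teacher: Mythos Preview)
Your approach matches the paper's in outline: compute per-summand images via the cellular description of $'E_1^{\bullet,-q}$, then correct for overlaps using circuits, exactly as in Lemmas~\ref{lemma:easier comp} and~\ref{lemma:harder comp}. The paper makes the overlap accounting explicit via the sequential identity
\[
\rank(\image \delta_1^{p,-q}) = \sum_s \rank(\image \delta_{1,s}^{p,-q}) - \sum_{j\geq 2}\rank\bigl[(\image \delta_{1,\{s_1,\dots,s_{j-1}\}}^{p,-q})\cap(\image \delta_{1,\{s_j\}}^{p,-q})\bigr],
\]
which you use implicitly when you invoke circuits ``for $j$''; stating it would tighten your argument.

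There are two genuine issues. First, you cannot apply Theorem~\ref{thm-rank simple R-toric} to $P_{N_J^p(s)}^\beta$: the set $N_J^p(s)$ may contain pairs $(F,b)$ with $F$ a proper face of $F_s$, so this module is typically \emph{not} simple and need not be Cohen--Macaulay over $S_{F_s}$. The factor $\binom{\codim(F_s)}{q}$ in \eqref{eq:1st end} comes instead from Proposition~\ref{prop-str/rk image} applied to the surjection $P_{F_s,J}^\beta\to(\text{image})$, and the identification $\rank(\image\delta_{1,s}^{p,0})=\rank\HH_0(P_{N_J^p(s)}^\beta,\beta)$ is a graded-degree bookkeeping argument, not a Cohen--Macaulay one.

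Second, your induction justification has a gap. Saying that $\Delta_{N_J^p(\Lambda)}^\beta$ is ``strictly smaller'' than $\Delta_J^\beta$ does not force $\dim P_{N_J^p(\Lambda)}^\beta<\dim P_J^\beta$, and the statement's induction is explicitly on dimension. The paper handles this as follows: every pair $(F,b)\in N_J^p(s)$ has $F\preceq F_s$, so either $\dim P_{N_J^p(s)}^\beta<\dim P_{F_s,J}^\beta$ (and induction applies directly), or equality holds, in which case $P_{N_J^p(s)}^\beta$ splits as the simple module $P_{F_s,N_J^p(s)}^\beta$ direct sum a strictly lower-dimensional ranking toric module (via Proposition~\ref{prop-P decomp}), and Theorem~\ref{thm-rank simple R-toric} plus induction finish the job. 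Your argument needs this splitting step to close.
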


Before providing the proof of Proposition~\ref{prop:alg}, 
we state two lemmas. 

\begin{lemma}\label{lemma:easier comp}
For $s\in \Delta_J^p$, 
\begin{align}\label{eq:1st end}
\rank (\image \delta_{1,s}^{p,-q}) = 
	\binom{ \codim(F_s) }{ q } \cdot \rank \HH_0( P_{N_J^p(s)}^\beta, \beta), 
\end{align}
where $P_{N_J^p(s)}^\beta$ is the ranking toric module given by \eqref{eq:N}. 
\end{lemma}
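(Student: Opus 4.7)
The plan is to restrict the first-page differential $\delta_1^{p,-q}$ of the spectral sequence $'E_\bullet^{\bullet,\bullet}$ to the $s$-summand, exploit the Cohen--Macaulay decompositions from Section~\ref{sec:EKstr} to peel off the exterior algebra contribution, and then identify the remaining degree-wise image with the Euler--Koszul $0$-th homology of $P_{N_J^p(s)}^\beta$.

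First I would apply Theorem~\ref{thm-rank simple R-toric} to obtain the decomposition
\[
\HH_q(P_{F_s,J}^\beta,\beta) \;=\; \C[x_{F_s^c}] \otimes_\C \HH_0^{F_s}(P_{F_s,J}^\beta,\beta) \otimes_\Z \textstyle\bigwedge^q \Z F_s^\bot,
\]
noting that the middle factor splits further across the $\Z F_s$-orbit partition of $\bbE_{F_s,J}^\beta$. By Lemma~\ref{lemma-'E cell rows}, the restricted differential $\delta_{1,s}^{p,-q}$ is the signed sum of the induced arrows into $\HH_q(P_{F_t,J}^\beta,\beta)$ for $t\in\cG_J^{p+1}$ with $s\subset t$. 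Proposition~\ref{prop-str/rk image} describes each such image in factored form and, crucially, shows that the exterior factor which survives in the image is $\bigwedge^q \Z F_s^\bot$ (the source's, of rank $\binom{\codim F_s}{q}$) rather than the larger $\bigwedge^q \Z F_t^\bot$ of the target. Assembling the arrows, the full image of $\delta_{1,s}^{p,-q}$ therefore retains the common factor $\C[x_{F_s^c}]\otimes(-)\otimes\bigwedge^q \Z F_s^\bot$, so that $\rank(\image\delta_{1,s}^{p,-q}) = \binom{\codim F_s}{q}\cdot\mu$, where $\mu$ is the rank of the induced map at the $\HH_0^{F_s}$-level.

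The remaining task, and the main obstacle, is to identify $\mu$ with $\rank\HH_0(P_{N_J^p(s)}^\beta,\beta)$. The kernel of the $\HH_0^{F_s}$-level map consists of summands indexed by $\Z F_s$-orbits of $\bbE_{F_s,J}^\beta$ whose $\cC_A(\beta)$-restricted degrees miss every $\PP_{F_t,J}^\beta$ for $t\in\cG_J^{p+1}$ containing $s$; equivalently, by orbits $(F,b)\in J$ not shared with any other simple component of $I_J^\bullet$ reached by the cellular differential. I would then compare this cellular vanishing condition against the combinatorial definition of $N_J^p(s)$ in \eqref{eq:N}, which uses shared orbits between $\bbE_{F_s,J}^\beta$ and another $p$-cell $\bbE_{F_t,J}^\beta$ rather than a $(p+1)$-cell. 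Reconciling these two shared-orbit conditions is the subtle point: it hinges on the polyhedral structure of $\FF(\beta)$ and the hereditary properties of $J\subseteq\cJ(\beta)$ to show that an orbit shared with any other $p$-cell descends to an orbit lying in the $(p+1)$-cell indexed by the corresponding face intersection, and conversely. Once this matching is verified, Theorem~\ref{thm-rank simple R-toric} with $q=0$ identifies $\mu$ with $\rank\HH_0(P_{N_J^p(s)}^\beta,\beta)$, yielding \eqref{eq:1st end}.
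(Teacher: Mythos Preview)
Your approach matches the paper's: first peel off the factor $\binom{\codim F_s}{q}$ via the decomposition of Proposition~\ref{prop-str/rk image}, then identify $\rank(\image\delta_{1,s}^{p,0})$ with $\rank\HH_0(P_{N_J^p(s)}^\beta,\beta)$ by a degree-by-degree analysis of the coimage. The paper's proof is in fact even terser than yours on the second step---it simply asserts that a degree $\alpha$ surviving in the coimage ``also appears in the degree set of another summand of $'E_1^{p,0}$'' and that this collection of degrees is exactly $\PP_{N_J^p(s)}^\beta$.

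You have therefore been \emph{more} careful than the paper in isolating what you call ``the subtle point'': the cellular differential goes from the $p$-cell $s$ to $(p{+}1)$-cells $t\supset s$, whereas the definition of $N_J^p(s)$ in \eqref{eq:N} speaks of \emph{other} $p$-cells. The paper does not explicitly reconcile these; it takes the identification for granted. Your plan---using that every $(p{+}1)$-cell $t=s\cup\{G\}$ contains $p$-cells $s'\neq s$ with $F_t\preceq F_{s'}$, so that shared degrees descend---is the right mechanism. One small correction: your final invocation of Theorem~\ref{thm-rank simple R-toric} at $q=0$ is unnecessary (and not quite apt, since $P_{N_J^p(s)}^\beta$ need not be simple); once the coimage module is identified with $P_{N_J^p(s)}^\beta$, right exactness of $\HH_0(-,\beta)$ and the degree-by-degree injectivity of $C\hookrightarrow\bigoplus_t P_{F_t,J}^\beta$ give $\mu=\rank\HH_0(P_{N_J^p(s)}^\beta,\beta)$ directly.
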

\begin{proof}
The rank of the image of $\delta_{1,s}^{p,-q}$ 
is $\binom{\codim(F_s)}{q} \cdot \rank (\image \delta_{1,s}^{p,0})$ 
by Proposition~\ref{prop-str/rk image}. 
View the image of $\delta_{1,s}^{p,0}$ as a quotient of $P_{F_s,J}^\beta$. 
If $\alpha$ is one of its nonzero multigraded components, 
then it also appears in the degree set of another summand of $'E_1^{p,0}$. 
The collection of such degrees is exactly $\PP_{N_J^p(s)}^\beta$.  
\end{proof}

\begin{lemma}\label{lemma:harder comp}
For $1< j\leq |\Delta_J^p|$, 
\begin{align}\label{eq:nu j}
\nu^{p,-q}(j) = 
	\rank \left[
	(\image \delta_{1,\{s_1,\dots,s_{j-1}\}}^{p,-q}) \cap 
	(\image \delta_{1,\{s_j\}}^{p,-q}) \right].
\end{align}
\end{lemma}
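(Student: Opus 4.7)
The plan is to establish \eqref{eq:nu j} by inclusion-exclusion on pairwise intersections, then reorganize the resulting alternating sum using the circuit structure of Definition~\ref{def:circuit}.

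First I would rewrite the left-hand intersection as a sum (i.e., a union of submodules) inside $'E_1^{p+1,-q}$:
\[
\left(\image \delta_{1,\{s_1,\dots,s_{j-1}\}}^{p,-q}\right) \cap \left(\image \delta_{1,\{s_j\}}^{p,-q}\right) = \sum_{i=1}^{j-1} W_i, \qquad W_i := \left(\image \delta_{1,\{s_i\}}^{p,-q}\right) \cap \left(\image \delta_{1,\{s_j\}}^{p,-q}\right).
\]
The observation at the end of Section~\ref{sec:EKstr} (following Example~\ref{example-EK CM comp im intersect}), together with Proposition~\ref{prop-str/rk image} and Lemma~\ref{lemma-'E cell rows}, identifies each $W_i$ with the tensor factor
\(
\C[x_{G_i^c}] \otimes_\C \HH_0^{G_i}(L_i,\beta) \otimes_\C \bigwedge^q \Z[F_{s_i}\cup F_{s_j}]^\bot,
\)
and more generally, for any $\nothing\neq I\subseteq\{1,\dots,j-1\}$, identifies $\bigcap_{i\in I} W_i$ with a submodule whose exterior factor is $\bigwedge^q \Z[\bigcup_{i\in I\cup\{j\}} F_{s_i}]^\bot$ and whose codimension-zero piece is the rank of the $\HH_0$ of the ranking toric submodule isolated by the degrees shared by all of $s_j$ and the $s_i$ for $i\in I$.

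Next I would apply the standard rank inclusion-exclusion for a sum of vector-space summands,
\[
\rank \sum_{i=1}^{j-1} W_i = \sum_{\nothing\neq I\subseteq\{1,\dots,j-1\}} (-1)^{|I|+1} \rank \bigcap_{i\in I} W_i,
\]
and match each term with the combinatorial data of $\Delta_J^p$. The key identification is that $\bigcap_{i\in I}W_i$ admits a nonzero contribution from a multidegree $\alpha$ precisely when $\alpha$ lies in every $\bbE_{F_{s_i},J}^\beta \cap \bbE_{F_{s_j},J}^\beta$ for $i\in I$; unwinding this via \eqref{eq:N} shows that the degree set is exactly $\PP_{N_J^p(I\cup\{j\})}^\beta$ and the codimension factor is $\binom{\codim(\bigcup_{i\in I\cup\{j\}}F_{s_i})}{q}$.

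The remaining and main task is to convert the sum indexed by nonempty $I\subseteq\{1,\dots,j-1\}$ into a sum indexed by collections of circuits for $j$. By Lemma~\ref{lemma-'E cell rows}, the row $'E_1^{\bullet,-q}$ is a cellular complex supported on $\Delta_J^p$, and the relative cohomology $H^p(\Delta(\lambda),\Delta(\lambda)\setminus\{j\};\C)$ in Definition~\ref{def:circuit} records precisely the minimal subsets $\lambda\ni j$ whose faces carry a class forcing the $s_j$-component of an element of $\image\delta_{1,\{s_j\}}^{p,-q}$ to coincide with the sum of $s_i$-components for $i\in\lambda\setminus\{j\}$. A Möbius inversion on the subset poset of indices, organized by which circuits each subset refines, converts
\(
\sum_{\nothing\neq I} (-1)^{|I|+1}\rank\bigcap_{i\in I}W_i
\)
into
\(
\sum_{k}\sum_{\Lambda\in\Upsilon_J^p(j,k)}(-1)^{|\Lambda|+1}\nu^{p,-q}(\Lambda),
\)
because $F(\Lambda)=\bigcup_{\lambda\in\Lambda}\bigcup_{i\in\lambda}F_{s_i}$ and $N_J^p(\Lambda)=\bigcap_{\lambda\in\Lambda}N_J^p(\lambda)$ are exactly the invariants controlling $\rank\bigcap_{i\in I}W_i$ for any $I$ refining $\Lambda$.

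The hard part is this last reorganization: I must verify that the inclusion-exclusion contributions of subsets $I$ that do not refine any circuit for $j$ cancel in aggregate, and that the remaining contributions recombine with the correct signs and without overcounting, matching exactly the defining formula of $\nu^{p,-q}(j)$. This is essentially a nerve-type identity for the cellular complex $'E_1^{\bullet,-q}$; the cellular structure furnished by Lemma~\ref{lemma-'E cell rows} and the combinatorial fact that circuits form the minimal support sets for nonzero classes in the relevant relative cohomology are what make the collapse possible.
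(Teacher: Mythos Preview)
Your very first displayed equality,
\[
\left(\image \delta_{1,\{s_1,\dots,s_{j-1}\}}^{p,-q}\right) \cap \left(\image \delta_{1,\{s_j\}}^{p,-q}\right) = \sum_{i=1}^{j-1} W_i,
\qquad W_i = \left(\image \delta_{1,\{s_i\}}^{p,-q}\right) \cap \left(\image \delta_{1,\{s_j\}}^{p,-q}\right),
\]
is false in general: intersection does not distribute over a sum of submodules. An element of $\image \delta_{1,\{s_j\}}^{p,-q}$ may lie in $\sum_{i<j}\image \delta_{1,\{s_i\}}^{p,-q}$ only as a \emph{genuine sum} of contributions from several $s_i$, without lying in any single $W_i$. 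Detecting precisely those minimal index sets $\lambda\ni j$ for which this happens is the entire content of Definition~\ref{def:circuit}; the paper's proof begins exactly here, writing the left-hand side as $\sum_{\lambda\in\Upsilon_J^p(j)}\bigl(\image \delta_{1,s_{\lambda\setminus\{j\}}}^{p,-q}\cap\image \delta_{1,\{s_j\}}^{p,-q}\bigr)$, and then applies inclusion--exclusion over the circuits $\lambda$, computing the rank of each circuit-intersection via Proposition~\ref{prop-str/rk image} and the identification $\bigcap_{i\in\lambda}\bigwedge^q\Z F_{s_i}^\bot=\bigwedge^q\Z F(\lambda)^\bot$. Your proposed M\"obius reorganization cannot repair this, because the input data---ranks of $\bigcap_{i\in I}W_i$ over subsets $I$---already discards the cross-$s_i$ cancellations that circuits record.

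A second, independent gap: the ``standard rank inclusion--exclusion'' $\rank\sum_i W_i=\sum_{\nothing\neq I}(-1)^{|I|+1}\rank\bigcap_{i\in I}W_i$ is not valid for arbitrary submodules once $|I|\geq 3$; dimension is additive only for two subspaces. The paper's argument sidesteps this by indexing over circuits from the outset and identifying, for each collection $\Lambda$ of circuits, the intersection over $\Lambda$ directly with the module whose rank is $\nu^{p,-q}(\Lambda)$, so that the alternating sum defining $\nu^{p,-q}(j)$ is \emph{by construction} the inclusion--exclusion count over those specific submodules.
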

\begin{proof}
To see this, notice first that for $1< j \leq |\Delta_J^p|$, 
\begin{align*}
(\image \delta_{1,\{s_1,\dots,s_{j-1}\}}^{p,-q}) \cap 
	(\image  \delta_{1,\{s_j\}}^{p,-q}) 
= \sum_{\lambda\in\Upsilon_J^p(j)} 
(\image \delta_{1,s_{\lambda\setminus \{j\}}}^{p,-q}) \cap 
	(\image \delta_{1,\{s_j\}}^{p,-q}) 
\end{align*}
is generated by the images coming from circuits for $j$. 
By Proposition~\ref{prop-str/rk image}, given a fixed circuit $\lambda$ for $j$, 
the rank of 
\begin{align}\label{eq:circuit intersection q}
(\image \delta_{1,s_{\lambda\setminus \{j\}}}^{p,-q}) \cap 
	(\image \delta_{1,\{s_j\}}^{p,-q}),  
\end{align}
can be computed as the rank of 
\begin{align}\label{eq:circuit intersection 0}
(\image \delta_{1,s_{\lambda\setminus \{j+1\}}}^{p,0}) \cap 
	(\image \delta_{1,\{s_{j+1}\}}^{p,0}) 
\end{align}
times the $\Z$-rank of 
\begin{align}\label{eq:zrank}
\bigcap_{i\in\lambda} \left[ \textstyle\bigwedge^q \Z F_{s_i}^\bot \right] = 
\textstyle\bigwedge^q \Z F(\lambda)^\bot. 
\end{align} 
By the same reasoning used to obtain \eqref{eq:1st end}, the rank of 
$\HH_0( P_{N_J^p(\lambda)}^{\beta},\beta)$ equals the rank of 
\eqref{eq:circuit intersection 0}. 
The $\Z$-rank of \eqref{eq:zrank} is a binomial coefficient in 
the codimension in $\C^d$ of the span of the vectors in $F(\lambda)$, 
so the rank of \eqref{eq:circuit intersection q} is 
\[
\binom{\codim \left( F(\lambda) \right)}{q}
	\cdot \rank \HH_0( P_{N_J^p(\lambda)}^{\beta},\beta). 
\]
Further, for a collection of circuits $\Lambda\in\Upsilon_J^p(j,k)$, 
$\nu^{p,-q}(\Lambda)$ gives the rank of the intersection over $\Lambda$ 
of the images of type \eqref{eq:circuit intersection q}, 
so the inclusion-exclusion principle yields \eqref{eq:nu j}. 
\end{proof}

\begin{proof}[Proof~of~Proposition~\ref{prop:alg}]
Recall from \eqref{eq:nice 1st page} that the domain of $\delta_1^{p,-q}$ 
is the direct sum 
\begin{align*}
'E_1^{p,-q} & =  \bigoplus_{s\in\cG^p_J}
    \C[x_{F_s^c}] \otimes_\C
    \HH^{F_s}_0(P_{F_s,J}^{\beta},\beta) \otimes_\Z
    \left( \textstyle\bigwedge^q \Z F_s^\bot \right).
\end{align*}
For $S\subseteq \Delta_J^p$, let $\delta_{1,S}^{p,-q}$ denote the restriction 
of $\delta_1^{p,-q}$ to the summands in $S$. 
Order the elements of $\Delta_J^p = \{ s_1,\dots,s_{|\Delta_J^p|}\}$, so that 
\begin{align} 
\label{eq:want image}
\rank (\image \delta_1^{p,-q}) 
	 = & \sum_{s\in \Delta_J^p} \rank (\image \delta_{1,s}^{p,-q}) \\
	& - \sum_{j=2}^{|\Delta_J^p|} \rank \left[
	(\image \delta_{1,\{s_1,\dots,s_{j-1}\}}^{p,-q}) \cap 
	(\image \delta_{1,\{s_j\}}^{p,-q}) \right]. \nonumber
\end{align}
Lemmas~\ref{lemma:easier comp} and ~\ref{lemma:harder comp} respectively 
computed the summands of \eqref{eq:want image}, resulting in \eqref{eq:total im}. 
Thus it remains to show the second statement. 

If a ranking toric module has dimension 0, then it is necessarily a 
simple ranking toric module, so Theorem~\ref{thm-rank simple R-toric} 
computes the rank of its Euler--Koszul homology modules. 
Thus by induction on dimension, we can compute the summand in \eqref{eq:1st end}
corresponding to $s\in\Delta_J^p$ 
if the dimension of $P_{N_J^p(s)}$ is strictly less than the dimension of 
$P_{F_s,J}^\beta$. 

If it is the case that the dimension of $P_{N_J^p(s)}$ equals the dimension of 
$P_{F_s,J}^\beta$, 
notice first that each pair $(F,b)\in N_J^p(s)$ has $F\preceq F_s$. 
This implies that $P_{N_J^p(s)}$ is a direct sum 
(as in Proposition~\ref{prop-P decomp}) of the simple ranking toric module 
$P_{F_s,N_J^p(s)}$ and a lower-dimensional ranking toric module. 
Therefore induction together with Theorem~\ref{thm-rank simple R-toric} 
still completes the computation. 

Finally, the same argument applies to computing the rank of 
$\HH_0( P_{N_J^p(\Lambda)}^{\beta},\beta)$ for $\Lambda\in\Upsilon_J^p(j,k)$, 
since ${N_J^p(\Lambda)} \subseteq N_J^p(s_j)$. 
\end{proof}

%%%%%%%%%%%%%%%%%%%%%%%%%%%%%%%%%%%%%%%%
\subsection{The combinatorics of rank jumps}%%%%%%%%%%%%%%%%%
%%%%%%%%%%%%%%%%%%%%%%%%%%%%%%%%%%%%%%%%
\label{subsec:rank jumps}

By Lemma~\ref{lemma-chi P}, our results on 
the partial Euler--Koszul characteristics of ranking toric modules 
reveal the combinatorial nature of rank jumps of 
the generalized $A$-hypergeometric system $\HH_0(M,\beta)$. 

\begin{proof}[Proof of Theorem~\ref{thm-compute jump}]
By \eqref{5} and Lemma~\ref{lemma-chi P}, 
$j(\beta) = \chi_2(P^{\beta},\beta)$, so the result is an 
immediate consequence of Theorem~\ref{thm-main} and 
Proposition~\ref{prop:alg}.
\end{proof}

\begin{example}\label{example-2 cmpts}
If $\beta\in\C^d$ is such that $\cG^0 = \{F_1,F_2\}$, 
the proof of Theorem~\ref{thm-main} and Section~\ref{subsec:compute} 
show that the rank jump of $M$ at $\beta$ is
\begin{align}\label{24}
j(\beta) = 
    \sum_{i=1}^2 \left(
    |B_{F_i}^{\beta}| \cdot [\codim(F_i)-1] \cdot \vol(F_i)
    \right) + |B_G^{\beta}| \cdot C^{\beta} \cdot\vol(G),
\end{align}
where 
$G = F_1\cap F_2$ and 
the constant $C^{\beta}$ is given by
\begin{eqnarray*}
C^{\beta} = \binom{\codim(G)}{2} - \codim(G) + 1 
    - \binom{\codim(F_1)}{2} - \binom{\codim(F_2)}{2} +
    \binom{\codim(\C F_1+\C F_2)}{2}. 
\end{eqnarray*}
\end{example}

\begin{example}
(Continuation of Example~\ref{example-2 lines C4})
\label{example-2 lines C4 again}
With $b' = \beta'+b = 
\left[\begin{smallmatrix}2\\1\\2\\0\end{smallmatrix}\right]$, 
the set $\widehat{B}^{\beta'} = \{\widehat{\beta'},\widehat{b'}\}$, and 
$P^{\beta'} = P_{\cJ(\widehat{\beta'})}^{\beta'} \oplus P_{\cJ(\widehat{b'})}^{\beta}$ 
by Proposition~\ref{prop-P decomp}. 
By \eqref{24}, 
\begin{eqnarray*}
j_{\widehat{\beta'}}(\beta) &=& 
 	\sum_{i=1}^2 \left(
    	|\widehat{\beta'} \cap B_{F_i}^{\beta}| \cdot [\codim(F_i)-1] \cdot \vol(F_i)
    	\right) \ +\ |\widehat{\beta'} \cap B_G^{\beta}| \cdot C^{\beta} \cdot\vol(G)\\
	&=& 2+2+1\cdot(-2)\cdot 1 \ = \ 2, 
\end{eqnarray*}
and 
$j_{\widehat{b'}}(\beta) = 2$
by Corollary~\ref{cor-simple}.
Thus Proposition~\ref{prop-P decomp} implies that the rank jump of the 
$A$-hypergeometric system $\HH_0(S_A,\beta') = M_A(\beta')$ is 
$j(\beta') = 4$.
\end{example}

When $d=3$ and 
$P^{\beta} = P_{\cJ(\widehat{b})}^{\beta}$ for some 
$\widehat{b}\in\widehat{B}^{\beta}$, 
\cite[Theorem~2.6]{okuyama} implies that the rank jump $j(\beta)$ of $M$ 
at $\beta$ corresponds to the reduced homology of the lattice $\FF(\beta)$.
The formula given by Okuyama involves this homology and the
volumes of the 1-dimensional faces of $A$ in $\FF(\beta)$.
For higher-dimensional cases, the cellular structure of the complex 
$I_{\cJ(\beta)}^\bullet$ of Notation~\ref{not-cpx I} shows that, in general, 
more information than the reduced homology of $\FF(\beta)$ is needed to 
compute $j(\beta)$, or even a single $j_{\widehat{b}}(\beta)$. 
 
Recall from Definition~\ref{def-ranking slab} that a ranking slab of $M$ 
is a stratum in the coarsest stratification of $\EE_A(M)$ 
that respects a specified collection of subspace arrangements. 
We are now prepared to prove Corollary~\ref{cor-strat vs equiv}, 
which states that the ranking slab stratification of $\EE_A(M)$ 
refines its rank stratification. 
From this it follows that each 
$\EE_A^i(M)=\{\beta\in\CC^d\mid j(\beta)>i\}$ 
is a union of ranking slabs, 
making each a union of translated linear subspaces of $\C^d$. 

\begin{proof}[Proof of Corollary~\ref{cor-strat vs equiv}]
If $\beta,\beta'\in\C^d$ belong to the same ranking slab, then 
the ranking lattices $\bbE^{\beta}=\bbE^{\beta'}$ coincide by 
Proposition~\ref{prop-zc E}. 
By Theorem~\ref{thm-compute jump}, the rank jumps
$j(\beta)$ and $j(\beta')$ coincide as well.
\end{proof}

\begin{corollary}\label{cor-EEA union linear translates}
For all integers $i\geq0$, $\EE_A^i(M)$ is a union of translates of linear subspaces 
that are generated by faces of $A$. 
\end{corollary}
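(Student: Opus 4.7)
The plan is to deduce this immediately from Corollary~\ref{cor-strat vs equiv} combined with the upper semi-continuity of holonomic rank. My first step will be to observe that, by Corollary~\ref{cor-strat vs equiv}, the rank jump $j(-)$ is constant on each ranking slab, so $\EE_A^i(M)$ must be a union of ranking slabs: namely those slabs $S$ on which the constant value of $j$ exceeds $i$. Next I would invoke the remarks immediately following Definition~\ref{def-ranking slab}: since each arrangement used in the definition of a ranking slab is the quasidegree set of a weakly toric module, its irreducible components are translates of linear subspaces of $\C^d$ generated by faces of $A$ (by Definition~\ref{def-qdeg}), and hence so is the closure $\overline{S}$ of each individual ranking slab.

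The remaining step is to pass from a slab $S$ to its closure $\overline{S}$ inside $\EE_A^i(M)$. For this I would use that $\EE_A^i(M)$ is Zariski closed (recalled in Section~\ref{subsec:background} from the upper semi-continuity result of \cite{MMW}). Then, for each ranking slab $S$ with $j(S)>i$, the inclusion $S\subseteq \EE_A^i(M)$ automatically extends to $\overline{S}\subseteq \EE_A^i(M)$. Combined with the fact that every parameter in $\EE_A^i(M)$ lies in some such slab, this gives
\[
\EE_A^i(M) \ = \ \bigcup_{\substack{S\text{ a ranking slab} \\ j(S)>i}} \overline{S},
\]
exhibiting $\EE_A^i(M)$ as a union of translates of linear subspaces generated by faces of $A$.

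The main — indeed only — substantive ingredient is that Definition~\ref{def-ranking slab} was engineered precisely so that closures of ranking slabs have the stated polyhedral form; once Corollary~\ref{cor-strat vs equiv} is available, the rest of the argument is purely formal. I anticipate no genuine obstacle beyond this bookkeeping.
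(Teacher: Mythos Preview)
Your argument is correct and is essentially the approach the paper takes; indeed, the remarks following Definition~\ref{def-ranking slab} already spell out exactly your reasoning. The only cosmetic difference is that the paper's one-line proof cites Theorem~\ref{thm-str comparison} together with Corollary~\ref{cor-strat vs equiv}, whereas you invoke the Zariski closedness of $\EE_A^i(M)$ (upper semi-continuity from \cite{MMW}) to pass from ranking slabs to their closures---but this is just a different choice of which already-established fact to cite for the same step.
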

\begin{proof}
This is an immediate consequence of Theorem~\ref{thm-str comparison} and
Corollary~\ref{cor-strat vs equiv}.
\end{proof}

The following is the second example promised 
at the end of Section~\ref{sec:exceptional arrangement}, 
showing that the rank of $\HH_0(M,\beta)$ need not be constant on a slab 
(see Definition~\ref{def:slab}). 
Further, this example shows that 
neither the arrangement stratification of $\EE_A(M)$ 
nor its refinement given by the $\ext$ modules in \eqref{13} 
determine its rank stratification.

\begin{example}\label{example-nonconstant slab}
Consider the matrix
\[
A = \left[\begin{array}{ccccccccccc}
    2 & 3 & 2 & 2 & 0 & 0 & 0 & 0 & 2 & 5 & 3 \\
    0 & 0 & 0 & 0 & 2 & 3 & 2 & 2 & 2 & 3 & 5 \\
    0 & 0 & 1 & 2 & 0 & 0 & 1 & 2 & 5 & 7 & 7
\end{array}\right]
\]
with $\vol(A) = 185$, and label the faces $F_1  =  [a_1\ a_2\ a_3\ a_4],$ 
$F_2  =  [a_5\ a_6\ a_7\ a_8]$, and $F_3  =  [a_9]$.
With $\beta' = \left[\begin{smallmatrix} 1 \\ 1 \\ 0 \end{smallmatrix}\right]$ 
and 
$\cP_A = \left\{
    \left[\begin{smallmatrix} 2 \\ 3 \\ 3 \end{smallmatrix}\right],
    \left[\begin{smallmatrix} 3 \\ 2 \\ 3 \end{smallmatrix}\right],
    \left[\begin{smallmatrix} 3 \\ 5 \\ 3 \end{smallmatrix}\right],
    \left[\begin{smallmatrix} 5 \\ 3 \\ 3 \end{smallmatrix}\right],
    \left[\begin{smallmatrix} 3 \\ 3 \\ 5 \end{smallmatrix}\right],
    \left[\begin{smallmatrix} 5 \\ 5 \\ 6 \end{smallmatrix}\right]
    \right\}$,
the exceptional arrangement of $S_A$ is
\[
\EE_A(S_A) = (\beta' + \CC F_3) \cup \cP_A.
\]
For $\beta \in \R_{\geq0} A \cap \EE_A(M)$,
\[
\cR_A(S_A,\beta) =
    \begin{cases}
    \beta & \text{if $\beta\in\cP_A$,}\\
    \beta'+\CC F_3 &
    \text{if $\beta \in [\beta' + \CC F_3]\setminus \beta'$,}\\
    \bigcup_{i=1}^3 [\beta' + \CC F_i]
    & \text{if $\beta = \beta'$,}\\
    \end{cases}
\]
so by the proof of Theorem~\ref{thm-main}, 
the rank jump of $M$ at $\beta\in\EE_A(M)$ is 
\[
j(\beta) =
    \begin{cases}
    1 & \text{if $\beta\in[\beta'+\CC F_3]\setminus\beta'$,}\\
    2 & \text{otherwise.}
    \end{cases}
\]
Here the arrangement stratification of $\EE_A(S_A)$ agrees with 
the one given by the $\ext$ modules that determine it, 
but $j(-)$ is not constant on the slab 
$[\beta'+\C F_3]\subseteq\EE_A(S_A)$. 
\end{example}\smallskip

To show that all of the arrangements in the definition of ranking slabs 
(Definition~\ref{def-ranking slab}) are necessary to obtain a refinement 
of the rank stratification of $\EE_A(M)$, we include 
the following example. Here, $j(\beta)$ changes 
where components of $\EE_A(S_A)$ that 
correspond to different $\ext$ modules intersect.

\begin{example}\label{plane-line-4diml}
Let 
\[
A = \left[\begin{array}{ccccccccccc}
    2 & 3 & 0 & 0 & 0 & 0  & 0 & 1 & 0 & 1 \\
    0 & 0 & 2 & 3 & 0 & 1  & 0 & 0 & 1 & 1 \\
    0 & 0 & 0 & 0 & 1 & 1  & 0 & 0 & 0 & 0 \\
    0 & 0 & 0 & 0 & 0 & 0  & 1 & 1 & 1 & 1 
\end{array}\right],
\]
$F  =  [a_1\ a_2]$, $G  =  [a_3\ a_4\ a_5\ a_6]$, and 
$\beta' = \left[\begin{smallmatrix} 1 \\ 1 \\ 0 \\ 0 \end{smallmatrix}\right]$. 
Here $\vol(A) = 21$, and 
the exceptional arrangement of $S_A$ is
\[
\EE_A(S_A) = [\beta' + \C F] \cup [\beta' + \C G], 
\]
where 
\[
- \qdeg \left( \ext_{R}^i(S_A,R)(-\varepsilon_A) \right) = 
\begin{cases}
    b+\C G & \text{ if $i = 7$,}\\
    b+\C F & \text{if $i = 8$,}\\
    \nothing & \text{if $i > 8$.} 
\end{cases}
\]
By Corollary~\ref{cor-simple} and Example~\ref{example-2 cmpts}, 
\[
j(\beta) = 
    \begin{cases}
    9 & \text{if $\beta = \beta'$,} \\
    6 & \text{if $\beta\in[\beta'+\C F]\setminus\beta'$,} \\
    4 & \text{if $\beta\in[\beta'+\C G]\setminus\beta'$.}
    \end{cases}
\]
\end{example}

We include a final example to show that $j(\beta)$ is 
not determined simply by $\wt{\N A}\setminus \N A$, 
the holes in the semigroup $\N A$. 

\begin{example}\label{ex-first example}
The matrix 
\[
A=\left[\begin{matrix}
    2&3&0&0&1&0&1\\
    0&0&2&3&0&1&1\\
    0&0&0&0&1&1&1
    \end{matrix}\right]
\]
has volume 16. 
The exceptional arrangement of $M=S_A$
is the union of four lines and a point: 
\[
\EE_A(S_A) =
    \left(
    \left[\begin{smallmatrix}
    1\\ 1\\ 0
    \end{smallmatrix}\right]
     + \C F
    \right)
    \cup
    \left(
    \left[\begin{smallmatrix}
    1\\ 1\\ 0
    \end{smallmatrix}\right]
     + \C G
    \right)
    \cup
    \left(
    \left[\begin{smallmatrix}
    1\\ 0\\ 0
    \end{smallmatrix}\right]
     + \C[a_5]
    \right)
    \cup
    \left(
    \left[\begin{smallmatrix}
    0\\ 1\\ 0
    \end{smallmatrix}\right]
     + \C[a_6]
    \right)
    \cup
    \left\{
    \left[\begin{smallmatrix}
    2\\ 2\\ 1
    \end{smallmatrix}\right]
    \right\}, 
\]
where $F = [a_1\ a_2]$ and $G = [a_3\ a_4]$. 
The generic rank jumps along each component are as follows: 
\begin{align*}
    \left[\begin{smallmatrix}
    1\\ 1\\ 0
    \end{smallmatrix}\right]
     + \C F
     \mapsto  3, \quad  
    \left[\begin{smallmatrix}
    1\\ 0\\ 0
    \end{smallmatrix}\right]
     + \C[a_5]
     \mapsto  1, \quad 
    \left[\begin{smallmatrix}
    1\\ 1\\ 0
    \end{smallmatrix}\right]
     + \C G
     \mapsto  3, \quad
    \left[\begin{smallmatrix}
    0\\ 1\\ 0
    \end{smallmatrix}\right]
     + \C[a_6]
     \mapsto  1, \quad
    \text{and }
    \left[\begin{smallmatrix}
    2\\ 2\\ 1
    \end{smallmatrix}\right]
     \mapsto  2.  
\end{align*}
These generic rank jumps are achieved everywhere except at the points
\[
    \left[\begin{smallmatrix}
    1\\ 1\\ 0
    \end{smallmatrix}\right],
    \left[\begin{smallmatrix}
    1\\ 0\\ 0
    \end{smallmatrix}\right],
    \left[\begin{smallmatrix}
    0\\ 1\\ 0
    \end{smallmatrix}\right], \text{ and }
    \left[\begin{smallmatrix}
    \phantom{-}0\\ \phantom{-}0\\ -1
    \end{smallmatrix}\right].
\]
The point that may be unexpected in this collection is 
	$b = \left[\begin{smallmatrix} 
	\phantom{-}0\\ \phantom{-}0\\ -1
	\end{smallmatrix}\right]$. 
Both of the components of $\EE_A(S_A)$ that contain $b$ 
have generic rank jumps of 1; 
however, $j(b) = 2$. 
This is because the ranking arrangement of $S_A$ 
has three components that contain $b$: 
\begin{eqnarray}\label{eqn-R b 1st ex}
\cR_A(S_A, b) =
    \left(    b + \C[a_5]    \right)	    \cup
    \left(    b + \C[a_6]    \right)    \cup
    \left(    b + \C[a_1\ a_2\ a_3\ a_4]    \right). 
\end{eqnarray}
If the plane $(b + \C[a_1\ a_2\ a_3\ a_4])$ were not in $\cR_A(S_A,\beta)$, 
then the rank jump of $S_A$ at $b$ would only be 1 by \eqref{24}. 
Thus the hyperplane in \eqref{eqn-R b 1st ex}, although unrelated to the holes in 
$\N A$, accounts for the higher value of $j(b)$. 

The rank jumps at the other parameters are \
\[
j\left( S_A, 
    \left[\begin{smallmatrix}
    1\\ 0\\ 0
    \end{smallmatrix}\right] \right) =
j\left( S_A, 
    \left[\begin{smallmatrix}
    0\\ 1\\ 0
    \end{smallmatrix}\right]
\right) = 3
\quad \text{ and } \quad
j\left( S_A, 
    \left[\begin{smallmatrix}
    1\\ 1\\ 0
    \end{smallmatrix}\right]
\right) = 5.
\]

\end{example}

The algebraic upper semi-continuity of the rank of $\HH_0(M,\beta)$ implies that 
most of the codimension 1 components of the ranking arrangement 
$\cR_A(M)$ do not increase the rank of $\HH_0(M,\beta)$. 
It would interesting to know if the set of such hyperplanes can be identified. 

%%%%%%%%%%%%%%%%%%%%%%%%%%%%%%%%%%%%%%%%%
\section{The isomorphism classes of $A$-hypergeometric systems}%%%%%%%%
%%%%%%%%%%%%%%%%%%%%%%%%%%%%%%%%%%%%%%%%%
\label{sec:isom classes}

When $M=S_A$, the results of Section~\ref{sec:comb of rank} apply to 
the $A$-hypergeometric system $\HH_0(S_A,\beta) = M_A(\beta)$.
For a face $\tau$ of $A$, 
\[
E_\tau(\beta)
=\{\lambda\in\C\tau\mid\beta-\lambda\in\N A+\Z\tau\}/\Z\tau
\]
is a finite set.
It is shown in \cite{saito isom} and \cite{ST diffl algs} that $M_A(\beta)$ and 
$M_A(\beta')$ are isomorphic as $D$-modules precisely when
$E_\tau(\beta)=E_\tau(\beta')$ for all faces $\tau$ of $A$. 
We will now use Euler--Koszul homology to give a simple proof of one 
direction of this equivalence; first we exhibit a complementary
relationship between $E_\tau(\beta)$ and  $\bbE_\tau^{\beta}$ 
(see \eqref{eqn-partition bbE}). 

\begin{observation}\label{obs-Etau}
It is shown in Theorem~\ref{thm-compute jump} 
that as $\tau$ runs through the faces of $A$, the sets
\begin{eqnarray*}
\bbE_\tau^{\beta} = \Z^d \cap 
(\beta+\C\tau)\setminus(\N A+\Z\tau)
= \beta - \{\lambda \in\C\tau
    \mid \beta-\lambda\in\Z^d\setminus(\N A + \Z\tau) \},
\end{eqnarray*}
determine the rank jump of $M_A(\beta)$ at $\beta$. 
Notice that $(\beta - \bbE_\tau^{\beta})/\Z\tau$ is the
complement of $E_\tau(\beta)$ in the group
$(\Z^d\cap\Q\tau)/\Z\tau$.
\end{observation}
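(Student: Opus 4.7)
The proof of Observation~\ref{obs-Etau} amounts to unpacking definitions, since the assertion that the sets $\bbE_\tau^\beta$ determine the rank jump of $M_A(\beta)$ is an immediate consequence of Theorem~\ref{thm-compute jump} applied to $M = S_A$. My plan is therefore to verify the two set-theoretic identities in turn.

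For the alternate description
\[
\bbE_\tau^\beta \ =\ \beta - \{\lambda \in \C\tau \mid \beta - \lambda \in \Z^d \setminus (\N A + \Z\tau)\},
\]
I would substitute $\lambda = \beta - v$ directly into the defining expression from Definition~\ref{def-EP}. Since $\C\tau$ is a $\C$-linear subspace and hence closed under negation, the condition $v \in \beta + \C\tau$ is equivalent to $\lambda = \beta - v \in \C\tau$; and the remaining condition $v \in \Z^d \setminus (\N A + \Z\tau)$ translates verbatim to $\beta - \lambda \in \Z^d \setminus (\N A + \Z\tau)$. This step is pure relabeling.

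For the complementary relationship, I would note that both $\beta - \bbE_\tau^\beta$ and (a set of representatives of) $E_\tau(\beta)$ are described by conditions on $\beta - \lambda$ for $\lambda \in \C\tau$: the former requires $\beta - \lambda \in \Z^d \setminus (\N A + \Z\tau)$, while the latter requires $\beta - \lambda \in \N A + \Z\tau$. These are manifestly complementary conditions on elements of $\Z^d$, and both force $\beta - \lambda \in \Z^d$, so together they partition $\{\lambda \in \C\tau \mid \beta - \lambda \in \Z^d\} = (\beta + \Z^d) \cap \C\tau$. Passing to the quotient modulo $\Z\tau$ and identifying the coset $[(\beta + \Z^d) \cap \C\tau]/\Z\tau$ with the finite group $(\Z^d \cap \Q\tau)/\Z\tau$ by translation against any chosen base point---using $\Z^d \cap \C\tau = \Z^d \cap \Q\tau$ by discreteness of $\Z^d$---yields the claimed complementary decomposition.

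There is no genuine obstacle here: the observation is a translation between two combinatorial languages, with the only mild subtlety being the identification of a coset with the ambient finite group $(\Z^d \cap \Q\tau)/\Z\tau$. The substance of the statement lies in the pairing it sets up, namely that the rank-jump data encoded in $\bbE_\tau^\beta$ and the isomorphism-class data encoded in $E_\tau(\beta)$ are complementary within the same finite group. This is the combinatorial input that will feed into the subsequent Euler--Koszul proof that $E_\tau(\beta) = E_\tau(\beta')$ for every face $\tau \preceq A$ forces $M_A(\beta) \cong M_A(\beta')$.
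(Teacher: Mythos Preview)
Your proposal is correct and matches the paper's treatment: the paper presents this as an observation without proof, treating both identities as immediate from the definitions of $\bbE_\tau^\beta$ (Definition~\ref{def-EP}) and $E_\tau(\beta)$, exactly as you unpack them. Your remark on the coset identification is the only point requiring a moment's thought, and you handle it appropriately.
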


\begin{lemma}\label{lemma-Nisom}
If $\beta,\beta'\in\C^d$ are such that $\beta'-\beta = A\lambda$ 
for some $\lambda\in\N^n$, then the map defined by right multiplication
$\del_A^{\lambda}:M_A(\beta)\rightarrow M_A(\beta')$
is an isomorphism of $D$-modules if and only if
$\beta' \notin \qdeg\left( S_A/\<\del_A^{\lambda}\> \right).$
\end{lemma}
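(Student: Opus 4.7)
The plan is to realize right multiplication by $\del_A^\lambda$ as the map induced in Euler--Koszul homology by a short exact sequence of toric modules, and then extract the desired equivalence from the quasidegree vanishing criterion of Proposition~\ref{prop-HH qis 0} applied to the quotient $S_A/\<\del_A^\lambda\>$.

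Since $I_A$ is prime, $S_A$ is a domain, so multiplication by the nonzero element $\del_A^\lambda$ defines an injection $S_A \to S_A$ of degree $A\lambda$. With the appropriate $\Z^d$-grading shift, this becomes a degree-preserving morphism fitting into a short exact sequence of toric modules
\[
0 \to S_A(-A\lambda) \xrightarrow{\cdot \del_A^\lambda} S_A \to S_A/\<\del_A^\lambda\> \to 0.
\]
Applying the exact functor $\HH_\bullet(-,\beta')$ and using \eqref{shift HH} together with $\beta' - \beta = A\lambda$ to identify $\HH_q(S_A(-A\lambda),\beta')$ with $M_A(\beta)$ up to a $\Z^d$-grading shift, the long exact sequence restricts to
\[
\HH_1\bigl(S_A/\<\del_A^\lambda\>,\beta'\bigr) \to M_A(\beta) \xrightarrow{\tilde\phi} M_A(\beta') \to \HH_0\bigl(S_A/\<\del_A^\lambda\>,\beta'\bigr) \to 0.
\]
Tracing a representative $[P\otimes 1]$ through these identifications shows that $\tilde\phi$ is precisely the map $[P]\mapsto[P\del_A^\lambda]$ of the statement.

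From here Proposition~\ref{prop-HH qis 0} supplies both implications cleanly. For the ``if'' direction, the assumption $\beta'\notin\qdeg(S_A/\<\del_A^\lambda\>)$ kills \emph{every} Euler--Koszul homology module of $S_A/\<\del_A^\lambda\>$ at $\beta'$, so both flanking terms in the displayed sequence vanish and $\tilde\phi$ is an isomorphism. Conversely, if $\tilde\phi$ is an isomorphism, then in particular its cokernel $\HH_0(S_A/\<\del_A^\lambda\>,\beta')$ is zero, and a single further application of Proposition~\ref{prop-HH qis 0} gives $\beta'\notin\qdeg(S_A/\<\del_A^\lambda\>)$.

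The main technical point I expect to require care is the identification of the connecting map $\tilde\phi$ with right multiplication by $\del_A^\lambda$. This rests on a correct use of the grading shift formula \eqref{shift HH} relative to $\beta' - \beta = A\lambda$ and a routine chase of a representative through the long exact sequence. Once that identification is in place, the proof is essentially a two-for-one use of Proposition~\ref{prop-HH qis 0}: surjectivity of $\tilde\phi$ alone already forces the \emph{higher} Euler--Koszul vanishing of $S_A/\<\del_A^\lambda\>$ at $\beta'$, which in turn supplies the injectivity needed to close the ``if'' direction.
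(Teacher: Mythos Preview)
Your argument is correct and is the standard one: the short exact sequence $0\to S_A(-A\lambda)\xrightarrow{\cdot\del_A^\lambda}S_A\to S_A/\<\del_A^\lambda\>\to 0$, the long exact sequence in Euler--Koszul homology together with \eqref{shift HH}, and Proposition~\ref{prop-HH qis 0} give both implications exactly as you describe. The paper itself does not prove this lemma but simply refers to \cite[Remark~3.6]{EKDI}; your write-up is essentially what that reference contains, so there is nothing to compare.

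One minor expository quibble: your closing remark that ``surjectivity of $\tilde\phi$ alone already forces the higher Euler--Koszul vanishing \ldots\ which in turn supplies the injectivity needed to close the `if' direction'' conflates the two directions. In the ``if'' direction the hypothesis $\beta'\notin\qdeg(S_A/\<\del_A^\lambda\>)$ already gives \emph{all} vanishing at once via Proposition~\ref{prop-HH qis 0}, so no bootstrap is needed. The observation you are really making---that surjectivity of $\tilde\phi$ implies it is an isomorphism---is a pleasant corollary (surjective $\Rightarrow \HH_0=0 \Rightarrow \beta'\notin\qdeg \Rightarrow$ all $\HH_i=0 \Rightarrow$ injective), but it is not part of either direction of the lemma as stated.
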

\begin{proof}
See \cite[Remark~3.6]{EKDI}.
\end{proof}

For a vector $v\in\C^n$, let the {\it support} of $v$ be the subset 
$\supp(v) = \{a_i\mid v_i\neq0\}$ of the columns of $A$.

\begin{lemma}\label{lemma-NEtau}
If $\beta,\beta'\in\C^d$ are such that $\beta'-\beta = A\lambda$ 
for some $\lambda\in\N^n$ and $E_\tau(\beta)=E_\tau(\beta')$ 
for all faces $\tau$ of $A$, then the map defined by right multiplication
$\del_A^{\lambda}:M_A(\beta)\rightarrow M_A(\beta')$ is an isomorphism.
\end{lemma}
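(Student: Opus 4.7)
The plan is to invoke Lemma~\ref{lemma-Nisom}, which reduces everything to showing $\beta' \notin \qdeg(S_A/\<\del_A^\lambda\>)$. Since $S_A$ is a domain and $\del_A^\lambda$ carries $\Z^d$-degree $A\lambda = \beta' - \beta$, the degree set of this quotient is exactly $\N A \setminus (A\lambda + \N A)$.

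I would argue by contradiction: suppose $\beta'$ lies in the quasidegree set. Since $S_A/\<\del_A^\lambda\>$ is a toric module, its quasidegree set is a finite union of translates of $\C$-subspaces spanned by faces of $A$, so there exist a face $\sigma \preceq A$ and an integer vector $\gamma \in \Z^d$ with $\beta' \in \gamma + \C\sigma$ and $\gamma + \C\sigma \subseteq \qdeg(S_A/\<\del_A^\lambda\>)$. By standard toric combinatorics, after replacing $\gamma$ by a suitable representative in the coset $\gamma + (\Z^d \cap \C\sigma)$ if necessary (permissible because each $\Z\sigma$-coset in the saturation $\Z^d \cap \C\sigma$ is Zariski dense in $\C\sigma$), this geometric condition is equivalent to the pair of lattice membership conditions
\[
\gamma \in \N A + \Z\sigma \qquad \text{and} \qquad \gamma \notin A\lambda + \N A + \Z\sigma.
\]

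Setting $\nu := \beta' - \gamma \in \C\sigma$ and using $A\lambda = \beta' - \beta$, I would then translate these into conditions on the $\Z\sigma$-saturated lift $\widetilde{E}_\sigma(\cdot) := \{\nu \in \C\sigma \mid \cdot - \nu \in \N A + \Z\sigma\}$, whose quotient by $\Z\sigma$ is $E_\sigma(\cdot)$. The first asserts $\beta' - \nu = \gamma \in \N A + \Z\sigma$, i.e.\ $\nu \in \widetilde{E}_\sigma(\beta')$, while the second asserts $\beta - \nu = \gamma - A\lambda \notin \N A + \Z\sigma$, i.e.\ $\nu \notin \widetilde{E}_\sigma(\beta)$. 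Since $\widetilde{E}_\sigma(-)$ is saturated under $\Z\sigma$, the hypothesis $E_\sigma(\beta) = E_\sigma(\beta')$ forces the literal equality $\widetilde{E}_\sigma(\beta) = \widetilde{E}_\sigma(\beta')$, giving the desired contradiction. Lemma~\ref{lemma-Nisom} then completes the proof.

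The main obstacle is the combinatorial step in the second paragraph: extracting from the abstract geometric fact $\beta' \in \qdeg(S_A/\<\del_A^\lambda\>)$ the concrete lattice conditions on an integer representative $\gamma$, with attention to the distinction between $\Z\sigma$ and its full saturation $\Z^d \cap \C\sigma$ in $\Z^d$. Everything else is a formal translation between vanishing of Euler--Koszul homology (via Lemma~\ref{lemma-Nisom}), the quasidegrees of $S_A/\<\del_A^\lambda\>$, and the sets $E_\sigma(\beta)$ in the spirit of Observation~\ref{obs-Etau}.
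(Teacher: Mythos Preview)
Your proof is correct and follows essentially the same route as the paper's: both argue by contradiction via Lemma~\ref{lemma-Nisom}, extract from $\beta' \in \qdeg(S_A/\langle\partial_A^{\lambda}\rangle)$ a face $\tau$ together with a witness degree lying in $\N A+\Z\tau$ but outside $A\lambda + \N A + \Z\tau$, and then read off that the corresponding class lies in $E_\tau(\beta')$ but not in $E_\tau(\beta)$. The only difference is bookkeeping---the paper parametrizes the witness as $Av$ for an explicit $v\in\N^n$ (recording the side condition $v_i\le\lambda_i$), whereas you work directly with an abstract integer point $\gamma$ and the two semigroup-membership conditions; your formulation is arguably cleaner, since the step you flag as ``the main obstacle'' is exactly the content hidden in the paper's phrase ``by the definition of quasidegrees.''
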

\begin{proof}
If $\del_A^\lambda$ is not an isomorphism, then
$\beta'\in\qdeg(S_A/\<\del_A^\lambda\>)$ by Lemma~\ref{lemma-Nisom}.
By the definition of quasidegrees, there exist vectors $v\in\N^n$ and 
$\gamma\in\C\tau$ for some face $\tau$ such that $\beta' = Av+\gamma$, 
$\supp(\lambda)\not\subseteq\tau$, and $v_i \leq \lambda_i$ for all $i$. 
Hence $\beta' - \gamma\in\N A$, 
so $\gamma+\Z\tau\in E_\tau(\beta')$.
Further, the condition $v_i\leq \lambda_i$ for all $i$ implies that
$\gamma+\Z\tau\notin E_\tau(\beta)$.
\end{proof}

\begin{notation}
If a vector $\lambda\in\N^n$ is such that the map given by right multiplication 
$\del_A^\lambda:M_A(\beta)\rightarrow M_A(\beta+A\lambda)$ is an 
isomorphism, let $\del_A^{-\lambda}$ denote its inverse.
\end{notation}

\begin{theorem}\label{thm-isom classes}
The $A$-hypergeometric systems $M_A(\beta)$ and $M_A(\beta')$ are 
isomorphic if and only if $E_\tau(\beta) = E_\tau(\beta')$ for all faces $\tau$ of $A$.
\end{theorem}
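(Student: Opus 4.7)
The theorem has two implications, and the forward one is where the Euler--Koszul lemmas just proved (Lemmas~\ref{lemma-Nisom} and~\ref{lemma-NEtau}) apply directly; the reverse will be handled by a complementary argument that distinguishes nonisomorphic systems via their contiguity morphisms.

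For the ``if'' direction, assume $E_\tau(\beta) = E_\tau(\beta')$ for every face $\tau$. Specializing to $\tau = A$ gives $\beta - \beta' \in \Z A = \Z^d$, so I would write $\beta - \beta' = A(\lambda - \lambda')$ with $\lambda, \lambda' \in \N^n$ chosen to have disjoint supports (equivalently, the coordinate-wise minimal such decomposition), and set $\beta'' = \beta + A\lambda' = \beta' + A\lambda$. Because $\beta'' - \beta = A\lambda'$ and $\beta'' - \beta' = A\lambda$ both lie in $\N A$, the containments $E_\tau(\beta) \subseteq E_\tau(\beta'')$ and $E_\tau(\beta') \subseteq E_\tau(\beta'')$ are automatic. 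Once the reverse inclusions are established, Lemma~\ref{lemma-NEtau} supplies isomorphisms $\del_A^{\lambda'} : M_A(\beta) \to M_A(\beta'')$ and $\del_A^\lambda : M_A(\beta') \to M_A(\beta'')$, and the composition $\del_A^{-\lambda} \circ \del_A^{\lambda'}$ realizes the desired $D$-module isomorphism $M_A(\beta) \cong M_A(\beta')$.

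The main obstacle is the reverse inclusion $E_\tau(\beta'') \subseteq E_\tau(\beta)$. Given $\alpha + \Z\tau \in E_\tau(\beta'')$ with $\beta'' - \alpha = Av + z$, $v \in \N^n$, $z \in \Z\tau$, my plan is to use the disjoint-support minimality of $(\lambda, \lambda')$ together with the hypothesis $E_\tau(\beta) = E_\tau(\beta')$ to adjust the pair $(v,z)$ into a presentation $\beta - \alpha = Av' + z'$ with $v' \in \N^n$ and $z' \in \Z\tau$. The adjustment exploits that the lattice $\{w \in \Z^n : Aw \in \Z\tau\}$ contains the standard basis vectors $e_i$ for $a_i \in \tau$, providing free cancellation modulo $\Z\tau$ in those coordinates, so the only genuine constraint arises on coordinates $i$ with $a_i \notin \tau$. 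The case analysis is delicate precisely because for such coordinates one must transfer information between the $\beta$- and $\beta'$-sides using the equality of $E_\tau$, exploiting that the disjointness of $\supp(\lambda)$ and $\supp(\lambda')$ makes the two sides complementary in exactly the needed way.

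For the ``only if'' direction, assume $M_A(\beta) \cong M_A(\beta')$ as $D$-modules. The $\Z^d$-graded structure of the underlying modules, together with the fact that the rank function is governed by ranking lattices (Theorem~\ref{thm-compute jump} combined with Proposition~\ref{prop-zc E}), forces $\beta - \beta' \in \Z A$. For the equality of $E_\tau$-sets I would argue by contrapositive: if some class $\alpha + \Z\tau$ lies in $E_\tau(\beta) \setminus E_\tau(\beta')$, then there exists $\gamma \in \alpha + \Z\tau$ with $\beta - \gamma \in \N A$ while $\beta' - \gamma' \notin \N A$ for every $\gamma' \in \alpha + \Z\tau$. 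The induced contiguity $\del_A^\mu : M_A(\gamma) \to M_A(\beta)$, with $A\mu = \beta - \gamma$, therefore provides a nonzero $D$-linear morphism from $M_A(\gamma)$ into $M_A(\beta)$ with no analogue into $M_A(\beta')$ (the latter would require a contiguity from some $\gamma' \in \alpha + \Z\tau$, which is ruled out by hypothesis). This asymmetry in $\hom_D(M_A(\gamma), -)$ contradicts the assumed isomorphism, so $E_\tau(\beta) = E_\tau(\beta')$ for every $\tau$.
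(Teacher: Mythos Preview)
Your ``only if'' argument has a genuine gap. You claim that if $\alpha+\Z\tau\in E_\tau(\beta)\setminus E_\tau(\beta')$ then $\hom_D(M_A(\gamma),M_A(\beta'))=0$ because any morphism would have to be a contiguity from some $\gamma'\in\alpha+\Z\tau$. But nothing you have proved forces an arbitrary $D$-linear map between hypergeometric systems to be right multiplication by a monomial, let alone one with parameter shift lying in $\alpha+\Z\tau$. Establishing that contiguities exhaust the relevant morphisms is exactly the hard analytic content of Saito's original argument via $b$-functions and the symmetry algebra; the paper does not re-prove this direction at all, but simply cites \cite{saito isom} (noting that the construction of formal solutions there needs no homogeneity hypothesis). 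Your appeal to Theorem~\ref{thm-compute jump} and Proposition~\ref{prop-zc E} for $\beta-\beta'\in\Z A$ is likewise unjustified: those results say rank is constant on ranking slabs, not that isomorphic systems must have integrally related parameters.

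For the ``if'' direction your strategy differs from the paper's and is left incomplete: you set $\beta''=\beta+A\lambda'=\beta'+A\lambda$ and propose to prove $E_\tau(\beta'')\subseteq E_\tau(\beta)$ directly, but you only describe a ``delicate case analysis'' without carrying it out. The paper avoids this entirely. It writes $\beta'-\beta=A\lambda_+-A\lambda_-$ and argues that if $\del_A^{\lambda_+}$ is an isomorphism out of either $M_A(\beta)$ or $M_A(\beta-A\lambda_-)$, then the already-established ``only if'' direction transfers the $E_\tau$-equalities to the intermediate parameter, after which Lemma~\ref{lemma-NEtau} furnishes the second factor of the composite isomorphism. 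If instead $\del_A^{\lambda_+}$ fails on both domains, Lemma~\ref{lemma-Nisom} places both $\beta+A\lambda_+$ and $\beta'$ in $\qdeg(S_A/\langle\del_A^{\lambda_+}\rangle)$, and a short quasidegree argument produces a face $\eta$ with $E_\eta(\beta')\neq\nothing$ but $E_\eta(\beta)=\nothing$, contradicting the hypothesis. The point is that the paper leverages the ``only if'' direction as a tool inside the ``if'' proof, which is what lets it bypass the inclusion you identify as the main obstacle.
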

\begin{proof}
The proof of the ``only-if" direction holds as in \cite{saito isom} without a 
homogeneity assumption on $A$ because it involves 
only the construction of formal solutions of $M_A(\beta)$.

For the ``if" direction, suppose that
$E_\tau(\beta) = E_\tau(\beta')$ for all faces $\tau$ of $A$.
As stated in \cite[Proposition~2.2]{saito isom},
$E_A(\beta)=E_A(\beta')$ implies by definition that $\beta'-\beta\in\Z A$,
so $\beta'-\beta = A\lambda$ for some $\lambda\in\Z^n$.
There are unique vectors $\lambda_+, \lambda_-\in\N^n$ with disjoint support 
such that $\lambda = \lambda_+ - \lambda_-$. 
In light of Lemma~\ref{lemma-NEtau}, we may assume that both 
$\lambda_+$ and $\lambda_-$ are nonzero. 
We claim that at least one of $\del_A^{-\lambda_-}\del_A^{\lambda_+}$ 
or $\del_A^{\lambda_+}\del_A^{-\lambda_-}$ defines an isomorphism 
from $M_A(\beta)$ to $M_A(\beta')$.

If $\del_A^{\lambda_+}:M_A(\beta)\rightarrow M_A(\beta+A\lambda_+)$ or
$\del_A^{\lambda_+}:M_A(\beta-A\lambda_-)\rightarrow M_A(\beta')$ 
defines an isomorphism, then the ``only-if" direction and 
Lemma~\ref{lemma-NEtau} imply that
$\del_A^{-\lambda_-}\del_A^{\lambda_+}$ or
$\del_A^{\lambda_+}\del_A^{-\lambda_-}$, respectively,
give the desired isomorphism.
We are left to consider the case when
$\del_A^{\lambda_+}$ does not define an isomorphism from either domain.
By Lemma~\ref{lemma-Nisom}, this is equivalent to
\begin{eqnarray}\label{eq:two things}
\beta+A\lambda_+ \in\qdeg(S_A/\<\del_A^{\lambda_+}\>) 
\quad \text{\ and\ }\quad 
\beta'  \in\qdeg(S_A/\<\del_A^{\lambda_+}\>).
\end{eqnarray}
From the right side of \eqref{eq:two things}, we see that the nonempty face 
$\eta :=\supp(\lambda_-)$ is such that 
$\beta' +\C \eta\subseteq \qdeg(S_A/\<\del_A^{\lambda_+}\>)$, 
so $E_\eta(\beta')\neq \nothing$. 
However, the shift $A\lambda_+$ in the left side of \eqref{eq:two things} 
implies that $(\beta+\C\eta)\cap (\N A + \Z \eta) = \nothing$. 
Thus $E_\tau(\beta) = \nothing$, which is a contradiction. 
\end{proof}

It is not yet understood how the holomorphic solution
space of $M_A(\beta)$ varies as a function of $\beta$;
different functions of $\beta$ suggest alternative behaviors.
Walther showed in \cite{uli monodromy} that the reducibility of the monodromy
of $M_A(\beta)$ varies with $\beta$ in a lattice-like fashion. 
When the convex hull of $A$ and the origin is a simplex,
Saito used the sets $E_\tau(\beta)$ to construct 
a basis of holomorphic solutions of $M_A(\beta)$ with 
a common domain of convergence \cite{saito log-free}.
Thus Theorem~\ref{thm-compute jump} 
and the complementary relationship in Observation~\ref{obs-Etau} 
between the $E_\tau(\beta)$ and the ranking lattices of $S_A$ at $\beta$ 
suggest that the ranking slabs give the coarsest stratification 
over which there could be a 
constructible sheaf of solutions for the hypergeometric system. 

%%%%%%%%%%%%%%%%%%%%%%%%%%%%%%%%%%%%%%%%%
%%%%%%%%%%%%%%%%%%%%%%%%%%%%%%%%%%%%%%%%%
%%%%%%%%%%%%%%%%%%%%%%%%%%%%%%%%%%%%%%%%%
\raggedbottom
\def\cprime{$'$} \def\cprime{$'$}
\providecommand{\MR}{\relax\ifhmode\unskip\space\fi MR }
\providecommand{\MRhref}[2]{\href{http://www.ams.org/mathscinet-getitem?mr=#1}{#2}}
\providecommand{\href}[2]{#2}
%%%%%%%%%%%%%%%%%%%%%%%%%%%%%%%%%%%%%%%%%
%%%%%%%%%%%%%%%%%%%%%%%%%%%%%%%%%%%%%%%%%
%%%%%%%%%%%%%%%%%%%%%%%%%%%%%%%
%%%%%%%%%%%%%%%%%%%%%%%%%%%%%%%%%%%%%%%%%

%%%%%%%%%%%%%%%%%%%%%%%%%%%%%%%%%%%%%%%%%

\begin{thebibliography}{MMW05}%%%%%%%%%%%%%%%%%%%%%%%%%
%%%%%%%%%%%%%%%%%%%%%%%%%%%%%%%%%%%%%%%%%

\bibitem[Ado94]{Ado}
Alan~Adolphson, 
\emph{Hypergeometric functions and rings generated by monomials}, 
Duke Math. J. {\bf 73} (1994), 269--290.

\bibitem[BvS95]{BvS95}
Victor~V.~Batyrev and Duco~van~Straten, 
\emph{Generalized hypergeometric functions and rational curves on Calabi--Yau complete intersections in toric varieties}, (English summary) 
Comm. Math. Phys. \textbf{168} (1995), no.~3, 493--533. 

\bibitem[BH93]{bruns-herzog}
Winfried~Bruns and J\"urgen Herzog, 
\emph{Cohen-Macaulay Rings}, Cambridge Studies in Advanced Mathematics, 39, Cambridge University Press, Cambridge, 1993.

\bibitem[CDD99]{CDD}
Eduardo~Cattani, Carlos~D'Andrea, and Alicia~Dickenstein, 
\emph{The $A$-hypergeometric system associated with a monomial curve}, Duke Math. J. \textbf{99} (1999), no.~2, 179--207.

\bibitem[DMM06]{DMM}
Alicia~Dickenstein, Laura~Felicia~Matusevich, and Ezra~Miller, 
\emph{Binomial $D$-modules}, Duke Math. J. 
\textbf{151} no.~3 (2010), 385--429. 

\bibitem[GGZ87]{GGZ}
I.~M.~Gel{\cprime}fand, M.~I.~Graev, and A.~V.~Zelevinski\u{\i}, 
\emph{Holonomic systems of equations and series of hypergeometric type}, Dokl. Akad. Nauk SSSR \textbf{295} (1987), no.~1, 14--19.

\bibitem[GKZ89]{GKZ}
I.~M.~Gel{\cprime}fand, A.~V.~Zelevinski\u{\i}, and M.~M.~Kapranov, \emph{Hypergeometric functions and toric varieties}, Funktsional. Anal. i Prilozhen. \textbf{23} (1989), no.~2, 12--26. Correction in ibid, \textbf{27} (1993), no.~4,~91.

\bibitem[Hoc72]{Hochster}
M.~Hochster, 
\emph{Rings of invariants of tori, Cohen--Macaulay rings generated by monomials, and polytopes}, 
Ann. of Math. (2) \textbf{96} (1972), 318--337.

\bibitem[HLY96]{HLY96}
S.~Hosono, B.~H.~Lian, and S.~T.~Yau, 
\emph{GKZ-generalized hypergeometric systems in mirror symmetry of Calabi--Yau hypersurfaces}, 
Comm. Math. Phys. \textbf{182} (1996), no.~3, 535--577.

\bibitem[Kas83]{Kashiwara}
Masaki~Kashiwara, 
\emph{Systems of {M}icrodifferential {E}quations},
Birkh\"auser Boston, Inc., Boston, 1983.

\bibitem[M2]{M2}
Daniel~R.~Grayson and Michael~E.~Stillman, 
\emph{Macaulay 2, a software system for research in algebraic geometry}, available at {\tt http://www.math.uiuc.edu/Macaulay2/}.

\bibitem[JM08]{jm cellular}
Shin-Yao~Jow and Ezra~Miller, 
\emph{Multiplier ideals of sums via cellular resolutions}, Math. Res. Lett. \textbf{15} (2008), no.~2, 359--373.

\bibitem[MM05]{MM05}
Laura~Felicia~Matusevich and Ezra~Miller, 
\emph{Combinatorics of rank jumps in simplicial hypergeometric systems}, Proc. of the Amer. Math. Soc. \textbf{134} (2005), no.~5, 1375--1381.

\bibitem[MMW05]{MMW}
Laura~Felicia~Matusevich, Ezra~Miller, and Uli~Walther, 
\emph{Homological methods for hypergeometric families}, 
J. Amer. Math. Soc. \textbf{18} (2005), no.~4, 919--941.

\bibitem[Mil02]{cm quotients}
Ezra~Miller, 
\emph{Cohen-Macaulay quotients of normal semigroup rings via irreducible resolutions}, Math. Res. Lett. \textbf{9} (2002), no.~1, 117--128.

\bibitem[MS05]{MS05}
Ezra~Miller and Bernd~Sturmfels, 
\emph{Combinatorial commutative algebra}, Graduate Texts in Mathematics, \textbf{227}, Springer-Verlag, New York, 2005. 

\bibitem[Mil08]{miller-survey}
Ezra~Miller, 
\emph{Topological Cohen--Macaulay criteria for monomial ideals}, 
in Combinatorial aspects of commutative algebra, Contemp. Math., 
edited by Viviana~Ene and Ezra~Miller, \textbf{502} (2009), 137--156. 

\bibitem[Oko02]{Oko02}
Andrei~Okounkov, 
\emph{Generating functions for intersection numbers on moduli spaces of curves}, Int. Math. Res. Not. 2002, no.~18, 933--957.

\bibitem[Oku06]{okuyama} Go Okuyama, 
\emph{$A$-hypergeometric ranks for toric threefolds}, Int. Math. Res. Not. 2006, Article ID 70814, 38.

\bibitem[Sai01]{saito isom}
Mutsumi~Saito, \emph{Isomorphism classes of $A$-hypergeometric systems}, Compositio Math. \textbf{128} (2001), no.~3, 323--338.

\bibitem[Sai02]{saito log-free}
Mutsumi~Saito, \emph{Logarithm-free A-hypergeometric series}, Duke Math. J. \textbf{115} (2002), no.~1, 53--73.

\bibitem[SST00]{SST}
Mutsumi~Saito, Bernd~Sturmfels, and Nobuki~Takayama, 
\emph{Gr\"obner {D}eformations of {H}ypergeometric {D}ifferential {E}quations}, Springer-Verlag, Berlin, 2000.

\bibitem[ST01]{ST diffl algs}
Mutsumi~Saito and William~N.~Traves, 
\emph{Differential algebras on semigroup algebras}, 
\emph{Symbolic computation: solving equations in algebra, geometry, and engineering} (South Hadley, MA, 2000), 207--226, Contemp. Math., \textbf{286}, Amer. Math. Soc., Providence, RI, 2001.

\bibitem[SW07]{EKDI}
Mathias~Schulze and Uli~Walther, 
\emph{Hypergeometric $D$-modules and twisted Gau{\ss}--Manin systems}, 
J. Algebra \textbf{322} (2009),  no. 9, 3392--3409. 

\bibitem[Stu96]{sturmfels-solving}
Bernd~Sturmfels, 
\emph{Solving algebraic equations in terms of $A$-hypergeometric series} (English summary), Formal power series and algebraic combinatorics (Minneapolis, MN, 1996), Discrete Math. \textbf{210} (2000), no.~1-3, 171--181. 

\bibitem[ST98]{0134}
Bernd~Sturmfels and Nobuki~Takayama, 
\emph{Gr\"obner bases and hypergeometric functions}, Gr\"obner bases and applications (Linz, 1998), 246--258, London Math. Soc. Lecture Note Ser., \textbf{251}, Cambridge Univ. Press, Cambridge, 1998.

\bibitem[Wal07]{uli monodromy}
Uli~Walther, 
\emph{Duality and monodromy reducibility of $A$-hypergeometric systems},
Mathe\-matische Annalen, \textbf{338} (2007), 55--74.

%%%%%%%%%%%%%%%%%%%%%%%%%%%%%%%%%%%%%%%%%
\end{thebibliography}
\end{document}